\newcommand{\blue}[1]{{\color{blue}#1}}
\let\C\undefined
\newcommand{\C}{\mathbb{C}}
\newcommand{\inv}{^{-1}}
\newcommand{\mbc}{\mathbb{C}}
\newcommand{\mbe}{\mathbb{E}}
\newcommand{\mbr}{\mathbb{R}}
\newcommand{\mca}{\mathcal{A}}
\newcommand{\mcb}{\mathcal{B}}
\newcommand{\mce}{\mathcal{E}}
\newcommand{\mcr}{\mathcal{R}}
\newcommand{\mfpa}{\mathfrak{a}}
\newcommand{\mmd}{\mathrm{d}}
\newcommand{\mme}{\mathrm{e}}
\newcommand{\mmi}{\mathrm{i}}
\newcommand{\ol}{\overline}
\newcommand{\fot}{\frac{1}{2}}
\newcommand{\fq}{\frac{1}{q}}
\DeclareMathOperator{\IM}{Im}
\DeclareMathOperator{\RE}{Re}
\DeclareMathOperator{\csch}{csch}
\newcommand*\pFq[6][8]{%

}
\theoremstyle{plain}
\newtheorem{thm}{Theorem}[section]
\newtheorem{lem}[thm]{Lemma}
\newtheorem{prop}[thm]{Proposition}
\newtheorem{cor}[thm]{Corollary}
\theoremstyle{definition}
\theoremstyle{remark}
\title{Distribution of Dirichlet $L$-functions}
\author{Zikang Dong}
\author{Weijia Wang}
\author{Hao Zhang}
\address[Zikang Dong]{School of Mathematical Sciences, Tongji University, Shanghai 200092, P. R. China}
\address[Weijia Wang]{Yanqi Lake Beijing Institute of Mathematical Sciences and Applications $\&$ Yau Mathematical Sciences Center, Tsinghua University, Beijing 101408, P. R. China}
\address[Hao Zhang]{School of Mathematics, Hunan University, Changsha 410082, P. R. China}
\email{zikangdong@gmail.com}
\email{weijiawang@tsinghua.edu.cn}
\email{zhanghaomath@hnu.edu.cn}
\begin{document}
	
	\maketitle
	\section*{Abstract}
	In this article, we study the distribution of values of Dirichlet $L$-functions, the distribution of values of the random models for Dirichlet $L$-functions, and the discrepancy between these two kinds of distributions. For each question, we consider the cases of $\frac12<\RE s<1$ and $\RE s=1$ separately.
	\bigskip
	\section{\blue{Introduction}}
	The analytic theory of $L$-functions is a central part of modern number theory. The study of distribution of values of $L$-functions is an important topic in the analytic theory of $L$-functions. In \cite{BohrJessen}, Bohr and Jessen introduced a probability treatment to study the distribution of values of the Riemann zeta function. They proved that $\log \zeta(\sigma+\mmi t)$ has a continuous limiting distribution for any $\sigma>\frac12$.  On the critical line, we have Selberg's central limit theorem. On the 1-line, Granville and Soundararajan \cite{GS06} studied the distribution of $|\zeta(1+\mmi t)|$, which is asymptotically a double exponentially decreasing function. Their method was also adjusted to apply to the distribution of values on the 1-line of other $L$-functions. In 2003, they \cite{GS03} studied the distribution of the Dirichlet $L$-functions 
	of quadratic characters $L(1,\chi_d)$, which proves part of Montgomery and Vaughan's conjecture in \cite{MV199}. In 2007, Wu \cite{Wu2007} improved this result by giving a high order expansion in the exponent of the distribution function. 
	In 2008, Liu, Royer and Wu \cite{LRW2008} studied the distribution of a kind of symmetric power $L$-functions. 
	In 2010, Lamzouri \cite{La2010} studied a generalized $L$-function 
	which can cover the results of \cite{GS03, LRW2008}. In the critical strip $\frac12<\RE( s)<1$, Lamzouri \cite{La2011} in 2011 studied the distribution of $\log|\zeta(\sigma+\mmi t)|$ with any fixed $\frac12<\sigma<1$ and also got the asymptotic distribution function. 
	In 2019, Lamzouri, Lester and Radziwi\l\l \   \cite{LLR19} studied the discrepancy between the  distribution of $\log\zeta(\sigma+\mmi t)$ and that of their random models. Later in 2021, Xiao and Zhai \cite{XZ21} generalized this result to automorphic $L$-functions.
	
	For each prime $p$, $X(p)$ denotes the independent random variable uniformly distributed on the unit circle. Then the product
	\[L(\sigma,X):=\prod_p\bigg(1-\frac{X(p)}{p^\sigma}\bigg)^{-1}\]
	converges almost surely for any $\sigma>\frac12$. This random $L$-function turns out to be a very good model for the Riemann zeta function $\zeta(\sigma+{\rm i}t)$, where $t\in[T,2T]$ as $T\to\infty$.
	
	Now we turn our attention to Dirichlet $L$-functions. Let $q$ be a large prime number, and $\chi({\rm mod\;}q)$ be any character modulo $q$. As $\chi$ varies modulo $q$, the values of $L(\sigma,\chi)$ behavior similarly to $\zeta(\sigma+{\rm i}t)$. So the random $L$-function $L(\sigma,X)$ is a good model for $L(\sigma,\chi)$ as well. For every real positive number $\tau$ and any fixed $\frac12<\sigma<1$, we define the distribution functions  separately by:
	$$\Phi_q(\tau)=\Phi_q(\sigma,\tau):=\frac{1}{q}\#\{\chi({\rm mod}\; q):\log|L(\sigma,\chi)|>\tau\},$$
	and 
	$$\Psi(\tau)=\Psi(\sigma,\tau):={\rm Prob}(\log|L(\sigma,X)|>\tau).$$
	In 2011, Lamzouri \cite{La2011} showed that there is a constant ${\mathfrak a}_0$ such that 
	\[\Phi(\tau)=\exp\bigg(-(\tau\log^{\sigma}\tau)^{\frac{1}{1-\sigma}}
	\bigg\{{\mathfrak a}_0
	+ O\bigg(\frac1{\sqrt{\log\tau}} 
	+ \bigg(\frac{(\tau\log\tau)^{\frac{1}{1-\sigma}}}{\log q}\bigg)^{\sigma-\frac{1}{2}}\bigg)\bigg\}\bigg),\]
	holds for $1\ll\tau<b(\sigma)(\log q)^{1-\sigma}(\log_2q)^{-1} $ with some constant $b(\sigma)$.
	
	For $\sigma=1$, we define the distribution functions slightly differently:
	$$\Phi_{1,q}(\tau):=\frac{1}{q}\#\{\chi({\rm mod}\; q):|L(1,\chi)|>{\rm e}^\gamma\tau\},$$
	and 
	$$\Psi_1(\tau):={\rm Prob}(|L(1,X)|>{\rm e}^\gamma\tau).$$
	In 2006, Granville and Soundararajan \cite{GS06} showed that uniformly for $1\ll\tau<\log_2q-20$,
	\[\Phi_{1,q}(\tau)=\exp\bigg(\!-\frac{{\rm e}^{\tau-A_0-1}}{\tau}
	\bigg\{1
	+ O\bigg(\frac{1}{\sqrt\tau}+\sqrt{\frac{{\rm e}^{\tau}}{\log q}}\bigg)\bigg\}\bigg).\]
	Firstly, we give  asymptotic formulae  of $\Psi(\tau)$ and $\Psi_1(\tau)$ with a high order expansion in the exponent, which improve \cite[Corollary 7.7]{LLR19}.
	\begin{thm}\label{thm1.1}
		Let $1/2<\sigma<1$ be fixed. Then for any integer $N\ge1$ there exist computable polynomials $\mfpa_0(\cdot),\dots,\mfpa_N(\cdot)$ with $\deg \mfpa_i\leq i$ which depend only on $\sigma$ and $N$, such that for any $\tau\geq 2$, we have
		$$\Psi(\tau)=\exp\left(-(\tau\log^{\sigma}\tau)^{\frac{1}{1-\sigma}}\left(\sum_{n=0}^{N}\frac{\mathfrak{a}_n(\log_2\tau)}{(\log\tau)^n}+ O\left(\frac{\log_2\tau}{\log\tau}\right)^{\!N+1}\right)\right),$$
		with $\mfpa_0>0$.
		When $\sigma=1$,	there is a sequence of real numbers $\{\mathfrak{b}_n\}_{n\ge 1}$ such that for any integer $N\ge 1$ we have
		\begin{equation*}
			\Psi_1(\tau)=\exp\bigg(\!-\frac{{\rm e}^{\tau-A_0-1}}{\tau}
			\bigg\{1+\sum_{n=1}^N \frac{\mathfrak{b}_n}{\tau^n}
			+ O_N\bigg(\frac{1}{\tau^{N+1}}\bigg)\bigg\}\bigg)
		\end{equation*}
		uniformly for  $  \tau\ge2$,
		where $A_0=A_0^{(0)}$ is defined in \eqref{defofA_n}.
	\end{thm}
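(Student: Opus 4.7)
The plan is to analyse the moment generating function
\[
M(s)=M_\sigma(s):=\mathbb{E}\bigl[|L(\sigma,X)|^{s}\bigr]
=\prod_{p}\mathbb{E}\!\left[\Bigl|1-\tfrac{X(p)}{p^\sigma}\Bigr|^{-s}\right]
\]
for real $s>0$ and then recover $\Psi(\tau)$ from the Mellin--Laplace representation
\[
\Psi(\tau)=\frac{1}{2\pi i}\int_{c-i\infty}^{c+i\infty}M(s)\,e^{-s\tau}\,\frac{ds}{s}
\]
by a saddle-point analysis. This reduces the theorem to (i) producing a full asymptotic expansion of $\log M(s)$ as $s\to\infty$ with explicit coefficients, and (ii) inverting the saddle-point relation to express everything in terms of $\tau$.

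For $1/2<\sigma<1$, I would split the Euler product at a threshold $Y=Y(s)$ and for primes $p>Y$ expand $|1-X(p)/p^\sigma|^{-s}$ in powers of $s/p^{2\sigma}$; for $p\le Y$, I would use the Laplace method on the single-variable angular integral, where $|1-e^{i\theta}/p^\sigma|^{-s}$ concentrates near $\theta=0$. Choosing $Y$ so that the two contributions balance (roughly $Y\asymp (s/\log s)^{1/\sigma}$), and carrying the expansion to $N{+}1$ terms, produces an identity of the shape
\[
\log M(s)=\mathfrak{c}_\sigma\,\frac{s^{1/\sigma}}{(\log s)^{\sigma/(1-\sigma)}}
\left(1+\sum_{n=1}^{N}\frac{P_n(\log_2 s)}{(\log s)^n}
+O\!\left(\frac{\log_2 s}{\log s}\right)^{\!N+1}\right),
\]
in which the $P_n$ are polynomials of degree $\le n$ arising naturally from iterated Taylor expansions in $1/\log s$. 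The saddle point $s_0=s_0(\tau)$ defined by $(\log M)'(s_0)=\tau$ inverts to $s_0\sim(\tau\log^\sigma\tau)^{\sigma/(1-\sigma)}\bigl(1+O(\log_2\tau/\log\tau)\bigr)$; plugging back into $F_\sigma(s_0)-s_0\tau$ and incorporating the Gaussian prefactor $1/(s_0\sqrt{2\pi(\log M)''(s_0)})$ from the saddle point yields the stated expansion with polynomial coefficients $\mathfrak{a}_n(\log_2\tau)$, once one reorganises the composition $P_n\circ(\log_2 s_0)$ as polynomials in $\log_2\tau$.

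For $\sigma=1$ the mechanism is analogous but genuinely simpler at the level of the expansion of $\log M(s)$: writing each local factor as $1+s^2/(4p^2)+O(s^3/p^3)$ for $p\gg s$ and using Mertens-type sums for $p\le s$, one obtains
\[
\log M(s)=s\Bigl(\log_2 s+A_0+\sum_{n\ge 1}\frac{A_n}{(\log s)^n}\Bigr)
+O\!\bigl(\text{lower order}\bigr),
\]
with the $A_n=A_n^{(0)}$ of \eqref{defofA_n}. The saddle-point equation $\log_2 s_0+1+A_0+O(1/\log s_0)=\tau$ is solved iteratively, giving $\log s_0=\tau-A_0-1-\log\tau+o(1)$, hence $s_0=e^{\tau-A_0-1}/\tau\cdot(1+\sum_n \tilde b_n/\tau^n)$ with explicit $\tilde b_n$; substitution into $F_1(s_0)-s_0\tau$ produces the claimed expansion with coefficients $\mathfrak{b}_n$.

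The main obstacle is the bookkeeping of the expansion of $\log M(s)$ to arbitrary order and its compatibility with the iterative inversion $s_0\leftrightarrow\tau$: one must show that the coefficients that appear are genuinely polynomials in $\log_2 s$ (and hence, after inversion, in $\log_2\tau$) rather than more intricate functions, and that the remainder term of size $(\log_2 s/\log s)^{N+1}$ survives the saddle-point substitution without losing precision. This requires organising the prime sums so that every intermediate remainder is uniform in the relevant range, and controlling the contour shift in the inverse Laplace integral over a neighbourhood of $s_0$ of size comparable to $s_0/\sqrt{(\log M)''(s_0)}$, which is where the Gaussian factor is non-negligible. Once these ingredients are in place, collecting terms of like order in $1/\log\tau$ (respectively $1/\tau$) produces the polynomials $\mathfrak{a}_n$ (respectively constants $\mathfrak{b}_n$), and the positivity $\mathfrak{a}_0>0$ follows by inspection from the leading constant $\mathfrak{c}_\sigma$.
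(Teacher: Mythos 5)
Your proposal follows essentially the same route as the paper: compute the asymptotics of $\log M(s)=\log\mathbb{E}(|L(\sigma,X)|^s)$ by splitting the Euler product at a threshold, applying the Laplace/Watson method to the angular integral for small primes and a Taylor expansion for large primes, then recovering $\Psi(\tau)$ via a saddle-point contour integral and an iterated inversion of $M'(\kappa)=\tau$ (respectively $M'(\kappa)=\log\tau+\gamma$ when $\sigma=1$). One small caveat: your preview formula $\log M(s)\asymp s^{1/\sigma}(\log s)^{-\sigma/(1-\sigma)}$ should read $s^{1/\sigma}/\log s$ (and the saddle point is $\kappa\sim g(\sigma)(\tau\log\tau)^{\sigma/(1-\sigma)}$, not $(\tau\log^\sigma\tau)^{\sigma/(1-\sigma)}$); these are harmless slips that a careful computation would self-correct, and the polynomiality of the coefficients under inversion — which you rightly flag as the main bookkeeping obstacle — is exactly what the paper handles by invoking the inversion lemma from Dong's preprint (Lemma~4.3 of \cite{Dong22}).
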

	Now we present the relations between $\Psi(\tau)$ and $\Phi_q(\tau)$, and between $\Psi_1(\tau)$ and $\Phi_{1,q}(\tau)$, which are similar to \cite[Theorem 1.3]{LLR19}
	\begin{thm}\label{thm1.2}
		Let $\frac12<\sigma<1$ be fixed. There exists a positive constant $b(\sigma)$ such that for $3\le\tau \leq b(\sigma)(\log q)^{1-\sigma}(\log_2 q)^{-1}$ we have
		\[\Phi_q(\tau)=\Psi(\tau)\bigg(1+O\bigg(\frac{(\tau\log \tau)^{\frac{\sigma}{1-\sigma}}\log_2 q}{(\log q)^{\sigma}}\bigg)\bigg).\]
		When $\sigma=1$, there exists a constant $b$ such that uniformly for $1\ll\tau\le\log_2q-b$ we have 
		\[\Phi_{1,q}(\tau)=\Psi_1(\tau)\left(1+O\left(\frac{\mme^{\tau}\log_2 q}{\tau\log q}\right)\right).\]
	\end{thm}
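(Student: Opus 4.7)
The plan is to adapt the framework of \cite{LLR19}, originally developed for the continuous average over $t \in [T, 2T]$, to the discrete average over Dirichlet characters modulo a large prime $q$. The essential input is the orthogonality relation $\frac{1}{\phi(q)}\sum_{\chi\,(\mathrm{mod}\,q)}\chi(n) = \mathbf{1}_{n \equiv 1\,(\mathrm{mod}\,q)}$, which plays the same role on the character side as the identity $\mbe[\prod_p X(p)^{a_p}] = \mathbf{1}_{\forall p,\,a_p = 0}$ does on the random side, provided $n < q$. The strategy is to leverage this matching of moments into a matching of the tail distributions $\Phi_q(\tau)$ and $\Psi(\tau)$.

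The first step is to fix a truncation length $y$, a small power of $\log q$ tuned to $\tau$, and to approximate
\[
\log|L(\sigma,\chi)| \approx P_\chi(y) := \RE\sum_{p \le y}\frac{\chi(p)}{p^\sigma} + \tfrac12\RE\sum_{p \le \sqrt{y}}\frac{\chi(p)^2}{p^{2\sigma}},
\]
together with the analogous polynomial $P_X(y)$ on the random side. The exceptional set of characters on which this approximation fails by more than a small $\epsilon$ should be shown to have density $o(1)$, using either zero-density estimates for Dirichlet $L$-functions combined with the convexity bound, or directly via high-moment estimates for the short sums $\sum_{p\le y}\chi(p)/p^\sigma$ in the style of Soundararajan.

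Having made the approximation, I would squeeze the indicator $\mathbf{1}_{P_\chi(y) > \tau}$ between two smooth test functions $F_\pm$ and realise each $F_\pm$ through Laplace inversion. This reduces the problem to comparing
\[
\frac{1}{\phi(q)}\sum_\chi \mme^{sP_\chi(y)} \quad\text{with}\quad \mbe\bigl[\mme^{sP_X(y)}\bigr]
\]
for complex $s$ on a contour of radius comparable to the saddle-point parameter $s_\tau \asymp (\tau/\log\tau)^{\sigma/(1-\sigma)}$ of $\Psi(\tau)$. Expanding the exponential as a Taylor series and exchanging sum and average, each monomial $\prod_p \chi(p)^{a_p}\overline{\chi(p)}^{b_p}$ averages identically to its random-model counterpart so long as $\prod_p p^{a_p+b_p} < q$; this yields moment matching up to total prime-degree roughly $\log q/\log y$. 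Combining with the asymptotic for $\Psi(\tau)$ from Theorem \ref{thm1.1} and optimising $y$ produces the stated error $(\tau\log\tau)^{\sigma/(1-\sigma)}\log_2 q/(\log q)^\sigma$. The case $\sigma = 1$ is handled analogously, with saddle point $s_\tau \asymp \mme^\tau/\tau$ and the range $\tau \le \log_2 q - b$ reflecting precisely the constraint $\prod_p p^{a_p} < q$.

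The principal obstacle is the joint calibration of the three parameters: the truncation length $y$, the smoothing scale of $F_\pm$, and the radius of the Laplace contour. A larger $y$ sharpens the Dirichlet polynomial approximation but shrinks the window of valid moment matching; a finer smoothing brings $F_\pm$ closer to the indicator but pushes the Laplace parameter further from the origin. One must verify that the truncation loss, the smoothing loss, and the eventual moment mismatch all enter the comparison with factors that are absorbed by the rapidly decaying tail of $\Psi(\tau)$ supplied by Theorem \ref{thm1.1}, leaving only the advertised multiplicative error. A secondary technical point is that the approximation $\log|L(\sigma,\chi)| \approx P_\chi(y)$ must be valid for almost all $\chi$ up to $\tau$ roughly $(\log q)^{1-\sigma}/\log_2 q$, which is essentially sharp in this method and accounts for the quantitative shape of $b(\sigma)$.
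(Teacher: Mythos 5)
Your proposal follows essentially the same strategy as the paper: approximate $\log|L(\sigma,\chi)|$ by a short Dirichlet polynomial $R_y(\sigma,\chi)$ outside a small exceptional set (controlled via zero-density estimates and high moments of $\sum_{p\le y}\chi(p)p^{-\sigma}$), match moments with the random model via character orthogonality up to degree $\asymp \log q/\log y$, invert through a smoothed Perron/Laplace formula on a contour through the saddle point, and invoke the asymptotic for $\Psi(\tau)$ from Theorem~\ref{thm1.1} to absorb the smoothing loss. The paper packages the moment comparison as Proposition~\ref{prop4.1} and the two-sided test functions as the explicit Mellin kernels $\bigl(\tfrac{\mme^{\lambda s}-1}{\lambda s}\bigr)^N$ in Lemma~\ref{SmoothPerron}, but the substance of your outline matches.
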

	
	At last, we study the discrepancy  between the distribution of Dirichlet $L$-functions and that of their random models. Let ${\mathcal R}$ be any rectangle with sides parallel to the coordinate axis. Let
	$$\Phi_q({\mathcal R}):=\frac{1}{q}\#\{\chi({\rm mod}\; q):\log L(\sigma,\chi)\in{\mathcal R}\},$$
	and 
	$$\Psi({\mathcal R}):={\rm Prob}\left(\log L(\sigma,X)\in {\mathcal R}\right).$$
	The discrepancy between the above two probabilities is defined by
	$$D_\sigma( q):=\sup_{\mathcal R}|\Phi_q({\mathcal R})-\Psi({\mathcal R})|,$$
	where $\mcr$ runs through all the rectangles with sides parallel to the coordinate axis. We have the following theorem.
	\begin{thm}\label{thm1.3}
		Let $\frac12<\sigma<1$ be fixed and $q$ be a large prime number. Then we have
		\[D_\sigma(q)\ll\frac{1}{(\log q)^\sigma}.\]
		When $\sigma=1$, we have
		\[D_1(q)\ll\frac{(\log_2q)^2}{\log q}.\]
	\end{thm}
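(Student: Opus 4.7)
The plan is to follow the Beurling--Selberg framework of Lamzouri--Lester--Radziwi\l\l, adapted to the Dirichlet setting where averages over characters modulo $q$ replace $t$-averages, and to use the asymptotics already established in Theorem \ref{thm1.2} to gain size information outside a bounded region. Since $\log L(\sigma,\chi), \log L(\sigma,X) \in \mathbb{C}$, I view the discrepancy as a two-dimensional problem on $\mathbb{R}^2$.

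First I would fix a rectangle $\mcr$ with sides parallel to the axes, and construct two-dimensional Beurling--Selberg majorants and minorants $F^{\pm}_{\mcr,T}$ with $F^{-} \le \mathbf{1}_{\mcr} \le F^{+}$, with Fourier transforms supported in $[-T,T]^2$, and satisfying $\iint_{\mbr^2}(F^{+}-F^{-}) \ll 1/T$. Fourier inversion and Parseval then give
\[
\Phi_q(\mcr)-\Psi(\mcr) \ll \frac{1}{T}+\iint_{[-T,T]^2}|\widehat{F}^{\pm}(u,v)|\,\bigl|\widehat{\Phi}_q(u,v)-\widehat{\Psi}(u,v)\bigr|\,\mmd u\,\mmd v,
\]
where $\widehat{\Phi}_q$ and $\widehat{\Psi}$ are the characteristic functions of $\log L(\sigma,\chi)$ (averaged over primitive $\chi\bmod q$) and $\log L(\sigma,X)$ respectively. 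Before this, it is standard to first truncate to rectangles $\mcr \subset [-R,R]^2$ with $R$ a suitable multiple of $\log_2 q$, since Theorem~\ref{thm1.2} together with the tail bounds in Theorem~\ref{thm1.1} shows that both $\Phi_q$ and $\Psi$ assign negligible mass outside such a box.

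The heart of the argument is to bound $|\widehat\Phi_q(u,v) - \widehat\Psi(u,v)|$ on $[-T,T]^2$. I would approximate $\log L(\sigma,\chi)$ by the truncated prime sum $P_Y(\sigma,\chi) := \sum_{p\le Y}\chi(p)p^{-\sigma} + \tfrac12\sum_{p\le Y}\chi(p)^2 p^{-2\sigma}$ (valid for most $\chi$, with quantitative tail control from the zero-density/mean-value technology used in the proof of Theorem~\ref{thm1.2}), and similarly replace $\log L(\sigma,X)$ by $P_Y(\sigma,X)$. Expanding $\exp(\mmi u \RE P_Y + \mmi v \IM P_Y)$ in its Taylor series, the resulting moments are linear combinations of monomials $\prod_{p\le Y}\chi(p)^{a_p}\bar\chi(p)^{b_p} p^{-\sigma(a_p+b_p)}$. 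Orthogonality of characters mod $q$ ensures that the $\chi$-average equals the $X$-average whenever $\prod_{p\le Y}p^{a_p-b_p} < q$; the remaining ``off-diagonal'' contributions are estimated trivially. One then chooses $Y$ and the Taylor cutoff so that the characteristic functions agree up to an error power-saving in $q$ on $[-T,T]^2$. Balancing $1/T$ against the characteristic-function error leads to $T \asymp (\log q)^{\sigma}$ for $\tfrac12 < \sigma <1$, yielding the claimed $(\log q)^{-\sigma}$ bound.

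The main obstacle is the choice and interplay of parameters $(T,Y)$ together with the uniform moment comparison on $[-T,T]^2$: the characteristic functions are not uniformly small, so truncating to $|u|,|v|\le T$ genuinely relies on the Beurling--Selberg spectral support. For $\sigma=1$, two additional difficulties arise: the prime sum has sub-exponential rather than sub-Gaussian tails (forcing $T$ to be smaller, of order roughly $\tau \asymp \log_2 q$ after matching the effective range from Theorem~\ref{thm1.2}); and the moment matching breaks down at a shorter length $Y$ because the $2$-line contribution $\sum_{p}\chi(p)^2/p^{2}$ is of size $O(1)$ rather than vanishing in the limit. These two effects together produce the extra $(\log_2 q)^2$ factor in the bound for $D_1(q)$.
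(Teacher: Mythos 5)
Your overall approach mirrors the paper's: reduce to rectangles in $[-\log_2q,\log_2q]^2$ via the tail bounds, use a two-dimensional Beurling--Selberg device with spectrum cut off at $T$, convert the discrepancy into a comparison of characteristic functions of $\log L(\sigma,\chi)$ and $\log L(\sigma,X)$ on $[-T,T]^2$, prove that comparison by orthogonality of characters after truncating the Euler product, and then balance $1/T$ against the characteristic-function error. For $\tfrac12<\sigma<1$ you correctly arrive at $T\asymp(\log q)^\sigma$, which is exactly what Lemma~\ref{thm:lchix} furnishes, and the conclusion $D_\sigma(q)\ll (\log q)^{-\sigma}$ follows just as in the paper.

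For $\sigma=1$, however, there is a concrete error in the parameter identification. You assert that $T$ is ``forced to be smaller, of order roughly $\tau\asymp\log_2 q$,'' conflating the truncation radius $R\asymp\log_2 q$ of the box (which comes from the range $\tau\le\log_2q-b$ in Theorem~\ref{thm1.2}) with the Fourier cutoff $T$. These are independent parameters: $R$ governs which rectangles need be considered, while $T$ is the admissible range for the moment/characteristic-function comparison. With $T\asymp\log_2q$ you would obtain $D_1(q)\ll 1/T\asymp 1/\log_2q$, which is far weaker than the stated bound. The correct choice, dictated by the admissible range of the $\sigma=1$ character-moment comparison (Lemma~\ref{lem:laml1}, i.e.\ Lamzouri's Theorem 9.2, which holds for $|z_1|,|z_2|\le \log q/(50(\log_2q)^2)$), is $T\asymp \log q/(\log_2 q)^2$. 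The error in the Selberg approximation from the box truncation is then $\ll (\log_2q)^2T^2/(\log q)^A$, negligible after a suitable choice of $A$, and the dominant error is $1/T\asymp(\log_2q)^2/\log q$, which is exactly the claimed bound. So the $(\log_2q)^2$ factor does not arise in the way you describe; it comes from the more restricted admissible range of the characteristic-function matching on the $1$-line, not from shrinking $T$ down to $\log_2q$.
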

	
	%

	This article is organized as follows.  In \S\ref{chap2}, we introduce some preliminary lemmas. In \S\ref{chap3}, we study the distribution of random models and prove Theorem \ref{thm1.1}. In \S\ref{chap4}, we study large deviations between the distribution of Dirichlet $L$-functions and that of their random models. We prove Theorem \ref{thm1.2}. In \S\ref{chap5}, we study the discrepancy bound for Dirichlet $L$-functions and prove Theorem \ref{thm1.3}.
	\section{\blue{Preliminary Lemmas}}\label{chap2}
	Firstly we state a lemma which  approximates Dirichlet $L$-functions by their truncating sums.
	\begin{lem}\label{l6}   Let $s=\sigma+{\rm i}t$ with $|t|\le 3q$, and let $y\ge 2$ be a real number. Let $1/2\leq\sigma_0<\sigma$, and suppose that the rectangle $\{z:\sigma_0<\RE z\le 1,|\IM z-t|\le y+3\}$ contains no zeros of $L(z,\chi)$. Then
		$$\log |L(s,\chi)|\ll \frac{\log q}{\sigma-\sigma_0}.$$
		Further, now putting $\sigma_1={\rm min}\left(\sigma_0+\frac{1}{\log y},\frac{\sigma+\sigma_0}{2}\right)$, we have
		$$\log L(\sigma+{\rm i}t,\chi)=\sum_{n=2}^y \frac{\Lambda(n)\chi(n)}{n^{\sigma+{\rm i}t}\log n}+O\bigg(\frac{\log  q}{(\sigma_1-\sigma_0)^2}y^{\sigma_1-\sigma}\bigg).$$ 
	\end{lem}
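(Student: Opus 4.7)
My plan has two stages. First, I would prove the crude upper bound on $\log|L(s,\chi)|$ by the Borel--Carath\'eodory theorem. The function $f(z):=\log L(z,\chi)$ is holomorphic on a neighborhood of the closed disk centered at $z_0=2+it$ of radius $R=2-\sigma_0$, since this disk lies in $\{\RE z>\sigma_0,\,|\IM z-t|\le y+3\}\cup\{\RE z>1\}$ (using $R\le 3/2\le y+3$) and so contains no zeros of $L(z,\chi)$. On its boundary, the standard convexity bound $|L(\sigma+i\tau,\chi)|\ll(q(|\tau|+1))^{O(1)}$, valid for $\sigma\ge 1/2$ and $|\tau|\le 3q$, gives $\RE f\ll\log q$, while $f(z_0)=O(1)$ by absolute convergence of the Euler product at $\RE z=2$. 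Borel--Carath\'eodory applied at inner radius $r=2-\sigma$ then yields
$$|\log L(s,\chi)|\ll\frac{R}{R-r}\log q\ll\frac{\log q}{\sigma-\sigma_0},$$
which is the first assertion.

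For the approximation formula, I would apply (truncated) Perron's formula to the Dirichlet series $\log L(z,\chi)=\sum_{n\ge 2}\Lambda(n)\chi(n)/(n^z\log n)$, which converges absolutely for $\RE z>1$. With $c=1-\sigma+1/\log y$ and a height $T\asymp y$, Perron gives
$$\sum_{n\le y}\frac{\Lambda(n)\chi(n)}{n^{\sigma+it}\log n}=\frac{1}{2\pi i}\int_{c-iT}^{c+iT}\log L(s+w,\chi)\,\frac{y^w}{w}\,dw+E(T,y),$$
where $E(T,y)$ is the standard Perron tail. Shifting the contour leftward to the vertical line $\RE w=\sigma_1-\sigma<0$ picks up the simple pole at $w=0$, whose residue is precisely $\log L(s,\chi)$; this yields the desired identity modulo the shifted vertical integral, two horizontal connecting segments, and $E(T,y)$.

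On the shifted line $\RE(s+w)=\sigma_1$, and for $|\IM w|\le T\le y$ one also has $|\IM(s+w)-t|\le y+3$, so the first assertion applied with $\sigma_1$ in place of $\sigma$ gives $|\log L(s+w,\chi)|\ll\log q/(\sigma_1-\sigma_0)$. Integrating this bound against $|y^w/w|=y^{\sigma_1-\sigma}/|w|$ contributes
$$\ll\frac{\log q}{\sigma_1-\sigma_0}\,y^{\sigma_1-\sigma}\log\bigl(T/(\sigma-\sigma_1)\bigr)\ll\frac{\log q}{(\sigma_1-\sigma_0)^2}\,y^{\sigma_1-\sigma},$$
since $\log T\asymp\log y\ge 1/(\sigma_1-\sigma_0)$ by the definition of $\sigma_1$. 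The horizontal connectors and the Perron tail $E(T,y)$ are handled by routine estimates and rendered negligible by the choice $T\asymp y$. The main technical obstacle is the simultaneous balance of $T$ and $\sigma_1$: $T$ must be small enough for the shifted contour to stay inside the zero-free rectangle $|\IM z-t|\le y+3$, yet large enough for the Perron tail $E(T,y)$ to be negligible; meanwhile $\sigma_1-\sigma_0$ must be small enough that $y^{\sigma_1-\sigma}$ provides decay, but not so small that the $1/(\sigma_1-\sigma_0)^2$ penalty overwhelms it. The symmetric choice $T\asymp y$ and $\sigma_1-\sigma_0\asymp 1/\log y$ resolves these constraints.
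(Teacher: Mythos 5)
Your approach is the standard one and is, in outline, exactly how this lemma (originally \cite[Lemma 2.1]{GS03}, which the paper cites rather than reproves) is established: a Borel--Carath\'eodory estimate on a disk centered at $2+\mmi t$ supplies the crude bound on $\log|L|$ uniformly in $\RE z>\sigma_0$, and then truncated Perron applied to the Dirichlet series of $\log L$, with the contour shifted left to $\RE(s+w)=\sigma_1$ and the residue at $w=0$ picking out $\log L(s,\chi)$, gives the approximation. Your geometric checks (the disk fits in the zero-free rectangle because $R=2-\sigma_0\le 3/2\le y+3$; the shifted contour stays inside it when $T\asymp y$) are the right points to verify.

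One sentence is stated backwards: you write ``$\log T\asymp\log y\ge 1/(\sigma_1-\sigma_0)$,'' but the definition $\sigma_1-\sigma_0=\min(1/\log y,(\sigma-\sigma_0)/2)$ forces $\sigma_1-\sigma_0\le 1/\log y$, hence $\log y\le 1/(\sigma_1-\sigma_0)$, not $\ge$. The bound you actually need, $\log(T/(\sigma-\sigma_1))\ll 1/(\sigma_1-\sigma_0)$, is still true: since $\sigma_1\le(\sigma+\sigma_0)/2$ one has $\sigma-\sigma_1\ge\sigma_1-\sigma_0$, so $\log\frac{1}{\sigma-\sigma_1}\le\log\frac{1}{\sigma_1-\sigma_0}\le\frac{1}{\sigma_1-\sigma_0}$, and separately $\log T\asymp\log y\le 1/(\sigma_1-\sigma_0)$; summing these gives the claim. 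Aside from this sign slip (and the usual caveat that the Perron tail near $n\approx y$ should be controlled by perturbing $y$ by $O(1)$ or using a mildly smoothed kernel, either of which costs only $O(y^{-\sigma})$ and is absorbed by the error term), the argument is sound.
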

	\begin{proof}
		See \cite[Lemma 2.1]{GS03}.
	\end{proof}

	We need the following zero-density estimates for Dirichlet $L$-functions.
	
	\begin{lem}\label{l7} Let $1/2\leq\sigma\leq 1$ and $N(\sigma,T,\chi)$ denote the number of zeros of $L(s,\chi)$ in the region $\RE s\ge\sigma$ and $|\IM s|\le T$. Then we have
		$$\sum\limits_{\chi({\rm mod}\, q)}N(\sigma,T,\chi)\ll(qT)^{{\frac{3-3\sigma}{2-\sigma}}}(\log qT)^{14}.$$
	\end{lem}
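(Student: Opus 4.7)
The plan is to prove this hybrid density estimate by the classical Ingham--Montgomery argument: a mollifier-based zero detector combined with Gallagher's mean value inequality and the hybrid large sieve for character sums. The exponent $(3-3\sigma)/(2-\sigma)$ is exactly Ingham's exponent for $\zeta$, which is what this blueprint produces when the large sieve is substituted for the fourth-moment bound on the line.

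\textbf{Zero detection.} For a parameter $X=(qT)^\theta$ with $\theta$ small, introduce the truncated M\"obius mollifier
\[M_X(s,\chi)=\sum_{n\le X}\mu(n)\chi(n)n^{-s},\]
so that $L(s,\chi)M_X(s,\chi)=1+\sum_{n>X}a_n(X)\chi(n)n^{-s}$ with $|a_n(X)|\le d(n)$. Multiplying by a Gaussian weight $\exp((w-\rho)^2)$ and shifting the contour shows that at any zero $\rho=\beta+\mmi\gamma$ of $L(s,\chi)$ with $\beta\ge\sigma$ and $|\gamma|\le T$ there is a dyadic range $N\in[X,(qT)^C]$ for which
\[\bigg|\sum_{n\sim N}a_n(X)\chi(n)n^{-\rho}\bigg|\gg (\log qT)^{-1},\]
after invoking the approximate functional equation for $L(s,\chi)$ to control the high-frequency tail.

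\textbf{Mean values and hybrid large sieve.} A standard pigeonhole plus box argument extracts a subset of zeros that are mutually $(\log qT)$-well-spaced in $\gamma$, with only logarithmic loss. For a fixed $\chi$, Gallagher's Hilbert inequality then bounds the number of such detected zeros by a time-integral of the square of the Dirichlet polynomial above. Summing over $\chi\pmod q$ and applying the hybrid large sieve
\[\sum_{\chi\pmod q}\int_{-T}^{T}\bigg|\sum_{n\sim N}b_n\chi(n)n^{-\mmi t}\bigg|^{2}dt\ll(qT+N)\sum_n|b_n|^2,\]
with $b_n=a_n(X)n^{-\sigma}$ and $\sum|b_n|^2\ll N^{1-2\sigma}(\log N)^{O(1)}$, yields
\[\sum_\chi N(\sigma,T,\chi)\ll (qT+N)N^{1-2\sigma}(\log qT)^{O(1)}.\]
Balancing the two terms at $N=(qT)^{1/(2-\sigma)}$ gives the exponent $1+\frac{1-2\sigma}{2-\sigma}=\frac{3-3\sigma}{2-\sigma}$.

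The principal obstacle is the zero-detection step: one must show that $L(s,\chi)M_X(s,\chi)-1$ is genuinely large at each zero despite only a short truncation of its Dirichlet series being available. This requires a careful smoothed contour shift and an approximate functional equation with uniform control in both $q$ and $t$. All remaining work is bookkeeping of logarithmic factors; the exponent $14$ in the statement is far from optimal and reflects standard losses from Gallagher's inequality, the mean square $\sum_{n\le X}d(n)^2\ll X(\log X)^{3}$, the dyadic decomposition over $N$, and the shift from the $\sigma$-line to the $\rho$-line.
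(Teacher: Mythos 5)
The paper proves this lemma simply by citing Montgomery's Theorem 12.1 in \emph{Topics in Multiplicative Number Theory} (LNM 227); you have instead tried to reconstruct that theorem from scratch, so your route is necessarily different from the paper's one-line citation. Your outline has the right ingredients (mollifier zero-detector, Gallagher/well-spacing, hybrid large sieve), but there is a genuine gap in the final step that prevents it from reaching the claimed exponent.

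The gap is in the ``balancing'' at $N=(qT)^{1/(2-\sigma)}$. The inequality you display, $\sum_\chi N(\sigma,T,\chi)\ll (qT+N)N^{1-2\sigma}(\log qT)^{O(1)}$, does not hold for a freely-chosen $N$; it holds with $N$ running over the dyadic pieces of the detecting polynomial, whose lengths are forced by the construction to span a full interval from the mollifier length $X$ up to the truncation length $Y$ (and after invoking the approximate functional equation for $L$, $Y$ is at least of size $X\sqrt{qT}$, typically exceeding $qT$). Since $N^{1-2\sigma}$ is decreasing, the dominant dyadic range is the shortest one, $N\approx X$, while the long ranges $N\approx Y>qT$ contribute $\sim Y^{2-2\sigma}$. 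Optimising $X$ against these two genuine competing terms yields an exponent like $(2\sigma+1)(1-\sigma)$, which is strictly worse than Ingham's $3(1-\sigma)/(2-\sigma)$ for $\tfrac12<\sigma<1$. To actually get Ingham's exponent one must separate the zeros into two classes -- those detected by a large value of the truncated Dirichlet polynomial, and those detected by a large value of the contour integral on $\RE w=\tfrac12-\beta$ -- and handle the latter via the hybrid fourth moment $\sum_\chi\int_{-T}^{T}|L(\tfrac12+\mmi t,\chi)|^4\,\mmd t\ll qT(\log qT)^{O(1)}$ (obtained from the hybrid large sieve applied to the AFE of $L^2$), paired with a mean-square bound for the mollifier. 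It is this interplay of the second moment with the fourth moment, not the second moment alone, that produces $3(1-\sigma)/(2-\sigma)$. As written, your sketch would not deliver the stated lemma.
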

	\begin{proof}
		See \cite[Theorem 12.1]{Mon71}.
	\end{proof}
	For any $y>1$ and $1/2<\sigma\leq 1$, let 
	\[R_y(\sigma,\chi):=\sum_{p^n\leq y}\frac{\chi(p)^n}{np^{n\sigma}}.\]
	With the help of Lemma \ref{l6} and Lemma \ref{l7}, we can show that with very few exceptions, the logarithms of the $L$-functions can be approximated by $R_y(\sigma,\chi)$.

	\begin{lem}\label{l8} {\it   Let $q$ be a large prime number and $1/2<\sigma\leq 1$. Let $(\log q)^{A(\sigma)}\le  y\le q^{a(\sigma)}$ be a real number, where $0<a(\sigma)<\frac{4\sigma-2}{7-2\sigma}<\frac{4}{2\sigma-1}<A(\sigma)$ are any constants. Then we have 
			$$\log L(\sigma,\chi)=R_y(\sigma,\chi)+O\Big(y^{{\frac{1-2\sigma}{4}}}(\log y)^2\log q\Big)$$
			for all but at most $q^{\frac{9-6\sigma}{7-2\sigma}}y(\log q)^{14}$ primitive characters $\chi\,({\rm mod}\,q)$.}
	\end{lem}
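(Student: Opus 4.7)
The plan is to derive the approximation by applying Lemma~\ref{l6} at $s=\sigma$ (so $t=0$) with a carefully chosen $\sigma_0$, and then use the zero-density estimate in Lemma~\ref{l7} to control the set of characters for which the required zero-free rectangle fails to exist.

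First I would choose $\sigma_0=\tfrac{2\sigma+1}{4}$, which lies strictly between $\tfrac12$ and $\sigma$. Under the hypothesis $y\ge(\log q)^{A(\sigma)}$ with $A(\sigma)>4/(2\sigma-1)$, one has $1/\log y<(\sigma-\sigma_0)=(2\sigma-1)/4$, so that the min in the definition of $\sigma_1$ is achieved by $\sigma_1=\sigma_0+1/\log y$. Thus $\sigma_1-\sigma=-\tfrac{2\sigma-1}{4}+\tfrac{1}{\log y}$, which gives $y^{\sigma_1-\sigma}\ll y^{(1-2\sigma)/4}$, and $(\sigma_1-\sigma_0)^{-2}=(\log y)^2$. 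Applying Lemma~\ref{l6} at those characters $\chi$ for which the rectangle $\{z:\sigma_0<\RE z\le1,\;|\IM z|\le y+3\}$ is zero-free for $L(z,\chi)$, we get
\[
\log L(\sigma,\chi)=\sum_{n=2}^{y}\frac{\Lambda(n)\chi(n)}{n^{\sigma}\log n}+O\bigl(y^{(1-2\sigma)/4}(\log y)^{2}\log q\bigr).
\]
Writing out $\Lambda(n)$ on prime powers $n=p^k$ ($k\ge1$) shows that the truncated sum is exactly $R_y(\sigma,\chi)$, which supplies the main term.

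Next I would bound the exceptional set. The approximation above is valid whenever $L(z,\chi)$ has no zero with $\RE z>\sigma_0$ and $|\IM z|\le y+3$. By Lemma~\ref{l7} with this $\sigma_0$ and $T=y+3$,
\[
\sum_{\chi\,(\mathrm{mod}\,q)}N(\sigma_0,y+3,\chi)\ll\bigl(q(y+3)\bigr)^{(3-3\sigma_0)/(2-\sigma_0)}\bigl(\log q(y+3)\bigr)^{14}.
\]
A direct computation gives $(3-3\sigma_0)/(2-\sigma_0)=(9-6\sigma)/(7-2\sigma)$, and for $\sigma\ge\tfrac12$ this exponent is at most $1$, so the right-hand side is
\[
\ll q^{(9-6\sigma)/(7-2\sigma)}\,y^{(9-6\sigma)/(7-2\sigma)}(\log q)^{14}\ll q^{(9-6\sigma)/(7-2\sigma)}\,y\,(\log q)^{14},
\]
since $y\ge1$. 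The number of primitive characters $\chi\pmod q$ possessing at least one such zero is bounded by this sum, which matches the claimed exceptional count.

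The only real obstacle is the trade-off in the choice of $\sigma_0$: moving $\sigma_0$ closer to $\sigma$ shrinks the zero-density bound (improves the exceptional count) but also shrinks $\sigma-\sigma_0$ and thus worsens the error in Lemma~\ref{l6}; conversely, pushing $\sigma_0$ down toward $\tfrac12$ reduces the error $y^{\sigma_1-\sigma}$ but degrades the zero count. The symmetric choice $\sigma_0=(2\sigma+1)/4$ balances these, and the assumed ranges $A(\sigma)>4/(2\sigma-1)$ and $a(\sigma)<(4\sigma-2)/(7-2\sigma)$ are exactly what is needed to make the error $o(1)$ and the exceptional bound $o(q)$, respectively. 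Once this balance is fixed the two lemmas combine mechanically.
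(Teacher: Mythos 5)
Your proof is correct and takes exactly the same route as the paper: the paper's one-line proof says to combine Lemma~\ref{l6} and Lemma~\ref{l7} with $\sigma_0=(\sigma+1/2)/2=\tfrac{2\sigma+1}{4}$, which is precisely your choice, and your computations of $\sigma_1-\sigma_0=1/\log y$, $y^{\sigma_1-\sigma}\ll y^{(1-2\sigma)/4}$, and $(3-3\sigma_0)/(2-\sigma_0)=(9-6\sigma)/(7-2\sigma)$ all check out. (One tiny imprecision: the condition for the minimum defining $\sigma_1$ to be achieved by $\sigma_0+1/\log y$ is $1/\log y\le(\sigma-\sigma_0)/2$, not $1/\log y<\sigma-\sigma_0$, but since $\log y\to\infty$ this holds for $q$ large regardless.)
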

	\begin{proof}
		This follows from Lemma \ref{l6} and Lemma \ref{l7} with the choice of $\sigma_0=(\sigma+1/2)/2$.
	\end{proof}
	
	Now we need to give an estimation on the power of $R_y(\sigma,\chi)$. The idea is to divide $R_y(\sigma,\chi)$ into three parts. The following Lemma gives the bound of the main part.
	
	\begin{lem}\label{lem:psigma2k}
		Let $q$ be a prime number and $1/2<\sigma\leq 1$. Let $2\leq y<z$ be real numbers. Then for all positive integers $k$ with $1\leq k\leq \log q/(2\log z)$, we have 
		\[\frac{1}{q}\sum_{\chi({\rm mod}\  q)}\left|\sum_{y\leq p\leq z}\frac{\chi(p)}{p^\sigma}\right|^{2k}\ll k!\left(\sum_{y\leq p\leq z}\frac{1}{p^{2\sigma}}\right)^k+O(q^{-1/2}).\]
	\end{lem}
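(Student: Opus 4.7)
The plan is to open the $2k$-th power, interchange the order of summation with the average over $\chi$, and exploit character orthogonality modulo $q$. The hypothesis $k\le\log q/(2\log z)$ (equivalently $z^{2k}\le q$) will be used exactly once, at the decisive step where it kills all off-diagonal contributions.

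Writing $S_\chi:=\sum_{y\le p\le z}\chi(p)/p^\sigma$, I would expand
$$|S_\chi|^{2k}=\sum_{p_1,\ldots,p_{2k}\in[y,z]}\frac{\chi(p_1\cdots p_k)\,\overline{\chi(p_{k+1}\cdots p_{2k})}}{(p_1\cdots p_{2k})^\sigma},$$
swap with the average over $\chi\pmod q$, and apply the orthogonality relation
$$\frac{1}{\phi(q)}\sum_{\chi\pmod q}\chi(a)\overline{\chi(b)}=\mathbf{1}_{a\equiv b\,(\mathrm{mod}\,q),\;(ab,q)=1}.$$
Setting $a=p_1\cdots p_k$ and $b=p_{k+1}\cdots p_{2k}$, the coprimality condition eliminates tuples containing the prime $q$. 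For the remaining terms $a,b\le z^k\le q^{1/2}$, so $|a-b|<q$ and the congruence $a\equiv b\pmod q$ forces $a=b$ as integers, which in turn forces the multisets $\{p_1,\ldots,p_k\}$ and $\{p_{k+1},\ldots,p_{2k}\}$ to coincide by unique factorisation.

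What remains is a combinatorial count. A multiset at distinct primes $\pi_1,\ldots,\pi_r$ with multiplicities $m_1,\ldots,m_r$ summing to $k$ is realised by $\binom{k}{m_1,\ldots,m_r}^{2}$ pairs of ordered $k$-tuples. Using the crude inequality $\binom{k}{m_1,\ldots,m_r}^{2}\le k!\binom{k}{m_1,\ldots,m_r}$ (which amounts to $m_1!\cdots m_r!\ge 1$) together with the multinomial theorem,
$$\sum_{(m_i),(\pi_i)}\binom{k}{m_1,\ldots,m_r}^{\!2}\prod_i\pi_i^{-2m_i\sigma}\le k!\bigg(\sum_{y\le p\le z}\frac{1}{p^{2\sigma}}\bigg)^{\!k}.$$
The factor $\phi(q)/q=1-O(1/q)$, together with the separate treatment of the principal character (whose contribution is at most $|S_{\chi_0}|^{2k}/q\ll z^{2k(1-\sigma)}/q\ll q^{-\sigma}\ll q^{-1/2}$ by a Mertens-type bound), absorbs the $O(q^{-1/2})$ slack in the statement.

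The main obstacle is simply recognising that $z^{2k}\le q$ is the precise threshold eliminating off-diagonal congruences; once this is in place the rest is bookkeeping of multinomial coefficients. No deeper input (no large sieve, no contour integration, no zero-density estimate) is needed, and the quality of the bound—the factor $k!$ rather than $(2k)!/k!$—reflects exactly the gain from $\binom{k}{m_1,\ldots,m_r}^2$ versus its crude upper bound.
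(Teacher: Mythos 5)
Your proof is correct and is essentially the standard orthogonality argument that the paper invokes by citation to Lamzouri (La2011, Lemma~4.4): expand the $2k$-th power, average over all characters mod $q$, observe that the constraint $k\le\log q/(2\log z)$ forces $p_1\cdots p_k,\,p_{k+1}\cdots p_{2k}\le z^k\le q^{1/2}$ so that the congruence $\pmod q$ degenerates to an equality of integers, and then count the diagonal via multinomial coefficients with the crude bound $\binom{k}{m_1,\ldots,m_r}\le k!$. All of this is sound.

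Two small remarks. First, the mention of a ``separate treatment of the principal character'' is superfluous and slightly misleading: the orthogonality identity you wrote already handles $\chi_0$ along with every other character, and since $z\le q^{1/2}<q$ no prime in $[y,z]$ can equal $q$, so the coprimality side-condition is vacuous; nothing about $\chi_0$ needs to be peeled off. Second, your argument in fact yields the diagonal bound \emph{without} any $O(q^{-1/2})$ error at all, since $\phi(q)/q<1$: the $O(q^{-1/2})$ in the lemma's statement is slack that your route never produces. (In variants of this computation in the literature the $O(q^{-1/2})$ typically arises from bounding residual off-diagonal terms or from a P\'olya--Vinogradov step when the threshold on $k$ is looser; under the stated hypothesis $z^{2k}\le q$ it is not needed, and your observation that this threshold is exactly what makes the argument purely diagonal is correct.) Neither point affects the validity of the proof.
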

	\begin{proof}
		See~\cite[Lemma $4.4$]{La2011}
	\end{proof}
	
	\begin{lem}\label{lem:1qchi2k}
		Let $q$ be a prime number and $1/2<\sigma<1$. Suppose that $y=(\log q)^a$ for some $a\geq 1$ and $k$ is an integer with $1<k<\frac{\log q}{3a\log_2 q}$. Then there exists a constant $c_1(\sigma)>0$ such that 
		\[\frac{1}{q}\sum_{\chi({\rm mod}\  q)}|R_y(\sigma,\chi)|^{2k}\ll\left(\frac{c_1(\sigma)k^{1-\sigma}}{(\log k)^{\sigma}}\right)^{2k}.\]
	\end{lem}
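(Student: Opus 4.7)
The plan is to decompose $R_y(\sigma,\chi)$ into three pieces corresponding to higher prime powers, small primes $p\le X$, and large primes $X<p\le y$, where $X\asymp k\log k$ is a threshold to be optimized. Explicitly, write $R_y(\sigma,\chi)=T_1(\chi)+T_2(\chi)+T_3(\chi)$ with
\[T_1(\chi)=\sum_{p\le X}\frac{\chi(p)}{p^\sigma},\qquad T_2(\chi)=\sum_{X<p\le y}\frac{\chi(p)}{p^\sigma},\qquad T_3(\chi)=\sum_{\substack{p^n\le y\\ n\ge 2}}\frac{\chi(p)^n}{np^{n\sigma}},\]
so that $|R_y|^{2k}\le 3^{2k-1}(|T_1|^{2k}+|T_2|^{2k}+|T_3|^{2k})$ and it suffices to bound the $2k$-th moment of each piece.

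The prime-power tail $T_3$ converges absolutely (since $2\sigma>1$), giving $|T_3(\chi)|\ll_\sigma 1$ uniformly in $\chi$. For $T_1$, the triangle inequality combined with partial summation against the prime number theorem furnishes the deterministic estimate $|T_1(\chi)|\le\sum_{p\le X}p^{-\sigma}\ll_\sigma X^{1-\sigma}/\log X$. For $T_2$, I invoke Lemma \ref{lem:psigma2k} with $z=y=(\log q)^a$; the hypothesis $k<\log q/(3a\log_2 q)$ ensures $k\le\log q/(2\log y)$, so the lemma applies. Using Mertens-type bounds to obtain $\sum_{p>X}p^{-2\sigma}\ll_\sigma X^{1-2\sigma}/\log X$, together with Stirling's bound $k!\ll(k/\mme)^k\sqrt k$, this yields
\[\frac1q\sum_{\chi({\rm mod}\ q)}|T_2(\chi)|^{2k}\ll\left(\frac{c(\sigma)\,k\,X^{1-2\sigma}}{\log X}\right)^{k}+O(q^{-1/2}).\]

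Balancing the bound $(X^{1-\sigma}/\log X)^{2k}$ from $T_1$ against $(kX^{1-2\sigma}/\log X)^{k}$ from $T_2$ by equating their $2k$-th roots forces $X^{1/2}\asymp(k\log X)^{1/2}$, i.e.\ $X\asymp k\log k$. Substituting $X=C(\sigma)k\log k$ reduces both contributions to the common size $(c_1(\sigma)k^{1-\sigma}/(\log k)^\sigma)^{2k}$, as required. The constraint on $k$ also yields $\log k<\log_2 q$ and hence $X\ll\log q/a\le y$, so the splitting is legitimate, and the additive $O(q^{-1/2})$ from Lemma \ref{lem:psigma2k} is dwarfed by the main term after taking $c_1(\sigma)$ large enough. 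The main delicate point is securing the exponent $\sigma$ (rather than $\tfrac12$) on $\log k$ in the denominator: this depends on retaining the $\log X$ factor from the prime number theorem in both the sup-norm bound for $T_1$ and in the moment bound for $T_2$, and choosing the splitting threshold at $X\asymp k\log k$ rather than the naive $X\asymp k$, so that the polynomial and logarithmic parts of the two contributions cancel cleanly.
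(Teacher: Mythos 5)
Your decomposition of $R_y(\sigma,\chi)$ into small primes ($p\le X$), large primes ($X<p\le y$), and higher prime powers, with the threshold $X\asymp k\log k$, bounding the first piece deterministically, the second via Lemma \ref{lem:psigma2k}, and the third by absolute convergence, is exactly the paper's argument; the paper simply plugs in $k\log k$ from the outset rather than deriving it by balancing. Your proof is correct and follows essentially the same route.
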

	\begin{proof}
		The power mean inequality gives
		\[\frac{1}{q}\sum_{\chi({\rm mod}\  q)}|R_y(\sigma,\chi)|^{2k}\leq \frac{3^{2k}}{q}\sum_{\chi}\left(\left(\sum_{p\leq k\log k}\frac{1}{p^{\sigma}}\right)^{2k}+\left|\sum_{k\log k\leq p\leq y}\frac{\chi(p)}{p^{\sigma}}\right|^{2k}+\left|\sum_{n\geq 2,p^n\leq y}\frac{1}{np^{n\sigma}}\right|^{2k}\right).\]
		Note that when $1/2< \sigma< 1$, we have
		\[\sum_{p\leq x}\frac{1}{p^{\sigma}}\ll\frac{x^{1-\sigma}}{(1-\sigma)\log x},\]
		and when $\sigma>1$, we have 
		\begin{equation}\label{eq:psigma1}
			\sum_{x_0\leq p\leq x_1}\frac{1}{p^{\sigma}}\ll\int_{x_0}^\infty \frac{1}{t^{\sigma}}\mmd\pi(t)\ll \frac{x_0^{1-\sigma}}{(\sigma-1)\log x_0}.
		\end{equation}
		So together with Lemma \ref{lem:psigma2k}, we have 
		\[\left(\sum_{p\leq k\log k}\frac{1}{p^{\sigma}}\right)^{2k}\ll \left(\frac{(k\log k)^{1-\sigma}}{(1-\sigma)\log k}\right)^{2k},\]
		and 
		\begin{align*}
			&\frac{1}{q}\sum_{\chi}\left|\sum_{k\log k\leq p\leq y}\frac{\chi(p)}{p^{\sigma}}\right|^{2k}\ll \left(\sum_{k\log k\leq p\leq y}\frac{k}{p^{2\sigma}}\right)^k+O(q^{-1/2})\\
			\ll& \left(\frac{k(k\log k)^{1-2\sigma}}{(2\sigma-1)\log k}\right)^k+O(q^{-1/2}).
		\end{align*}
		Finally, the third term is dominated by $\left(\sum_{m\geq 2}m^{-2\sigma}\right)^{2k}.$ This completes the proof.
	\end{proof}
	The following lemma shows that there is  only a small number of characters $\chi$ such that the values of  $|R_y(\sigma,\chi)|$ are large.

	\begin{lem}\label{lem:AqB}
		Let $q$ be a large prime number and $1/2<\sigma<1$. Suppose that $y=(\log q)^a$ for some $a\geq 1$. We denote  
		\[\mca_q=\{\chi({\rm mod}\  q)\,:\,|R_y(\sigma,\chi)|\geq \frac{(\log q)^{1-\sigma}}{\log_2 q}\}.\]
		Then there exists a constant $c_2(\sigma)>0$ such that 
		\[\frac{\# \mca_q}{q}\ll \exp\left(-\frac{c_2(\sigma)\log q}{\log_2 q}\right).\]
	\end{lem}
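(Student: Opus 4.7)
The plan is to apply a Markov-type inequality at an optimally chosen moment $2k$ and then invoke Lemma~\ref{lem:1qchi2k} to get decay. Concretely, by Markov's inequality,
\[
\frac{\#\mca_q}{q}\le \left(\frac{\log_2 q}{(\log q)^{1-\sigma}}\right)^{2k}\cdot\frac{1}{q}\sum_{\chi({\rm mod}\,q)}|R_y(\sigma,\chi)|^{2k},
\]
and Lemma~\ref{lem:1qchi2k} then gives (provided $1<k<\log q/(3a\log_2 q)$)
\[
\frac{\#\mca_q}{q}\ll \left(\frac{c_1(\sigma)\, k^{1-\sigma}\log_2 q}{(\log k)^{\sigma}(\log q)^{1-\sigma}}\right)^{2k}.
\]

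Next I would choose $k$ so as to balance the size of the bracket against its exponent. The natural choice is
\[
k=\left\lfloor \eta\,\frac{\log q}{\log_2 q}\right\rfloor
\]
for a small constant $\eta=\eta(\sigma,a)>0$ to be fixed. For this $k$ one has $\log k=\log_2 q+O(\log_3 q)$ and $k^{1-\sigma}=\eta^{1-\sigma}(\log q)^{1-\sigma}(\log_2 q)^{-(1-\sigma)}(1+o(1))$, so
\[
\frac{c_1(\sigma)\, k^{1-\sigma}\log_2 q}{(\log k)^{\sigma}(\log q)^{1-\sigma}}=c_1(\sigma)\,\eta^{1-\sigma}(1+o(1)).
\]
Hence if $\eta$ is small enough that $c_1(\sigma)\eta^{1-\sigma}\le \mme^{-1}$ (and also $\eta<1/(3a)$ so that $k$ lies in the admissible range of Lemma~\ref{lem:1qchi2k}), the bracket is bounded by $\mme^{-1}$ for all sufficiently large $q$, giving
\[
\frac{\#\mca_q}{q}\ll \mme^{-2k}\ll \exp\!\left(-\frac{c_2(\sigma)\log q}{\log_2 q}\right)
\]
with $c_2(\sigma)=2\eta$, which is the desired bound.

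The only subtle point is the calibration of $\eta$: one must simultaneously ensure $\eta^{1-\sigma}$ is small enough to produce exponential decay and that $k$ satisfies the moment hypothesis of Lemma~\ref{lem:1qchi2k}. Both are purely constant-level constraints that depend on $\sigma$ (and on the exponent $a$ in $y=(\log q)^a$), so picking $\eta(\sigma,a)=\min\bigl((\mme c_1(\sigma))^{-1/(1-\sigma)},1/(4a)\bigr)$ closes the argument. Apart from this bookkeeping, the proof is a direct Chebyshev-type application of the moment estimate already in hand.
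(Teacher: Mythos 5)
Your proposal is correct and follows essentially the same route as the paper: apply Markov's inequality at the $2k$-th moment, invoke Lemma~\ref{lem:1qchi2k}, and pick $k\asymp \log q/\log_2 q$ so that the ratio inside the $2k$-th power is a constant less than one. The paper's explicit choice $k=\lfloor \log q/(c_1'(\sigma)\log_2 q)\rfloor$ with $c_1'(\sigma)=\max\{(1+c_1(\sigma))^{1/(1-\sigma)},3a\}$ is exactly your $\eta=1/c_1'(\sigma)$ with the same two constraints (smallness of $c_1(\sigma)\eta^{1-\sigma}$ and the range $k<\log q/(3a\log_2 q)$); the resulting constants $c_2(\sigma)$ differ but both are positive, which is all that is claimed.
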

	
	\begin{proof}
		It is easy to see that 
		\[\# \mca_q\left(\frac{(\log q)^{1-\sigma}}{\log_2 q}\right)^{2k}\leq \sum_{\chi({\rm mod}\  q)}|R_y(\sigma,\chi)|^{2k}.\]
		We choose $k=[\frac{\log q}{c_1'(\sigma)\log_2 q}]$ where $c_1'(\sigma)=\max\{(1+c_1(\sigma))^{\frac{1}{1-\sigma}},3a\}$, then Lemma \ref{lem:1qchi2k} gives 
		\[\frac{\# \mca_q}{q}\ll \left(\frac{c_1(\sigma)}{c_1'(\sigma)^{1-\sigma}}\right)^{\frac{2\log q}{c_1'(\sigma)\log_2 q}} =\exp\left(-\frac{c_2(\sigma)\log q}{\log_2 q}\right),\]
		where $c_2(\sigma)=\frac{2}{c_1'(\sigma)}\log(\frac{c_1'(\sigma)^{1-\sigma}}{c_1(\sigma)})$.
	\end{proof}
	
	Similar to $R_y(\sigma,\chi)$, let 
	\[R_y(\sigma,X)=\sum_{p^n\leq y}\frac{X(p)^n}{np^{\sigma n}}.\]
	The following lemma gives the relation between $R_y(\sigma,\chi)$ and $R_y(\sigma,X)$. 
	
	\begin{lem}\label{lem:rykl}
		Let $1/2<\sigma<1$ and $q$ be a large prime number.  Suppose that $y=(\log q)^a$ for some $a\geq 1$. Then for any integers $0\leq k,\ell\leq \frac{\log q}{a\log_2 q}$, we have 
		\[\frac{1}{q}\sum_{\chi({\rm mod}\  q)}R_y(\sigma,\chi)^k\overline{R_y(\sigma,\chi)}^\ell=\mbe\left(R_y(\sigma,X)^k\overline{R_y(\sigma,X)}^\ell\right).\]
	\end{lem}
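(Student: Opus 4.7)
The plan is a double multinomial expansion combined with orthogonality of Dirichlet characters modulo the prime $q$. I would first expand
\[
R_y(\sigma,\chi)^k\,\overline{R_y(\sigma,\chi)}^{\ell}
= \sum_{(p_i^{n_i})_{i=1}^k}\sum_{(r_j^{m_j})_{j=1}^\ell}
\Bigl(\prod_i \tfrac{1}{n_i p_i^{n_i\sigma}}\Bigr)\Bigl(\prod_j \tfrac{1}{m_j r_j^{m_j\sigma}}\Bigr)\chi(M)\overline{\chi(N)},
\]
where each $p_i^{n_i}, r_j^{m_j}\le y$, $M=\prod_i p_i^{n_i}$, and $N=\prod_j r_j^{m_j}$. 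Complete multiplicativity of $\chi$ is what packages all the character factors into the single product $\chi(M)\overline{\chi(N)}$.

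Averaging over $\chi\pmod q$ and applying orthogonality gives, for prime $q$,
\[
\frac{1}{\varphi(q)}\sum_{\chi\pmod q}\chi(M)\overline{\chi(N)}
=\mathbbm{1}[\gcd(MN,q)=1]\,\mathbbm{1}[M\equiv N\pmod q].
\]
Since every prime factor of $M$ and $N$ is at most $y=(\log q)^a<q$, the coprimality indicator is automatic. The decisive step is to upgrade the congruence $M\equiv N\pmod q$ to an equality. From $k,\ell\le \log q/(a\log_2 q)$ we get
\[
M\le y^k\le (\log q)^{\log q/\log_2 q}=q,
\]
and since $q\nmid M$, in fact $M<q$; similarly $N<q$. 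Hence $M\equiv N\pmod q$ is equivalent to $M=N$.

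On the random side, the analogous multinomial expansion of $R_y(\sigma,X)^k\,\overline{R_y(\sigma,X)}^{\ell}$ produces, after taking expectations, $\mbe\bigl[\prod_p X(p)^{\alpha_p-\beta_p}\bigr]$, where $\alpha_p,\beta_p$ are the aggregate exponents of the prime $p$ in $M$ and $N$. Independence of the $X(p)$ together with $\mbe[X(p)^r]=\mathbbm{1}[r=0]$ force $\alpha_p=\beta_p$ for every $p$, which by unique factorization is equivalent to $M=N$. The surviving diagonal terms then match term by term on the two sides, yielding the claimed identity. The main obstacle I expect is the size step: at the top of the range $k,\ell=\lfloor\log q/(a\log_2 q)\rfloor$ the estimate $y^k\le q$ is essentially tight, and the strict inequality $M<q$ must be wrung from the coprimality of $M$ with $q$. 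The hypothesis on $k,\ell$ is precisely calibrated to allow this passage from congruence to equality.
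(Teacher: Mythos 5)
Your proof follows essentially the same route as the paper: multinomial expansion of the two factors, orthogonality of characters modulo the prime $q$, the size bound $y^k\le (\log q)^{ak}\le q$ forcing the congruence $M\equiv N\pmod q$ to be an equality, and matching against the diagonal in the expansion of the random product. The only thing you add is making explicit the small step from $M\le q$ to $M<q$ via coprimality, which the paper leaves implicit; the approach and the key calculation $(\log q)^{\log q/\log_2 q}=q$ are identical.
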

	\begin{proof}
		By expanding the $R_y(\sigma,\chi)$, we have 
		\begin{align*}
			&\frac{1}{q}\sum_{\chi({\rm mod}\  q)}R_y(\sigma,\chi)^k\overline{R_y(\sigma,\chi)}^\ell\\
			=&\fq\sum_{p_1^{n_1},\dots,p_{k+\ell}^{n_{k+\ell}}\leq y}\sum_{\chi({\rm mod}\  q)}\frac{\chi(p_1^{n_1}\dots p_k^{n_k})\overline{\chi(p_{k+1}^{n_{k+1}}\dots p_{k+\ell}^{n_{k+\ell}})}}{n_1p_1^{n_1\sigma}\dots n_{k+\ell}p_{k+\ell}^{n_{k+\ell}\sigma}}.
		\end{align*}
		The orthogonality of Dirichlet character shows that the inner sum vanishes unless $p_1^{n_1}\dots p_k^{n_k}\equiv p_{k+1}^{n_{k+1}}\dots p_{k+\ell}^{n_{k+\ell}}\pmod{q}$. But we note that 
		\[p_1^{n_1}\dots p_k^{n_k}\leq y^k\leq q.\]
		This implies that the inner sum is non-vanishing if and only if $p_1^{n_1}\dots p_k^{n_k}= p_{k+1}^{n_{k+1}}\dots p_{k+\ell}^{n_{k+\ell}}$. This exactly gives $\mbe\left(R_y(\sigma,X)^k\overline{R_y(\sigma,X)}^\ell\right)$.
	\end{proof}
	
	We give a simple estimation on the partial sum which we will often use.
	
	\begin{lem}\label{lem:zct}
		Fix $D>0$ and $1/2\leq \sigma<1$.Let $N=T/(D\log T)$. Then for any $z\leq \frac{1}{2D\mme^2}T^{\sigma}$, we have 
		\[\sum_{n\geq N}\frac{1}{n!}\left(\frac{zT^{1-\sigma}}{\log T}\right)^n\leq \mme^{-N}.\]
	\end{lem}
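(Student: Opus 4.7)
The plan is to reduce the tail bound to a geometric series via Stirling's formula. Write $x:=zT^{1-\sigma}/\log T$ for brevity. The hypothesis $z\le T^\sigma/(2D\mme^2)$ together with the definition $N=T/(D\log T)$ gives
\[
x \;\le\; \frac{T^\sigma}{2D\mme^2}\cdot\frac{T^{1-\sigma}}{\log T} \;=\; \frac{T}{2D\mme^2\log T} \;=\; \frac{N}{2\mme^2},
\]
so in particular $x\le N$, meaning that the terms of the series are already past the peak of the Poisson-like weight $x^n/n!$ for all $n\ge N$.

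Next I would apply the elementary bound $n!\ge(n/\mme)^n$, so for every $n\ge N$,
\[
\frac{x^n}{n!} \;\le\; \Bigl(\frac{\mme x}{n}\Bigr)^{\!n} \;\le\; \Bigl(\frac{\mme x}{N}\Bigr)^{\!n} \;\le\; \Bigl(\frac{1}{2\mme}\Bigr)^{\!n},
\]
where the second inequality uses that $\mme x/n$ is a decreasing function of $n$ for $n\ge N\ge x$ (so raising to the $n$th power of something at most $1$ decreases it), and the third uses the estimate $x\le N/(2\mme^2)$ obtained above.

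Summing the resulting geometric series yields
\[
\sum_{n\ge N}\frac{x^n}{n!} \;\le\; \sum_{n\ge N}\Bigl(\frac{1}{2\mme}\Bigr)^{\!n} \;=\; \frac{(2\mme)^{-N}}{1-(2\mme)^{-1}} \;=\; \frac{2\mme}{2\mme-1}\cdot\frac{\mme^{-N}}{2^N}.
\]
Since $2\mme/(2\mme-1)<2$, the prefactor $\tfrac{2\mme}{2\mme-1}\cdot 2^{-N}$ is $\le 1$ as soon as $N\ge 1$ (which holds for all $T$ large enough in terms of $D$, the regime in which the lemma is used). This delivers the desired estimate $\sum_{n\ge N} x^n/n!\le \mme^{-N}$.

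There isn't really a deep obstacle here; the only point requiring a moment's care is the monotonicity step in the second displayed chain, which relies on knowing $n\ge x$, and this is exactly what the factor $1/(2\mme^2)$ in the hypothesis buys us, leaving plenty of slack to absorb the constant from the geometric sum and recover the clean right-hand side $\mme^{-N}$.
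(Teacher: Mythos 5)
Your proof is correct and takes essentially the same route as the paper: apply $n!\ge(n/\mme)^n$, bound $\mme x/n\le\mme x/N\le 1/(2\mme)$ for $n\ge N$, and sum the geometric series; you merely spell out the constant $\tfrac{2\mme}{2\mme-1}2^{-N}\le 1$ that the paper absorbs silently. One small remark: the parenthetical justification for $(\mme x/n)^n\le(\mme x/N)^n$ is garbled as written — the correct (and simpler) reason is just that $0\le \mme x/n\le\mme x/N$ when $n\ge N$, and $a\mapsto a^n$ is increasing on $[0,\infty)$; the phrase about "raising something at most $1$ to the $n$th power" is not what is being used there.
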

	\begin{proof}
		The Stirling's approximation shows that $n!\geq (n/\mme)^n$. Then we have 
		\[\sum_{n\geq N}\frac{1}{n!}\left(\frac{zT^{1-\sigma}}{\log T}\right)^n\leq \sum_{n\geq N}\left(\frac{\mme zT^{1-\sigma}}{N{\log T}}\right)^n=\sum_{n\geq N}\left(\mme DzT^{-\sigma}\right)^n.\]
		By the assumption $\mme DzT^{-\sigma}\leq \frac{1}{2\mme}$, we have 
		\[\sum_{n\geq N}\frac{1}{n!}\left(\frac{zT^{1-\sigma}}{\log T}\right)^n\leq\sum_{n\geq N}\frac{1}{(2\mme)^n}\leq \mme^{-N}.\]
	\end{proof}
	
	Next we will investigate the discrepancy between the exponential of $R_y(\sigma,\chi)$ and that of the random variable $R_y(\sigma,X)$. The idea is to expand the exponential function, and the main part is given by Lemma \ref{lem:rykl}. Then we only need to estimate the error terms.
	
	\begin{lem}\label{lem:chiaqc}
		Let $q$ be a large prime number and $1/2<\sigma<1$. Suppose that $y=(\log q)^a$ for some $a\geq 1$. Then there exists a constant $c(\sigma,a)$ such that for any complex numbers $z_1,z_2$ with $|z_1|,|z_2|\leq c(\sigma,a)(\log q)^\sigma$, we have 
		\begin{align*}
			&\fq\sum_{\chi\in \mca_q^c}\exp\left(z_1R_y(\sigma,\chi)+z_2\overline{R_y(\sigma,\chi)}\right)\\
			=&\mbe\left(\exp\left(z_1R_y(\sigma,X)+z_2\overline{R_y(\sigma,X)}\right)\right)+\exp\left(-\frac{c_3(\sigma)\log q}{\log_2 q}\right),
		\end{align*}
		for some constant $c_3(\sigma)>0$, where $\mca_q^c$ is the set of all $\chi ({\rm mod}\ q)$ with $\chi \notin \mca_q$. 
	\end{lem}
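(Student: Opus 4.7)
The plan is to Taylor-expand both sides, match the resulting moments via Lemma \ref{lem:rykl}, and split the sum over the total degree $m=k+\ell$ into a bulk and a tail. Writing
\[
\exp\bigl(z_1 R_y(\sigma,\chi)+z_2\overline{R_y(\sigma,\chi)}\bigr)=\sum_{m\ge 0}\frac{1}{m!}\sum_{k+\ell=m}\binom{m}{k}z_1^k z_2^{\ell}\,R_y(\sigma,\chi)^k\overline{R_y(\sigma,\chi)}^{\ell},
\]
and similarly for $R_y(\sigma,X)$, I would take a threshold $M:=\lfloor \log q/(6a\log_2 q)\rfloor$ so that both Lemma \ref{lem:rykl} and Lemma \ref{lem:1qchi2k} apply to every $(k,\ell)$ with $k+\ell\le 2M$.

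For the bulk $m\le M$, the decomposition $\fq\sum_{\chi\in\mca_q^c}=\fq\sum_\chi-\fq\sum_{\chi\in\mca_q}$ splits the contribution. By Lemma \ref{lem:rykl}, the full character average of $R_y(\sigma,\chi)^k\overline{R_y(\sigma,\chi)}^{\ell}$ equals $\mbe\bigl(R_y(\sigma,X)^k\overline{R_y(\sigma,X)}^{\ell}\bigr)$, so reassembling with the weights $z_1^k z_2^\ell/(k!\ell!)$ produces exactly the degree-$\le M$ truncation of the target $\mbe\bigl(\exp(z_1 R_y(\sigma,X)+z_2\overline{R_y(\sigma,X)})\bigr)$. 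For the residual sum over $\mca_q$, Cauchy--Schwarz together with Lemmas \ref{lem:AqB} and \ref{lem:1qchi2k} yields
\[
\Bigl|\fq\sum_{\chi\in\mca_q}R_y(\sigma,\chi)^k\overline{R_y(\sigma,\chi)}^{\ell}\Bigr|\ll\exp\Bigl(-\frac{c_2(\sigma)\log q}{2\log_2 q}\Bigr)\Bigl(\frac{c_1(\sigma)m^{1-\sigma}}{(\log m)^\sigma}\Bigr)^{\!m}.
\]
Multiplying by $|z_1|^k|z_2|^\ell/(k!\ell!)$, summing over $k+\ell=m$ to get $(|z_1|+|z_2|)^m/m!$, and applying Stirling's formula converts each $m$-th term into a bound of the shape $\exp(-c_2(\sigma)\log q/(2\log_2 q))\bigl(2ec(\sigma,a)c_1(\sigma)(\log q)^\sigma/(m\log m)^\sigma\bigr)^m$.

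For the tail $m>M$, on the character side the defining inequality $|R_y(\sigma,\chi)|\le(\log q)^{1-\sigma}/\log_2 q$ valid on $\mca_q^c$ gives
\[
\sum_{m>M}\frac{(|z_1|+|z_2|)^m}{m!}\Bigl(\frac{(\log q)^{1-\sigma}}{\log_2 q}\Bigr)^{\!m}\ll\sum_{m>M}\Bigl(\frac{2ec(\sigma,a)\log q}{m\log_2 q}\Bigr)^{\!m},
\]
which is a convergent geometric-type series dominated by $\exp(-c_4\log q/\log_2 q)$ once $c(\sigma,a)$ is small compared to $1/(6a)$. On the random-model side, the proof of Lemma \ref{lem:1qchi2k} carries over to $\mbe|R_y(\sigma,X)|^{2k}$ without the $O(q^{-1/2})$ loss and is therefore valid for every $k$; combined with Stirling it controls the random-model tail by $\exp(-c_5\log q/\log_2 q)$.

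The main obstacle is calibrating $c(\sigma,a)$ correctly. The bulk factor $\bigl(2ec(\sigma,a)c_1(\sigma)(\log q)^\sigma/(m\log m)^\sigma\bigr)^m$ attains its maximum near $m^\ast\asymp\log q/\log_2 q$ at a value of order $\exp\bigl(\sigma(2ec(\sigma,a)c_1(\sigma))^{1/\sigma}\log q/(e\log_2 q)\bigr)$, and this must be absorbed by the Cauchy--Schwarz gain $\exp(-c_2(\sigma)\log q/(2\log_2 q))$. This forces $c(\sigma,a)$ to lie below an explicit threshold depending on $\sigma$, $c_1(\sigma)$, and $c_2(\sigma)$; with such a choice the three error contributions (the bulk Cauchy--Schwarz piece, the character-side tail, and the random-model-side tail) combine into a single $\exp(-c_3(\sigma)\log q/\log_2 q)$ term, proving the lemma.
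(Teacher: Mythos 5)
Your proposal follows the paper's proof essentially step for step: you decompose the character average over $\mca_q^c$ by degree, split at a threshold of size $\asymp \log q/\log_2 q$, use Lemma~\ref{lem:rykl} to match the bulk against the random model, handle the sum over $\mca_q$ by Cauchy--Schwarz with Lemmas~\ref{lem:AqB} and~\ref{lem:1qchi2k}, control the character-side tail via the defining bound on $\mca_q^c$, and control the random-model tail with (the random-model analogue of) Lemma~\ref{lem:1qchi2k}. This is exactly the paper's decomposition \eqref{eq:expry} and the treatment of its three pieces. Your remarks about the random-model version of Lemma~\ref{lem:1qchi2k} being valid for all $k$ with no $q^{-1/2}$ loss, and about where the maximum of the bulk error term occurs and how this constrains $c(\sigma,a)$, make explicit two points the paper glosses over, but do not change the underlying argument.
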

	\begin{proof}
		Let $N=\frac{\log q}{2(a+\sigma)\log_2 q}$, then we have 
		\begin{equation}\label{eq:expry}
			\begin{split}
				&\fq\sum_{\chi\in \mca_q^c}\exp\left(z_1R_y(\sigma,\chi)+z_2\overline{R_y(\sigma,\chi)}\right)\\
				=&\fq\sum_{k+\ell\leq N}\sum_{\chi}-\fq\sum_{k+\ell\leq N}\sum_{\chi\in \mca_q}+\fq\sum_{k+\ell\geq N}\sum_{\chi\in \mca_q^c}\frac{z_1^kz_2^\ell R_y(\sigma,\chi)^k\overline{R_y(\sigma,\chi)}^\ell}{k!\ell!}.
			\end{split}
		\end{equation}
		Lemma \ref{lem:rykl} gives the estimation of the first term of  \eqref{eq:expry}:
		\begin{align*}
			&\fq\sum_{k+\ell\leq N}\sum_{\chi}\frac{z_1^kz_2^\ell R_y(\sigma,\chi)^k\overline{R_y(\sigma,\chi)}^\ell}{k!\ell!}=\sum_{k+\ell\leq N}\frac{z_1^kz_2^\ell}{k!\ell!}\mbe(R_y(\sigma,X)^k\overline{R_y(\sigma,X)}^\ell)+O(y^{k+\ell}/q)\\
			=&\mbe\left(\exp\left(z_1R_y(\sigma,X)+z_2\overline{R_y(\sigma,X)}\right)\right)-\sum_{k+\ell\geq N}\frac{z_1^kz_2^\ell}{k!\ell!}\mbe\left(R_y(\sigma,X)^k\overline{R_y(\sigma,X)}^\ell\right)+O\left(\sum_{k+\ell\leq N}\frac{z_1^kz_2^\ell y^{k+\ell}}{k!\ell!q}\right).
		\end{align*}
		By Lemma \ref{lem:1qchi2k}, the second term is bounded by 
		\begin{align*}
			&\sum_{k+\ell\geq N}\frac{(z_1+z_2)^{k+\ell}}{k!\ell!}\left(\frac{c_1(\sigma)(k+\ell)^{1-\sigma}}{(\log (k+\ell))^\sigma}\right)^{k+\ell}=\sum_{n\geq N}\frac{1}{n!}\left(\frac{c_1(\sigma)(z_1+z_2)n^{1-\sigma}}{(\log n)^\sigma}\right)^n\sum_{k+\ell=n}\frac{n!}{k!\ell!}
		\end{align*}
		Since $\sum_{k+\ell=n}\frac{n!}{k!\ell!}=2^n$, so Lemma \ref{lem:zct} shows that the summation above is bounded by $\mme^{-N}$ for some suitable constant $c(\sigma,a)$. For the third term, since $(yz_1)^N\ll \sqrt{q}$, we have 
		\[\sum_{k+\ell\leq N}\frac{z_1^kz_2^\ell y^{k+\ell}}{k!\ell!q}\leq \sum_{n\leq N}\frac{(2(z_1+z_2)y)^n}{qn!}\ll \frac{(y(z_1+z_2))^N}{q}\ll q^{-1/2}.\]
		Next we consider the second term of Eq. \eqref{eq:expry}. By the Cauchy-Schwarz inequality, we have 
		\[\left|\sum_{\chi\in \mca_q}R_y(\sigma,\chi)^k\overline{R_y(\sigma,\chi)}^\ell\right|\leq \left(\# \mca_q\sum_{\chi}|R_y(\sigma,\chi)|^{k+\ell}\right)^{\frac{1}{2}}.\]
		Together with Lemma \ref{lem:1qchi2k} and Lemma \ref{lem:AqB}, we have 
		\begin{align*}
			&\fq\sum_{k+\ell\leq N}\sum_{\chi\in \mca_q}\frac{z_1^kz_2^\ell R_y(\sigma,\chi)^k\overline{R_y(\sigma,\chi)}^\ell}{k!\ell!}\\
			\ll& \exp\left(-\frac{c_2(\sigma)\log q}{2\log_2 q}\right)\sum_{k+\ell\leq N}\frac{z_1^kz_2^\ell}{k!\ell!}\left(c_1(\sigma)(k+\ell)^{1-\sigma}\right)^{k+\ell}\\
			\ll& \exp\left(-\frac{c_2(\sigma)\log q}{2\log_2 q}\right)\sum_{n\leq N}\frac{1}{n!}(2c_{\sigma}(z_1+z_2)N^{1-\sigma})^n\ll \exp\left(-\frac{c_2(\sigma)\log q}{2\log_2 q}\right)
		\end{align*}
		Finally, we consider the last term of Eq. \eqref{eq:expry}. Since $|R(\sigma,\chi)|<(\log q)^{1-\sigma}/\log_2 q$ when $\chi\in \mca_q^c$, the third term is bounded by
		\begin{align*}
			\sum_{n\geq N}\left(\frac{(z_1+z_2)(\log q)^{1-\sigma}}{\log_2 q}\right)^n\sum_{k+\ell=n}\frac{1}{k!\ell!} =\sum_{n\geq N}\frac{1}{n!}\left(\frac{2(z_1+z_2)(\log q)^{1-\sigma}}{\log_2 q}\right)^n
		\end{align*}
		Then following from Lemma \ref{lem:zct}, it is bounded by $e^{-N}$. This completes the proof.
	\end{proof}
	
	Now we are able to prove the discrepancy between the exponential of $L(\sigma,\chi)$ and that of the random variable $L(\sigma,X)$ via Lemma \ref{lem:chiaqc}. The following Lemma gives the difference between the exponential of $\log L(\sigma,X)$ and the exponential of $R_y(\sigma,X)$.
	
	\begin{lem}\label{lem:yuv}
		Let $y$ be a large positive real number, then for any real numbers $u,v$ with $|u|+|v|\leq y^{\sigma-1/2}$, we have 
		\begin{align*}
			&\mbe\left(\exp\left(\mmi u\RE \log L(\sigma,X)+\mmi v\IM \log L(\sigma,X)\right)\right)\\
			=&\mbe\left(\exp\left(\mmi u\RE R_y(\sigma,X)+\mmi v\IM R_y(\sigma,X)\right)\right)+O((|u|+|v|)/y^{\sigma-1/2}).
		\end{align*}
	\end{lem}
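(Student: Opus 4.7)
The plan is to reduce the estimate to controlling $\log L(\sigma,X)-R_y(\sigma,X)$ in $L^1$, using the elementary Lipschitz bound $|\mme^{\mmi\alpha}-\mme^{\mmi\beta}|\le|\alpha-\beta|$ for real $\alpha,\beta$. Concretely, I would write
\[\log L(\sigma,X)=R_y(\sigma,X)+T_y(\sigma,X),\qquad T_y(\sigma,X):=\sum_{p^n>y}\frac{X(p)^n}{np^{n\sigma}},\]
where both series converge almost surely for $\sigma>1/2$. Setting $\alpha:=u\RE\log L(\sigma,X)+v\IM\log L(\sigma,X)$ and $\beta:=u\RE R_y(\sigma,X)+v\IM R_y(\sigma,X)$, the difference $\alpha-\beta$ equals $u\RE T_y(\sigma,X)+v\IM T_y(\sigma,X)$, which has modulus at most $(|u|+|v|)\,|T_y(\sigma,X)|$. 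Taking expectations then gives
\[\bigl|\mbe(\mme^{\mmi\alpha})-\mbe(\mme^{\mmi\beta})\bigr|\le(|u|+|v|)\,\mbe|T_y(\sigma,X)|.\]

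The next step is to bound $\mbe|T_y(\sigma,X)|$ via Cauchy--Schwarz by its second moment. Because the $X(p)$ are independent and uniform on the unit circle, the family $\{X(p)^n\}_{p\text{ prime},\,n\ge 1}$ is orthonormal in $L^2$ of the underlying probability space, so
\[\mbe|T_y(\sigma,X)|^2=\sum_{p^n>y}\frac{1}{n^2p^{2n\sigma}}\le\sum_{m>y}\frac{1}{m^{2\sigma}}\ll y^{1-2\sigma},\]
since $2\sigma>1$. Hence $\mbe|T_y(\sigma,X)|\ll y^{1/2-\sigma}$, and combining with the previous display gives the claimed bound $O\bigl((|u|+|v|)/y^{\sigma-1/2}\bigr)$.

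There is essentially no obstacle; the argument is a short second-moment calculation. The hypothesis $|u|+|v|\le y^{\sigma-1/2}$ is not required for the Lipschitz step, which is valid for all real $u,v$, but is imposed so that the error term stays below the trivial $O(1)$ bound on the difference of two characteristic functions and the conclusion becomes meaningful. The only standard inputs are the almost-sure convergence of the defining series for $\log L(\sigma,X)$ in the half-plane $\sigma>1/2$ and the orthogonality of the random monomials $X(p)^n$.
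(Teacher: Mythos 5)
Your argument is correct and is the standard second-moment proof of this type of tail-truncation estimate: you split $\log L(\sigma,X)=R_y(\sigma,X)+T_y(\sigma,X)$, apply the Lipschitz bound $|\mme^{\mmi\alpha}-\mme^{\mmi\beta}|\le|\alpha-\beta|$, and then control $\mbe|T_y(\sigma,X)|$ via Cauchy--Schwarz and the orthonormality of $\{X(p)^n\}$ to get $\mbe|T_y(\sigma,X)|\ll y^{1/2-\sigma}$. The paper itself gives no argument here, only a citation to Lamzouri--Lester--Radziwi\l\l\ (Lemma 4.1 of \cite{LLR19}); the proof there is exactly this kind of Lipschitz-plus-second-moment computation, so you have in effect reconstructed the cited lemma's proof. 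One small stylistic remark: the orthogonality of the $X(p)^n$ (including across different primes, via independence and the vanishing of $\mbe(X(p)^k)$ for $k\ne0$) deserves the one-line verification you gesture at, and the observation that distinct prime powers give distinct integers is what justifies dominating $\sum_{p^n>y}n^{-2}p^{-2n\sigma}$ by $\sum_{m>y}m^{-2\sigma}$; both points are implicit and fine as written. Your closing remark that the hypothesis on $|u|+|v|$ is only there to make the error term nontrivial is also accurate.
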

	\begin{proof}
		\cite[Lemma 4.1]{LLR19}
	\end{proof}

	\begin{lem}\label{thm:lchix}
		Let $q$ be a prime number, $A\geq 1$ and $1/2<\sigma<1$. Then there exists a constant $c$ such that for any $|u|,|v|\leq c(\log q)^\sigma$, we have 
		\begin{align*}
			&\fq\sum_{\chi({\rm mod} \ q)}\exp\left(\mmi u\RE\log L(\sigma,\chi)+\mmi v\IM \log L(\sigma,\chi)\right)\\
			=&\mbe\left(\exp\left( \mmi u\RE \log L(\sigma,X)+\mmi v\IM \log L(\sigma,X)\right)\right)+O((\log q)^{-A}).
		\end{align*}
	\end{lem}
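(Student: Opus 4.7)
The plan is to start with the character average, approximate $\log L(\sigma,\chi)$ by $R_y(\sigma,\chi)$ via Lemma~\ref{l8}, transfer to the random model via Lemma~\ref{lem:chiaqc}, and finally swap $R_y(\sigma,X)$ for $\log L(\sigma,X)$ via Lemma~\ref{lem:yuv}. Throughout I would rewrite the real exponent as
\[\mmi u\RE\log L(\sigma,\chi)+\mmi v\IM\log L(\sigma,\chi)=z_1\log L(\sigma,\chi)+z_2\overline{\log L(\sigma,\chi)},\]
with $z_1=(v+\mmi u)/2$ and $z_2=(-v+\mmi u)/2$, so that $|z_1|,|z_2|\le(|u|+|v|)/2\le c(\log q)^\sigma$. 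Set $y=(\log q)^a$ and pick $a=a(\sigma,A)$ large (and $c$ correspondingly small, with $c\le c(\sigma,a)$ from Lemma~\ref{lem:chiaqc}) so that the three quantitative constraints below are all satisfied.

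First, isolate the characters on which Lemma~\ref{l8} fails. Since $|\exp(\mmi u\RE\log L+\mmi v\IM\log L)|\le 1$, their contribution to the normalized sum is bounded by $q^{-(4\sigma-2)/(7-2\sigma)}(\log q)^{a+14}$, which is smaller than any negative power of $\log q$. On the remaining characters Lemma~\ref{l8} yields $\log L(\sigma,\chi)=R_y(\sigma,\chi)+O\bigl(y^{(1-2\sigma)/4}(\log y)^2\log q\bigr)$; using $|\mme^{\mmi\alpha}-\mme^{\mmi\beta}|\le|\alpha-\beta|$ for real $\alpha,\beta$, replacing $\log L(\sigma,\chi)$ by $R_y(\sigma,\chi)$ in the exponent incurs an error at most $(|u|+|v|)\cdot y^{(1-2\sigma)/4}(\log y)^2\log q\ll(\log q)^{1+\sigma-a(2\sigma-1)/4+o(1)}$, which is $\ll(\log q)^{-A}$ as soon as $a>4(1+\sigma+A)/(2\sigma-1)$.

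Next, with the sum now of the form $\fq\sum_{\chi\text{ good}}\exp(z_1R_y(\sigma,\chi)+z_2\overline{R_y(\sigma,\chi)})$, I would add the Lemma~\ref{l8}-exceptional characters back at the negligible cost above and apply Lemma~\ref{lem:chiaqc} (with our $z_1,z_2$) to obtain
\[\mbe\bigl(\exp(z_1R_y(\sigma,X)+z_2\overline{R_y(\sigma,X)})\bigr)+O\bigl(\exp(-c_3\log q/\log_2 q)\bigr),\]
noting that the characters in $\mca_q$ are absorbed into this exponentially small error via Lemma~\ref{lem:AqB}. Finally, Lemma~\ref{lem:yuv} applies because $|u|+|v|\le 2c(\log q)^\sigma\le y^{\sigma-1/2}$ by our choice of $a$, and replaces $R_y(\sigma,X)$ by $\log L(\sigma,X)$ at the cost of $O((|u|+|v|)/y^{\sigma-1/2})\ll(\log q)^{\sigma-a(2\sigma-1)/2}$, again $\ll(\log q)^{-A}$ once $a$ is large enough.

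The main obstacle is not analytic — Lemma~\ref{lem:chiaqc} already does the heavy lifting — but bookkeeping. One must verify that a single $a=a(\sigma,A)$ simultaneously fulfils the hypotheses of Lemmas~\ref{l8}, \ref{lem:chiaqc} and \ref{lem:yuv} while forcing each of the three approximation losses $(|u|+|v|)y^{(1-2\sigma)/4}(\log y)^2\log q$, $\exp(-c_3\log q/\log_2 q)$ and $(|u|+|v|)y^{-(\sigma-1/2)}$ below $(\log q)^{-A}$. The first and third are polynomial in $\log q$ with exponents linear in $a$, reducing this to two inequalities of the shape $a\ge C(\sigma,A)$, while the middle term is automatically super-polynomially small.
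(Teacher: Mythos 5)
Your proposal is correct and follows essentially the same route as the paper: choose $y=(\log q)^a$ with $a$ large enough (the paper fixes $a=\frac{4A+8}{2\sigma-1}$), approximate $\log L(\sigma,\chi)$ by $R_y(\sigma,\chi)$ on the complement of the Lemma~\ref{l8} exceptional set, discard $\mca_q$ via Lemma~\ref{lem:AqB}, apply Lemma~\ref{lem:chiaqc} with $z_1=(\mmi u+v)/2$, $z_2=(\mmi u-v)/2$, and close with Lemma~\ref{lem:yuv}. Your more explicit accounting of the factor $(|u|+|v|)$ incurred when replacing $\log L$ by $R_y$ inside the exponential is a minor improvement in rigor over the paper's presentation, but the argument is the same.
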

	
	\begin{proof}
		Choose $y=(\log q)^{\frac{4A+8}{2\sigma-1}}$, then Lemma \ref{l8} shows that there is a set $\mcb_q$ with $\# \mcb_q\ll q^{\frac{9-6\sigma}{7-2\sigma}}(\log q)^{\frac{4A+8}{2\sigma-1}+14}$ such that 
		\[\log L(\sigma,\chi)=R_y(\sigma,\chi)+O((\log q)^{-A}),\qquad \forall \chi\in \mcb_q^c.\]
		Note that $\frac{9-6\sigma}{7-2\sigma}<1$, so $\fq\# \mcb_q\ll (\log q)^{-A}$.
		So we have
		\begin{align*}
			&\fq\sum_{\chi({\rm mod}\  q)}\exp\left(\mmi u\RE\log L(\sigma,\chi)+\mmi v\IM \log L(\sigma,\chi)\right)\\
			=&\fq\sum_{\chi \in \mcb_q^c}\exp\left(\mmi u\RE R_y(\sigma,\chi)+\mmi v\IM R_y(\sigma,\chi)\right)+O\left((\log q)^{-A}\right)\\
			=&\fq\sum_{\chi ({\rm mod}\  q)}\exp\left(\mmi u\RE R_y(\sigma,\chi)+\mmi v\IM R_y(\sigma,\chi)\right)+O\left((\log q)^{-A}\right)\\
			=&\fq \sum_{\chi\in \mca_q^c}\exp\left(\mmi u\RE R_y(\sigma,\chi)+\mmi v\IM R_y(\sigma,\chi)\right)+O\left((\log q)^{-A}\right).
		\end{align*}
		The last equality follows from Lemma \ref{lem:AqB} and $\exp\left(-\frac{c_2(\sigma)\log q}{\log_2 q}\right)\ll(\log q)^{-A}$. On the other hand, by taking $z_1=(\mmi u+v)/2,z_2=(\mmi u- v)/2$ in Lemma \ref{lem:chiaqc}, the quantity above equals to 
		\[\mbe(\exp(\mmi u\RE R_y(\sigma,X)+\mmi v\IM R_y(\sigma,X)))+O((\log q)^{-A}).\]
		Then this theorem just follows from Lemma \ref{lem:yuv}.
	\end{proof}
	
	Finally, we give the analog of Lemma \ref{thm:lchix} for $\sigma=1$.
	\begin{lem}\label{lem:laml1}
		Let $q$ be a large prime number. Then uniformly for all complex numbers $z_1,z_2$ in the region $|z_1|,|z_2|\leq  \frac{\log q}{50(\log_2 q)^2}$, we have
		\begin{align*}
			&\fq\sum_{\chi({\rm mod} \ q)}\exp\left(\mmi u\RE\log L(1,\chi)+\mmi v\IM \log L(1,\chi)\right)\\
			=&\mbe\left(\exp\left( \mmi u\RE \log L(1,X)+\mmi v\IM \log L(1,X)\right)\right)+O\left(\exp\left(-\frac{\log q}{2\log_2 q}\right)\right).
		\end{align*}
	\end{lem}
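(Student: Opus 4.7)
The plan is to mirror the proof of Lemma~\ref{thm:lchix}, producing along the way the $\sigma=1$ analogues of Lemmas~\ref{lem:1qchi2k}, \ref{lem:AqB}, \ref{lem:chiaqc} and \ref{lem:yuv}. First I would fix $y=(\log q)^a$ for a suitable large constant $a\ge 1$, apply Lemma~\ref{l6} with $\sigma=1$ and $\sigma_0=1-c_0/\log_2 q$ for a small $c_0>0$, and combine with the zero-density bound Lemma~\ref{l7} to conclude that, outside an exceptional set $\mcb_q$ of density $\ll\exp(-\log q/\log_2 q)$,
\[\log L(1,\chi)=R_y(1,\chi)+O\!\left(\exp\!\left(-\tfrac{\log q}{3\log_2 q}\right)\right).\]

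Next I would establish the moment bound
\[\frac{1}{q}\sum_{\chi\,({\rm mod}\,q)}|R_y(1,\chi)|^{2k}\ll(C\log_2 q)^{2k},\qquad 1\le k\le \frac{\log q}{2a\log_2 q},\]
by applying Lemma~\ref{lem:psigma2k} together with $\sum_{p\le y}p^{-1}\ll\log_2 q$. A Markov-type consequence then gives the analogue of Lemma~\ref{lem:AqB}: the exceptional set $\mca_q':=\{\chi:|R_y(1,\chi)|\ge K_0\log_2 q\}$ has density $\ll \exp(-\log q/\log_2 q)$, provided $K_0$ is chosen sufficiently large in terms of $a$. With these inputs I would Taylor-expand both exponentials up to order $N=\lfloor\log q/(2a\log_2 q)\rfloor$; for indices with $k+\ell\le N$ the orthogonality identity Lemma~\ref{lem:rykl} converts the character average into the random expectation (the condition $y^{k+\ell}\le q$ being the same as in the $\sigma<1$ case), and for indices $k+\ell>N$ the good-set bound $|R_y(1,\chi)|\le K_0\log_2 q$ together with a Lemma~\ref{lem:zct}-type inequality yields a tail bounded by $\mme^{-N}\ll\exp(-\log q/(c\log_2 q))$, provided $|z_j|\cdot K_0\log_2 q\le N/(2\mme)$, which is exactly the hypothesis $|z_j|\le \log q/(50(\log_2 q)^2)$. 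The contribution of $\mca_q'\cup\mcb_q$ is handled by Cauchy--Schwarz plus the moment bound above, and an analogue of Lemma~\ref{lem:yuv} for $\sigma=1$, based on almost-sure convergence of the Euler product for $L(1,X)$ and a second-moment estimate for the tail $\sum_{p>y}X(p)/p$, lets me replace $R_y(1,X)$ by $\log L(1,X)$ on the random side at a cost of $O(\exp(-\log q/\log_2 q))$.

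The main obstacle, compared with the $1/2<\sigma<1$ case, is the size of $R_y(1,\chi)$: whereas in the strip the typical value is of order one, on the $1$-line it can be as large as $\log_2 q$ on generic characters. This single additional $\log_2 q$ factor enters the argument twice, once as the deterministic bound on $|R_y(1,\chi)|$ over the good set and once through the Taylor-truncation order $N\asymp\log q/\log_2 q$, which together force the quadratic loss $(\log_2 q)^2$ in the admissible range of $|z_j|$. All other steps are direct analogues of the $\sigma<1$ argument.
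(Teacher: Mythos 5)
The paper does not actually prove this lemma: it is quoted verbatim from Lamzouri, \cite[Theorem 9.2]{La2008}. So you are attempting an independent reconstruction, and the overall blueprint (mirror the $\sigma<1$ proof, build $\sigma=1$ analogues of Lemmas~\ref{lem:1qchi2k}--\ref{lem:yuv}) is sensible. However, two of the quantitative steps in your sketch are wrong in ways that matter.

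First, the opening step fails. With $\sigma=1$, $\sigma_0=1-c_0/\log_2 q$ and $y=(\log q)^a$, Lemma~\ref{l6} gives $\sigma_1-\sigma_0\asymp 1/\log y=1/(a\log_2 q)$ and $\sigma_1-\sigma\asymp -c_0/\log_2 q$, so $y^{\sigma_1-\sigma}=\exp\bigl(a\log_2 q\cdot(\sigma_1-1)\bigr)\asymp\exp(1-ac_0)$ is a \emph{constant}, and the error term is $\asymp\log q\,(\log_2 q)^2$, which is enormous rather than $O(\exp(-\log q/(3\log_2 q)))$. The exponential savings you assert does not materialize: to make $y^{\sigma_1-\sigma}$ small with that choice of $\sigma_0$ you would need $\log y\gg (\log_2 q)^2$, i.e., $y$ super-polynomial in $\log q$, which then wrecks the orthogonality constraint $k+\ell\le\log q/\log y$ in Lemma~\ref{lem:rykl}. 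Conversely, the paper's own Lemma~\ref{l8} (which already covers $\sigma=1$, with $\sigma_0=3/4$) only yields an approximation error $(\log q)^{-A}$ for any fixed $A$, still far larger than the claimed $\exp(-\log q/(2\log_2 q))$.

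Second, your good-set threshold is vacuous. By Mertens, $|R_y(1,\chi)|\le\sum_{p\le y}p^{-1}+O(1)=\log_3 q+O_a(1)$ \emph{for every} $\chi$ when $y=(\log q)^a$, so the set $\{\chi:|R_y(1,\chi)|\ge K_0\log_2 q\}$ is empty for large $q$ and carries no information. The correct scale is $\log_3 q$, as in the paper's own Lemma~\ref{lem:aqsigma1}. This in turn invalidates your explanation of where the $(\log_2 q)^2$ in the hypothesis $|z_j|\le\log q/(50(\log_2 q)^2)$ comes from: the product ``$\log_2 q$ from the sup-norm of $R_y$ times $\log_2 q$ from the truncation order'' is not what is actually happening, since the sup-norm is $\log_3 q$. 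Because of these two issues, your outlined route does not reach either the stated error $O(\exp(-\log q/(2\log_2 q)))$ or the stated admissible $z$-range, and does not recover Lamzouri's Theorem 9.2.
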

	\begin{proof}
		See \cite[Theorem $9.2$]{La2008}
	\end{proof}

	\section{\blue{Distribution of Random models:  Proof of Theorem \ref{thm1.1}}}\label{chap3}
	\subsection{\blue{When $\frac12<\sigma<1$}}
	In this subsection, we will prove the first part of Theorem \ref{thm1.1}. We first give some basic asymptotic properties of Bessel function. The modified Bessel function of first kind is defined to be 
	$$I_0(u):=\sum_{n\ge0}\frac{(u/2)^{2n}}{n!^2}.$$ Let $f(u):=\log I_0(u)$. We have the following properties for $f$ and its derivatives.
	\begin{lem}\label{lem:Bessel2} 
		For $0\leq u\leq 1$, we have
		$$
		f^{(m)}(u) \asymp \begin{cases} u^2 &\text{ if } m=0\\ u &\text{ if } m\geq 1\text{ and $m$ odd}\\ 1 &\text{ if } m\geq 1 \text{ and $m$ even}.\end{cases}
		$$
		And when $u\gg 1$, we have 
		\[f^{(m)}(u)=\begin{dcases} u+O(\log u)& \text{ if } m=0\\ 1+O(u^{-1}) & \text{ if } m=1\\ (-1)^m\frac{(m-1)!}{2}u^{-m}+O(u^{-m-1})&  \text{ if } m\geq 2. \end{dcases}\]
	\end{lem}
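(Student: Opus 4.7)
My plan is to handle the two regimes separately, using different expansions in each: the Taylor series of $I_0$ for $0 \le u \le 1$, and the classical large-argument (Hankel-type) asymptotic expansion $I_0(u) = \frac{e^u}{\sqrt{2\pi u}}\bigl(1 + \frac{1}{8u} + O(u^{-2})\bigr)$ for $u\gg 1$.

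In the small-$u$ regime, a direct computation from the defining series gives
\[ f(u) = \log I_0(u) = \frac{u^2}{4} - \frac{u^4}{64} + \frac{u^6}{576} - \cdots. \]
Because $I_0$ is an even entire function that is strictly positive on $\mbr$, $f$ is real-analytic and even, so $f^{(m)}$ has parity $(-1)^m$: for odd $m$ it vanishes at $0$ and starts with a linear term, while for even $m\ge2$ it takes a nonzero value at $0$. To upgrade these infinitesimal statements to the claimed $\asymp$ bounds on the compact interval $[0,1]$, two things need to be checked: (i) the leading Taylor coefficient of each $f^{(m)}$ at $0$ is nonzero, and (ii) $f^{(m)}$ has no further zeros on $(0,1]$. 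Both can be deduced by an inductive argument using the Riccati equation
\[ f''(u) + (f'(u))^2 + \frac{f'(u)}{u} = 1, \]
itself obtained from the Bessel ODE $uI_0'' + I_0' - uI_0 = 0$; combined with continuity of $f^{(m)}(u)/u^{\varepsilon}$ on $[0,1]$ (with $\varepsilon = 2$ for $m=0$, $\varepsilon=1$ for odd $m$, and $\varepsilon=0$ for even $m\ge 2$), this yields the desired $\asymp$ inequalities.

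In the large-$u$ regime, taking the logarithm of the Hankel expansion gives the differentiable asymptotic
\[ f(u) = u - \tfrac12 \log(2\pi u) + \tfrac{1}{8u} + O(u^{-2}), \]
which handles $m=0$ at once. For $m=1$, I use $f'(u) = I_1(u)/I_0(u)$ together with the explicit quotient expansion $I_1(u)/I_0(u) = 1 - \tfrac{1}{2u} - \tfrac{1}{8u^2} + O(u^{-3})$. For $m \ge 2$, I differentiate termwise $m$ times: the $u$ term vanishes, the $-\tfrac12\log u$ piece contributes precisely $\tfrac{(-1)^m(m-1)!}{2}\,u^{-m}$, the $\tfrac{1}{8u}$ piece contributes $O(u^{-m-1})$, and the $O(u^{-2})$ remainder, differentiated $m-1$ times, contributes $O(u^{-m-1})$.

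The main obstacle is justifying this last step, i.e.\ that the asymptotic expansion of $I_0$ (and of $I_1/I_0$) may be differentiated term by term with the predicted $O(u^{-m-1})$ error. The most efficient way to establish this is to work with $f'(u) = I_1(u)/I_0(u)$ and apply the saddle-point analysis to the integral representations $I_\nu(u) = \tfrac{1}{\pi}\int_0^\pi e^{u\cos\theta}\cos(\nu\theta)\,d\theta$; the remainder after finitely many terms has its own integral representation, and differentiation under the integral gives the improved error bound. Alternatively, one can bootstrap inductively via the Riccati equation $f'' = 1 - f'/u - (f')^2$, expressing $f^{(m+1)}$ as a polynomial in $f', f'',\ldots, f^{(m)}$ and $1/u$, so that the expansions for lower derivatives propagate to higher ones with the sharp error order.
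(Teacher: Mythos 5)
The paper states this lemma without proof (it closely parallels Lemma~3.1 of \cite{LLR19}), so there is no in-paper argument to compare against. Your large-$u$ half is sound: the Hankel expansion gives $f(u)=u-\tfrac12\log(2\pi u)+\tfrac1{8u}+O(u^{-2})$, and either of the two mechanisms you suggest for differentiating it --- differentiating under the integral sign in $I_\nu(u)=\tfrac1\pi\int_0^\pi \mme^{u\cos\theta}\cos(\nu\theta)\,\mmd\theta$, or bootstrapping through the Riccati relation $f''=1-f'/u-(f')^2$ coming from the Bessel ODE --- does produce $(-1)^m\tfrac{(m-1)!}{2}u^{-m}+O(u^{-m-1})$ for $m\ge2$.

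The small-$u$ half, however, has a genuine gap. You correctly isolate the two things to verify, but the assertion that ``$f^{(m)}$ has no further zeros on $(0,1]$'' is left unproved, and your suggestion that it ``can be deduced by an inductive argument using the Riccati equation'' does not obviously go through: the Riccati recursion does not propagate sign information about higher derivatives in any transparent way. The claim is moreover numerically borderline. Iterating $f''=1-f'/u-(f')^2$, $f'''=-f''/u+f'/u^2-2f'f''$, etc., with $I_0(1)\approx 1.2661$ and $I_1(1)\approx 0.5652$, one finds $f^{(4)}(1)\approx -0.011$, whereas $f^{(4)}(0)=-3/8$ and $f^{(4)}(u)\sim 3u^{-4}>0$ as $u\to\infty$; so $f^{(4)}$ changes sign somewhere beyond $u=1$ and already nearly vanishes at $u=1$. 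Thus nonvanishing on $(0,1]$ is not a soft consequence of the local Taylor data, and your outline needs a real argument here --- a sharper ODE/monotonicity analysis, or a finite numerical verification over the bounded range of $m$ that is actually invoked. It is also worth noting that the applications in this paper (Propositions~\ref{prop:saddle1} and~\ref{prop:saddle2}) use only the upper bounds on $[0,1]$ together with the $\asymp$ behaviour as $u\to 0$, so the lemma as stated is somewhat stronger than what is needed downstream.
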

	For $z\in \mathbb{C}$, let us put
	\[
	M(z)\coloneqq\log \mbe(|L(\sigma,X)|^z).
	\]
	For any integer $m\geq 0$ and prime number $p$, we define
	$$M_p^{(m)}(z)\coloneqq\frac{\partial^m}{\partial z^{m}}\log \mbe\left(\Big|1-\frac{X(p)}{p^{\sigma}}\Big|^{-z}\right).$$
	
	To prove Theorem \ref{thm1.1}, we need to know the asymptotic property of $M_p^{(m)}$ for every prime number $p$. When $p$ is much larger compared to $\kappa$, the asymptotic behavior of $M_p^{(m)}$ relies on the first few terms in $p^{-\sigma}$. To be precise, we have
	\begin{lem}\label{lem:pzsigma}
		Let $1/2<\sigma\leq 1$ and $\kappa$ be a real positive number. Suppose that $p$ is a large prime number with $\kappa<p^{(1+\varepsilon)\sigma}$ for certain $\varepsilon>0$, then for any integer $m\geq 0$, we have 
		\[M_p^{(m)}(\kappa)=p^{-m\sigma}f^{(m)}\left(\frac{\kappa}{p^\sigma}\right)+O\left(\frac{1}{p^{(m+1-\varepsilon)\sigma}}\right).\]
		When $m=0$, we have 
		\[M_p(\kappa)=f\left(\frac{\kappa}{p^\sigma}\right)+O\left(\frac{\kappa}{p^{2\sigma}}\right),\]
		where all the implicit constants depend only on $\sigma$ and $\epsilon$. 
	\end{lem}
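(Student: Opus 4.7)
The approach is to parameterize $X(p) = e^{i\theta}$ with $\theta$ uniform on $[0,2\pi)$, isolate the leading $\cos\theta$ contribution to the exponent, and identify the main factor as a Bessel integral. Setting $r := p^{-\sigma}$ and using the Taylor expansion
\[
\log|1 - re^{i\theta}|^{-1} = \sum_{n\geq 1}\frac{r^n\cos(n\theta)}{n} = r\cos\theta + A(\theta),\qquad |A(\theta)| \leq -\log(1-r) - r \ll r^2,
\]
I write
\[
F_p(z) := \mbe\bigl(|1 - X(p)/p^\sigma|^{-z}\bigr) = \frac{1}{2\pi}\int_0^{2\pi} e^{z r\cos\theta}\, e^{z A(\theta)}\, d\theta,\qquad M_p(z) = \log F_p(z).
\]
Comparison with the integral representation $I_0(u) = \frac{1}{2\pi}\int_0^{2\pi}e^{u\cos\theta}\, d\theta$ should then identify $F_p(z)$ as a small perturbation of $I_0(zr) = e^{f(zr)}$.

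For the $m=0$ estimate, the hypothesis (with $\varepsilon<1$, which I may assume) gives $\kappa r^2 \ll p^{(\varepsilon-1)\sigma}=o(1)$, so the expansion $e^{\kappa A(\theta)} = 1 + O(\kappa r^2)$ is uniform in $\theta$; integrating and invoking the Bessel integral gives $F_p(\kappa) = I_0(\kappa r)\bigl(1 + O(\kappa r^2)\bigr)$, and taking logs (legitimate since $I_0(\kappa r) > 0$ on the real axis) produces $M_p(\kappa) = f(\kappa/p^\sigma) + O(\kappa/p^{2\sigma})$.

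For $m\geq 1$, introduce $H(z) := M_p(z) - f(z/p^\sigma)$ and apply Cauchy's integral formula on the circle $\mathscr C := \{z : |z-\kappa| = p^\sigma\}$:
\[
H^{(m)}(\kappa) = \frac{m!}{2\pi i}\oint_{\mathscr C}\frac{H(z)}{(z-\kappa)^{m+1}}\, dz,\qquad |H^{(m)}(\kappa)| \leq m!\, p^{-m\sigma}\max_{z\in \mathscr C}|H(z)|.
\]
The previous analysis extends to complex $z$: expanding $e^{zA(\theta)}$ and using $\frac{1}{2\pi}\int_0^{2\pi} e^{u\cos\theta}\cos(n\theta)\, d\theta = I_n(u)$ together with the uniform bound $|I_n(u)| \leq I_0(|u|)$ yields $F_p(z) - I_0(zr) \ll |z|r^2\, I_0(|zr|)$. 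On $\mathscr C$ one has $|z|r^2 \leq (\kappa + p^\sigma)r^2 \ll p^{(\varepsilon-1)\sigma}$, so provided $|I_0(zr)|$ is comparable to $I_0(|zr|)$, one obtains $|H(z)| \ll p^{(\varepsilon-1)\sigma}$ on $\mathscr C$ and hence $|H^{(m)}(\kappa)| \ll_m p^{-(m+1-\varepsilon)\sigma}$, as claimed.

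The main obstacle is the lower bound $|I_0(zr)| \gtrsim I_0(|zr|)$ along $\mathscr C$. Since $zr$ parameterizes the unit circle $\{\kappa r + e^{i\phi}\}$, and the zeros of $I_0$ lie on the imaginary axis at $\pm i j_{0,k}$ with $j_{0,1}\approx 2.4$, a distance computation shows $\mathscr C$ stays at distance $\geq j_{0,1}-1 > 0$ from any zero; when $\kappa r$ is bounded, compactness gives $|I_0(zr)|\gtrsim 1$, and when $\kappa r \gg 1$ the asymptotic $I_0(u) \sim e^u/\sqrt{2\pi u}$ valid in $|\arg u| < \pi/2$ applies on $\mathscr C$ (since $\mathrm{Re}(zr) = \kappa r + \cos\phi > 0$). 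A direct expansion $|zr| - \mathrm{Re}(zr) = O(1/(\kappa r))$ then shows $I_0(|zr|)/|I_0(zr)| = 1 + o(1)$, closing the estimate.
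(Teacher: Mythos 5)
Your proof is correct but takes a genuinely different route. The paper computes, for each $m$ separately, the $z$-derivative $\mbe^{(m)}:=\tfrac{\partial^m}{\partial z^m}\mbe\bigl(|1-X(p)p^{-\sigma}|^{-z}\bigr)\big|_{z=\kappa}$ as a real integral, isolating the $\cos^m\theta$ factor to identify it with $p^{-m\sigma}I_0^{(m)}(\kappa/p^\sigma)$ up to the stated error, and then invokes Fa\`a di Bruno's formula for $\log$ to convert these estimates on $\mbe^{(m)}/\mbe$ into the claimed estimate on $M_p^{(m)}=(\log\mbe)^{(m)}$. You instead prove only the $m=0$ statement, but for complex $z$ on a disk of radius $p^\sigma$ around $\kappa$, obtaining $|M_p(z)-f(z/p^\sigma)|\ll p^{(\varepsilon-1)\sigma}$ there, and then read off the derivative bounds from the Cauchy estimate; this entirely avoids the combinatorics of Fa\`a di Bruno, and the absolute error $p^{-(m+1-\varepsilon)\sigma}$ drops out automatically from the factor $p^{-m\sigma}$ in the Cauchy bound. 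The price you pay is the complex-analytic input: you must certify that $I_0(w)$ (and hence $F_p$) is zero-free and that $|I_0(w)|\gtrsim I_0(|w|)$ on the image disk $\{|w-\kappa r|\le 1\}$, which you do correctly via the location of the zeros $\pm \mmi j_{0,k}$ of $I_0$ and the uniform large-argument asymptotic in the right half-plane. Both arguments are sound; yours is cleaner once the lower bound on $|I_0|$ is in hand, and has the side benefit of proving analytic control of $M_p$ in a full complex neighborhood rather than only at real $\kappa$, which is in fact implicitly needed elsewhere in the paper (e.g.\ Lemma \ref{lem:Mp2}).

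One small point worth making explicit in a write-up: the Cauchy formula requires $H=M_p-f(\cdot/p^\sigma)$ to be analytic on the \emph{closed} disk, not merely bounded on its boundary, so the zero-freeness of $F_p$ and $I_0(\cdot\, r)$ and the comparability $|I_0(w)|\gtrsim I_0(|w|)$ need to be verified on all of $\{|z-\kappa|\le p^\sigma\}$; your distance computation to the zeros of $I_0$ and the bound $|z|r^2\ll p^{(\varepsilon-1)\sigma}$ do in fact hold throughout the closed disk, so this is only a matter of stating it, not a gap.
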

	
	\begin{proof}
		Suppose first $m\geq 1$. Since $\kappa<p^{(1+\epsilon)\sigma}$, we have 
		\begin{align*}
			\mbe^{(m)}\left(|1-X(p)p^{-\sigma}|^{-\kappa}\right)&=\frac{1}{2\pi}\int_0^{2\pi}|1-\mme^{\mmi \theta}p^{-\sigma}|^{-\kappa}\log^m |1-\mme^{\mmi \theta}p^{-\sigma}|^{-1}\mmd \theta\\
			&=\frac{1}{2\pi}\int_0^{2\pi}\left(1-\frac{2\cos\theta}{p^{\sigma}}+\frac{1}{p^{2\sigma}}\right)^{-\frac{\kappa}{2}}(-2)^{-m}\log^m\left(1-\frac{2\cos\theta}{p^{\sigma}}+\frac{1}{p^{2\sigma}}\right)\mmd\theta\\
			&=\frac{1}{2\pi}\int_0^{2\pi}\exp\left(\frac{\kappa\cos\theta}{p^{\sigma}}+O\Big(\frac{\kappa}{p^{2\sigma}}\Big)\right)\left(\frac{\cos^m\theta}{p^{m\sigma}}+O\Big(\frac{1}{p^{(m+1)\sigma}}\Big)\right)\mmd \theta\\
			&=\frac{1}{2\pi}\int_0^{2\pi}\exp\left(\frac{\kappa\cos\theta}{p^{\sigma}}\right)\frac{\cos^m\theta}{p^{m\sigma}}\,\mmd \theta\left(1+O\Big(\frac{1}{p^{(m+1-\varepsilon)\sigma}}\Big)\right)\\
			&=p^{-m\sigma}I_0^{(m)}\left(\frac{\kappa}{p^{\sigma}}\right)\left(1+O\Big(\frac{1}{p^{(m+1-\varepsilon)\sigma}}\Big)\right).
		\end{align*}
		Therefore
		$$\gamma_{m}\coloneqq \frac{1}{m!}\frac{\mbe^{(m)}}{\mbe}=\frac{1}{m!}\frac{f^{(m)}(\kappa/p^{\sigma})}{f(\kappa/p^{\sigma})}+O\left(\frac{1}{p^{(m+1-\varepsilon)}}\right).$$
		Then by applying Faà di Bruno's formula, we have 
		\begin{align*}
			M_p^{(m)}(\kappa) &=\sum_{\substack{i_1\geq 0,\dots ,i_N\geq 0\\1i_1+\dots+Ni_N=m}} (-1)^{i_1+\dots+i_N-1} m!(i_1+\dots+i_N-1)!\frac{\gamma_{1}^{i_1}\dots\gamma_{N}^{i_N}}{i_1!\dots i_N!}\\
			&=p^{-m\sigma}\left(\log I_0\right)^{(m)}\left(\frac{\kappa}{p^\sigma}\right)+O\left(\frac{1}{p^{(m+1-\varepsilon)\sigma}}\right).
		\end{align*}
		The evaluation of $M_p$ in the case $m=0$ is similar.
	\end{proof}
	
	When $p$ is smaller, for every single $M_p^{(m)}$, the asymptotic expansion can be deduced from Watson's lemma. By definition, we have 
	\begin{align*}
		\mbe^{(m)}\left(|1-X(p)p^{-\sigma}|^{-\kappa}\right)&=\frac{1}{2\pi}\int_0^{2\pi}|1-\mme^{\mmi \theta}p^{-\sigma}|^{-\kappa}\log^m|1-\mme^{\mmi \theta}p^{-\sigma}|^{-1}\mmd \theta\\
		&=\frac{(-1)^m}{2^m\pi}\int_0^{\frac{\pi}{2}}\mme^{-\frac{\kappa}{2}g(\theta)}g^m(\theta)\mmd\theta+O\left(\mme^{-\kappa^{\varepsilon}}\right),
	\end{align*}
	where $g(\theta)=\log\,(1-\frac{2\cos\theta}{p^\sigma}+\frac{1}{p^{2\sigma}}/2)$. Let $G(\theta)=g(\theta)-g(0)$. It is easy to see that $G(\theta)$ is monotonically increasing in the interval $[0,\frac{\pi}{2}]$ and reaches its maximal value $A=G(\frac{\pi}{2})$ at $\theta=\frac{\pi}{2}$.  Then for $t\in [0,A(a)]$ we have 
	\[\varphi(t):=\frac{1}{G'(G\inv(t))}=\frac{\mme^{\frac{t}{2}}\sinh\frac{a}{2}}{\sqrt{\cosh a-\cosh a\cosh t+\sinh t}},\]
	where $a=\sigma\log p\in (\frac{1}{2}\log 2,\infty)$ and $A(a)=\log \cosh a -\log (\cosh a-1)$. The function $\varphi(t)$ has the following expansion at $t=0$
	\begin{align*}
		\varphi(t)&=\sinh\frac{a}{2}\, t^{-\frac{1}{2}}+\frac{1}{4}\left(2+\cosh a\right)\sinh\frac{a}{2}\,t^{\frac{1}{2}}+\frac{1}{96}\left(2+3\cosh a\right)^2\sinh\frac{a}{2}\,t^{\frac{3}{2}}\\
		&+\frac{1}{384}\left(15\cosh^3 a+18 \cosh^2 a-4 \cosh a-8\right)\sinh\frac{a}{2}\,t^{\frac{5}{2}}+O\big(t^{\frac{7}{2}}\big).  
	\end{align*}
	
	To give a uniform estimation for $\mbe^{(m)}$ for $p$, one must seek for a global asymptotic expansion. So we need an elaborated estimation on the function $\varphi$.  
	\begin{lem}
		There exists $M>0$ such that the following uniform bound holds 
		\begin{align*}\label{eqn:phi}
			\Big|\varphi(t)-\sinh\frac{a}{2}\, t^{-\frac{1}{2}}-\frac{1}{4}(2+&\cosh a)\sinh\frac{a}{2}\,t^{\frac{1}{2}}-\frac{1}{96}\left(2+3\cosh a\right)^2\sinh\frac{a}{2}\,t^{\frac{3}{2}}\Big|\\
			&\leq \frac{M}{384}\left(15\cosh^3 a+18 \cosh^2 a-4 \cosh a-8\right)\sinh\frac{a}{2}\,t^{\frac{5}{2}},
		\end{align*}
		for all $a\in[\frac{1}{2}\log 2,\infty)$ and $t\in [0,A(a)]$.  
	\end{lem}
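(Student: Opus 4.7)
The plan is to recast the claimed inequality as a Taylor-remainder estimate for the auxiliary function
\[
\psi(t) \;:=\; t^{1/2}\varphi(t) \;=\; \frac{\mme^{t/2}\sinh(a/2)}{\sqrt{Q(t,a)}},\qquad Q(t,a):= \frac{\sinh t-\cosh a\,(\cosh t-1)}{t}.
\]
First I would verify that $\psi$ is real-analytic and positive on $[0,A(a)]$: indeed $tQ(t,a)$ factors as $(\mme^{t}-1)\bigl(1+\cosh a-(\cosh a-1)\mme^{t}\bigr)/(2\mme^{t})$, whose only nonnegative real zeros are $t=0$ and $t=\log\tfrac{1+\cosh a}{\cosh a-1}$, the latter lying strictly beyond $A(a)=\log\tfrac{\cosh a}{\cosh a-1}$. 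A direct power-series expansion of $\mme^{t/2}Q(t,a)^{-1/2}$ then shows that the Taylor coefficient of $\psi$ at $t^{k}$ is precisely the coefficient of $t^{k-1/2}$ in the displayed expansion of $\varphi$ for $k=0,1,2,3$; in particular $\psi'''(0)/6$ equals the stated cubic coefficient $\tfrac{1}{384}(15\cosh^{3}a+18\cosh^{2}a-4\cosh a-8)\sinh(a/2)$.

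Multiplying the claim through by $t^{1/2}$ then rewrites it as
\[
\Bigl|\psi(t)-\psi(0)-\psi'(0)\,t-\tfrac{\psi''(0)}{2}t^{2}\Bigr|\;\le\; \tfrac{M}{6}\,\psi'''(0)\,t^{3},\qquad t\in[0,A(a)],
\]
and Taylor's theorem with Lagrange remainder expresses the left-hand side as $\psi'''(\xi)\,t^{3}/6$ for some $\xi\in(0,t)$. The lemma is therefore equivalent to the uniform comparison
\[
\sup_{a\ge\frac{1}{2}\log 2}\ \sup_{\xi\in[0,A(a)]}\frac{|\psi'''(\xi)|}{\psi'''(0)}\;\le\; M.
\]
One checks directly that $15c^{3}+18c^{2}-4c-8>0$ for every $c\ge\cosh(\tfrac12\log 2)=3/(2\sqrt{2})$, so $\psi'''(0)>0$ throughout the range and the ratio is well-defined.

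The main obstacle is proving this uniform bound on $|\psi'''|/\psi'''(0)$, which I would carry out by splitting into two regimes. For large $a$ one has $A(a)\asymp 1/\cosh a$, hence $\xi\cosh a=O(1)$ on the admissible interval and the series for $Q(\xi,a)$ stays bounded away from $0$. Expanding $\psi'''=\sinh(a/2)\,\partial_{t}^{3}\!\bigl(\mme^{t/2}Q^{-1/2}\bigr)$ via Faà di Bruno's formula and noting that each $t$-derivative of $Q$ contributes at most one extra factor of $\cosh a$, a short bookkeeping argument yields $|\psi'''(\xi)|\ll \cosh^{3}a\cdot\sinh(a/2)$, which matches the asymptotic size of $\psi'''(0)$ up to an absolute constant. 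For $a$ in a bounded range $[\tfrac12\log 2,B]$ the relevant parameter set is compact, $Q$ is continuous and strictly positive there, so $\psi'''(\xi)$ is uniformly bounded above while $\psi'''(0)$ is bounded below by a positive constant depending only on $B$. Patching the two regimes at a suitable threshold yields the required absolute constant $M$.
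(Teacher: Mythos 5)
Your reduction is correct and cleanly set up: multiplying by $t^{1/2}$ converts the claim into a Taylor--Lagrange remainder bound for $\psi(t)=t^{1/2}\varphi(t)=\sinh(a/2)\,g(t)$, so the lemma is equivalent to showing $|\psi'''(\xi)|\le M\,\psi'''(0)$ uniformly in $\xi\in[0,A(a)]$ and $a\ge\tfrac12\log 2$. Your observation that $\psi'''(0)>0$ on the whole range is needed and correct. Where your route diverges from the paper's is in how that uniform bound on $\psi'''$ is obtained. The paper works complex-analytically: it notes $g$ is holomorphic in $|t|<R(a)$ with $R(a)=\log\frac{\cosh a+1}{\cosh a-1}$, that the ratio $A(a)/R(a)$ is monotone decreasing with maximum $\rho=0.811\ldots$ at the left endpoint $a=\tfrac12\log 2$ (so the interval $[0,A(a)]$ sits a \emph{uniform} fraction inside the disk of analyticity), and then one Cauchy estimate on the circle $|t|=\rho_0 R(a)$ with $\rho_0=9/10$ together with a single numerical bound $\max_{|t|=\rho_0 R}|g|<17$ gives $|g^{(3)}(t)|\ll R(a)^{-3}\asymp e^{3a}\asymp g^{(3)}(0)$ uniformly in $a$, with no case split. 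You instead stay real-variable, expanding $\partial_t^3\bigl(\mme^{t/2}Q^{-1/2}\bigr)$ by Fa\`a di Bruno and observing that each derivative of $Q$ is $O(\cosh a)$ while $Q$ itself stays bounded away from $0$ on the interval (because $A(a)\cosh a\to 1$ while the zero of $Q$ sits at $R(a)\cosh a\to 2$), giving $|\psi'''|\ll\cosh^3 a\,\sinh(a/2)$ for large $a$; you then close the bounded-$a$ range by compactness. Both strategies are sound, and the key structural fact --- that the endpoint $A(a)$ keeps a uniform relative distance from the singularity $R(a)$ of $\varphi$ --- appears in both, though the paper turns it into a single Cauchy radius while you use it to keep $Q$ away from zero. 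The paper's complex argument is shorter and avoids the large-$a$/bounded-$a$ dichotomy; your argument is more elementary but defers the third-derivative bookkeeping and the patching of the two regimes to the reader, and to make it fully rigorous you would want to state explicitly that $Q(\xi,a)\ge\tfrac12 - o(1)$ on $[0,A(a)]$ for large $a$ (rather than just ``stays bounded away from $0$'') and spell out the Fa\`a di Bruno term count giving $(Q^{-1/2})^{(3)}=O(\cosh^3 a)$.
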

	\begin{proof}
		The function $g(t)=\csch\frac{a}{2}\,t^{\frac{1}{2}}\varphi(t)$ is analytic in the disk $\{t\in\mbc\,\big|\,|t|<R(a)\}$ where
		$$R(a)=\log\left(\frac{\cosh a+1}{\cosh a-1}\right)\in \big(0,\log\big(17+12\sqrt{2}\big)\big].$$
		The function $A(a)/R(a)$ is monotone decreasing with respect to $a$. It attains its maximal value $\rho=0.811\dots$ when $a=\frac{1}{2}\log 2$. The Cauchy bound of $g(t)$ for $|t|<A(a)$ gives
		$$\big|g^{(3)}(t)\big|\leq \frac{3!\, 2\pi}{(\rho_0-\rho)^{4}}R(a)^{-3}\max_{|t|=\rho_0 R}\big|g(t)\big|.$$
		Take $\rho_0=\frac{9}{10}$ and  we get $\max_{|t|=\rho_0 R}|g(t)|=|g(\rho_0R(a))|<17$ for all $a$. This implies
		$$\big|g^{(3)}(t)\big|\leq 2\times10^{7}R(a)^{-3}\asymp e^{3a}$$
		as $a\to\infty$.
		On the other hand,
		$$\big|g^{(3)}(0)\big|=\frac{1}{384}\left(15\cosh^3 a+18 \cosh^2 a-4 \cosh a-8\right)\asymp e^{3a}.$$
		Thus there exists certain $M>0$ such that $|g^{(3)}(t)| \leq M|g^{(3)}(0)|$.
	\end{proof}
	
	With the above estimations, we are now able to prove
	\begin{lem}\label{lem:Mp01}
		Let $\kappa$ be a large real number. For any prime number $p$ with $\kappa>p^{(1+\varepsilon)\sigma}$ for some certain $\varepsilon>0$, we have
		\begin{align*}
			M_{p}(\kappa)&=-\kappa\log (1-p^{-\sigma})+O(\log \kappa),\\
			M_{p}'(\kappa)&=-\log (1-p^{-\sigma})\left(1+O(\kappa^{-\varepsilon})\right),
		\end{align*}
		where the implicit constants depend only on $\sigma$ and $\epsilon$.
	\end{lem}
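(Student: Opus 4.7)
The plan is to apply Laplace's method using the change of variables $t=G(\theta)=g(\theta)-g(0)$ already set up in the paragraph preceding the lemma, and then invoke Watson's lemma with the uniform expansion of $\varphi(t)$ from the previous lemma. Specifically, using the integral formula displayed in the excerpt and the substitution $t=G(\theta)$, I first rewrite
\[
\mbe^{(m)}\!\left(|1-X(p)p^{-\sigma}|^{-\kappa}\right) = \frac{(-1)^m}{2^m\pi}(1-p^{-\sigma})^{-\kappa}\int_0^{A(a)} \mme^{-\kappa t/2}(g(0)+t)^m\varphi(t)\,\mmd t + O(\mme^{-\kappa^{\varepsilon}}),
\]
using that $\mme^{-\kappa g(0)/2}=(1-p^{-\sigma})^{-\kappa}$. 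This exposes the expected exponential factor in $\kappa$.

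Next, for $m=0$, Watson's lemma with $\int_0^\infty \mme^{-\kappa t/2}t^{-1/2}\,\mmd t=\sqrt{2\pi/\kappa}$ together with the uniform expansion $\varphi(t)=\sinh(a/2)\,t^{-1/2}+O(t^{1/2})$ (whose uniformity in $p$ is exactly what the preceding lemma secures) gives
\[
\int_0^{A(a)}\mme^{-\kappa t/2}\varphi(t)\,\mmd t = \sinh(a/2)\sqrt{\tfrac{2\pi}{\kappa}}\bigl(1+O(\kappa^{-1})\bigr).
\]
Taking logarithms, using $\sinh(a/2)\asymp p^{\sigma/2}$, and appealing to the hypothesis $\log p\le\log\kappa/((1+\varepsilon)\sigma)$ collapses the prefactor into an $O(\log\kappa)$ error, yielding the first estimate $M_p(\kappa) = -\kappa\log(1-p^{-\sigma})+O(\log\kappa)$.

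For $M_p'$ I use $M_p' = \mbe^{(1)}/\mbe^{(0)}$. Splitting the $m=1$ integral via $(g(0)+t)=g(0)+t$ gives
\[
\mbe^{(1)}(\kappa) = -\frac{g(0)}{2}\,\mbe^{(0)}(\kappa) - \frac{(1-p^{-\sigma})^{-\kappa}}{2\pi}\int_0^{A(a)}\mme^{-\kappa t/2}\,t\,\varphi(t)\,\mmd t + O(\mme^{-\kappa^{\varepsilon}});
\]
Watson's lemma shows the remaining integral produces an extra factor $1/\kappa$ relative to the $m=0$ case, so $M_p'(\kappa)=-g(0)/2 + O(\kappa^{-1}) = -\log(1-p^{-\sigma})+O(\kappa^{-1})$. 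To reshape this into the multiplicative form in the statement, I use that for bounded $p$ the quantity $|\log(1-p^{-\sigma})|$ is bounded below by a constant, while for $p$ large one has $|\log(1-p^{-\sigma})|\asymp p^{-\sigma}$ and the hypothesis $p^\sigma<\kappa^{1/(1+\varepsilon)}$ yields $\kappa^{-1}/|\log(1-p^{-\sigma})|\ll\kappa^{-\varepsilon/(1+\varepsilon)}$. Renaming $\varepsilon$ to $\varepsilon/(1+\varepsilon)$ (still positive) delivers the claimed form $M_p'(\kappa)=-\log(1-p^{-\sigma})\bigl(1+O(\kappa^{-\varepsilon})\bigr)$.

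The main obstacle is ensuring that all Watson-type estimates remain uniform in $p$ as $p$ ranges up to $\kappa^{1/((1+\varepsilon)\sigma)}$, where the exponent $a=\sigma\log p$ in $\varphi$ can be large; this is precisely why the previous lemma's bound on the tail of the expansion of $\varphi(t)$ with explicit $a$-dependence (via $\cosh^3 a\sinh(a/2)$) is needed, and its implicit constant $M$ must be absolute. The second delicate point is the reshaping of the additive $O(\kappa^{-1})$ error for $M_p'$ into a multiplicative $O(\kappa^{-\varepsilon})$ error, which is tightest precisely when $p$ saturates the boundary condition $p^{(1+\varepsilon)\sigma}\approx\kappa$.
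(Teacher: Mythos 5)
Your argument follows essentially the same route as the paper's: the change of variable $t=G(\theta)$, then Watson's lemma with the leading term $\sinh(a/2)\,t^{-1/2}$ of $\varphi$, and finally taking $M_p'=\mbe^{(1)}/\mbe^{(0)}$. Your explicit split $\mbe^{(1)}=-\tfrac{g(0)}{2}\mbe^{(0)}-\text{(smaller)}$ followed by the additive-to-multiplicative conversion is a clean presentation of the same mechanism the paper uses implicitly when it writes $\mbe'$ directly in multiplicative form.

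One quantitative slip worth flagging: you state
\[
\int_0^{A(a)}\mme^{-\kappa t/2}\varphi(t)\,\mmd t=\sinh\tfrac{a}{2}\sqrt{\tfrac{2\pi}{\kappa}}\bigl(1+O(\kappa^{-1})\bigr),
\]
but the second coefficient in the Watson expansion of $\varphi$ is $\tfrac{1}{4}(2+\cosh a)\sinh\tfrac{a}{2}$, so the relative error is $O(\cosh a/\kappa)$, which for $p^\sigma$ near $\kappa^{1/(1+\varepsilon)}$ is only $O(\kappa^{-\varepsilon/(1+\varepsilon)})$, not $O(\kappa^{-1})$. This does not affect your conclusion for $m=0$ — taking logarithms still absorbs the $o(1)$ relative error into $O(\log\kappa)$ — and your $m=1$ argument does not rely on the incorrect $O(\kappa^{-1})$ rate (the extra factor of $\kappa^{-1}$ there comes from the ratio of the two Watson leading terms, which is genuinely $\asymp\kappa^{-1}$), but the displayed claim should be corrected. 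Also note that you should say a word about truncating the range $[0,A(a)]$ to justify extending the integral to $\infty$ in Watson's lemma; this is harmless because $\kappa A\gg\kappa^{\varepsilon/(1+\varepsilon)}$, exactly as in the paper's treatment via the incomplete Gamma function.
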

	\begin{proof}
		We have 
		\begin{align*}
			\mbe^{(m)}\left(|1-X(p)p^{-\sigma}|^{-\kappa}\right)
			&=\frac{(-1)^m}{2^m\pi}\int_0^{\frac{\pi}{2}}\mme^{-\frac{\kappa}{2}g(\theta)}g^m(\theta)\mmd \theta+O\left(\mme^{-\kappa^{\varepsilon}}\right)\\
			&=\frac{(-1)^m}{2^m\pi}\int_0^{\frac{\pi}{2}}\mme^{-\frac{\kappa}{2}(G(\theta)+g(0))}(G(\theta)+g(0))^m\mmd \theta+O\left(\mme^{-\kappa^{\varepsilon}}\right)\\
			&=\frac{(-1)^m}{2^m\pi \kappa}\mme^{-\frac{\kappa}{2}g(0)}\int_0^{\frac{1}{2}\kappa A}\mme^{-t}\left(\frac{2t}{\kappa}+g(0)\right)^m\varphi\left(\frac{2t}{\kappa}\right)\mmd t+O\left(\mme^{-\kappa^{\varepsilon}}\right).
		\end{align*}
		
		If $m=0$, since $\cosh a=\frac{p^{\sigma}+p^{-\sigma}}{2}\ll \kappa^{1-\epsilon}$, we have
		\[\mbe\left(|1-X(p)p^{-\sigma}|^{-\kappa}\right)=\frac{1}{\pi \kappa}\mme^{-\frac{\kappa}{2}g(0)}\sinh\frac{a}{2}\left(\left(\frac{2}{\kappa}\right)^{-\frac{1}{2}}\,\gamma\left(\frac{1}{2},\frac{\kappa A}{2}\right)+O\left(\kappa^{-\frac{1}{2}-\epsilon}\right)\right),\]
		where the implicit constant depends only on $\sigma$ and $\varepsilon$.
		Since 
		\[A=\log\frac{1+p^{-\sigma}}{1-p^{-\sigma}}=2p^{-\sigma}+O(p^{-2\sigma})\gg \kappa^{-1-\varepsilon},\]
		so we have $\kappa A\gg \kappa^{\epsilon}$ and 
		\[\gamma\left(\frac{1}{2},\frac{\kappa A}{2}\right)=\Gamma\left(\fot\right)+O\left(\mme^{-\kappa^{\varepsilon}}\right).\]
		Hence
		$$\mbe\left(|1-X(p)p^{-\sigma}|^{-\kappa}\right)=\frac{1}{\sqrt{2\pi}}\sinh \frac{a}{2}\mme^{-\kappa\log(1-p^{-\sigma})}\kappa^{-\frac{1}{2}}\left(1+O(\kappa^{-\varepsilon})\right)$$
		Take logarithm and we get
		\begin{equation}\label{eq:e0xp}
			M_p(\kappa)=-\kappa\log (1-p^{-\sigma})+O(\log \kappa).
		\end{equation}
		
		If $m=1$, in the same manner as Eq. \eqref{eq:e0xp} one has
		\begin{equation}\label{eq:e1xp}
			\mbe'\left(|1-X(p)p^{-\sigma}|^{-\kappa}\right)=-\frac{1}{\sqrt{2\pi}}\sinh \frac{a}{2}\mme^{-\kappa\log(1-p^{-\sigma})}\log(1-p^{-\sigma})\kappa^{-\frac{1}{2}}\left(1+O(\kappa^{-\varepsilon})\right).
		\end{equation}
		This yields
		\[
		M_p'(\kappa)=\frac{\mbe'}{\mbe}=-\log (1-p^{-\sigma})\left(1+O(\kappa^{-\varepsilon})\right).\qedhere
		\]
	\end{proof}
	
	Uniform asymptotic behaviors for higher order derivatives can be also obtained for all smaller $p$ and for all $z$ in a fixed sector.
	\begin{lem}\label{lem:Mp2}
		Let $z\in\C$. Suppose $\arg z\in (-\theta, \theta)$ for certain fixed $0<\theta<\frac{\pi}{2}$. For any prime number $p$ with $|z|>p^{(2+\varepsilon)\sigma}$ for some certain $\varepsilon>0$, we have
		$$ M_{p}^{(m)}(z)=(-1)^m\frac{(m-1)!}{2}z^{-m}+O(z^{-m-\varepsilon}),$$
		where the implicit constant depends only on $\sigma$, $\varepsilon$  and $\theta$.
	\end{lem}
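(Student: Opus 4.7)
The plan is to refine the Laplace-method (Watson's lemma) argument of Lemma \ref{lem:Mp01} to produce a full asymptotic expansion of $M_p(z)$ of the form
\[
M_p(z) = -z\log(1-p^{-\sigma}) - \tfrac{1}{2}\log z + C_p + R_p(z),
\]
uniformly on a sector $\arg z \in (-\theta', \theta')$ slightly larger than $(-\theta, \theta)$, with $|R_p(z)| \ll |z|^{-\varepsilon}$ uniformly for primes $p$ satisfying $|z| > p^{(2+\varepsilon)\sigma}$ and with $C_p$ depending only on $p$ and $\sigma$. Given this, the stated estimate (which is only meaningful for $m\geq 2$, in light of Lemma \ref{lem:Mp01}) follows by differentiating: the linear term in $z$ and the constant $C_p$ vanish, the term $-\tfrac{1}{2}\log z$ contributes exactly $(-1)^m (m-1)!/2 \cdot z^{-m}$, and $R_p^{(m)}$ is controlled by applying Cauchy's integral formula on a disk of radius $c|z|$ (with $c = c(\theta, \theta') > 0$) centered at $z$ and lying inside the larger sector, which yields $|R_p^{(m)}(z)| \leq m!\,(c|z|)^{-m}\sup|R_p| \ll_m |z|^{-m-\varepsilon}$.

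To establish the refined expansion, I would push the Watson-type analysis used in Lemma \ref{lem:Mp01} one step further. The contribution of $\theta \in [\pi/2, \pi]$ is bounded relative to the main term by $(1+p^{-2\sigma})^{-\RE z/2} \ll \exp(-c|z|^{(1+\varepsilon)/(2+\varepsilon)})$ via the hypothesis $|z| > p^{(2+\varepsilon)\sigma}$. On $[0, \pi/2]$, after the change of variable $t = g(\theta) - g(0)$, I would insert the expansion of $\varphi(t)$ from the preceding lemma and integrate term by term using
\[
\int_0^A \mme^{-zt/2} t^{k-1/2}\,\mmd t = \Gamma(k+\tfrac{1}{2})(z/2)^{-k-1/2} + O\bigl(\mme^{-c|z|^{(1+\varepsilon)/(2+\varepsilon)}}\bigr),
\]
which is valid on the sector because $\RE z \geq |z|\cos\theta' > 0$. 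This produces an expansion
\[
\mbe\bigl(|1-X(p)/p^{\sigma}|^{-z}\bigr) = \frac{\sinh(a/2)}{\sqrt{2\pi}}(1-p^{-\sigma})^{-z} z^{-1/2}\Bigl(1 + \sum_{k=1}^{K} c_k(p) z^{-k} + O\bigl((p^{\sigma}/|z|)^{K+1}\bigr)\Bigr),
\]
in which each $c_k(p)$ is a polynomial of degree $k$ in $\cosh a$. Using $p^{\sigma}/|z| < |z|^{-(1+\varepsilon)/(2+\varepsilon)}$ and choosing $K$ large enough that $(K+1)(1+\varepsilon)/(2+\varepsilon) > 1$, taking logarithms then yields the claimed form of $M_p$ with $C_p = \log(\sinh(a/2)\sqrt{2/\pi})$ and $|R_p(z)| \ll |z|^{-\varepsilon}$.

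The main technical obstacle lies in keeping the estimates uniform in $p$. The coefficients of the expansion of $\varphi(t)$ grow like powers of $\cosh a \asymp p^{\sigma}$, so each additional term of the Watson-type expansion costs a factor of $p^{\sigma}/|z|$, and the hypothesis $|z| > p^{(2+\varepsilon)\sigma}$ is precisely what makes this ratio genuinely small (and in fact summable) throughout the relevant range of $p$. A secondary point is verifying that the integral representation extends holomorphically to complex $z$ in the enlarged sector, but this is routine since $\RE z > |z|\cos\theta' > 0$ keeps the integrand exponentially decaying on $[0, A]$ and lets the incomplete-gamma estimates go through verbatim.
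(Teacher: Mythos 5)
Your proposal matches the paper's strategy: push the Watson/Laplace expansion one order further uniformly in $p$ (using $\cosh a\ll p^\sigma<|z|^{1/(2+\varepsilon)}$ to control coefficient growth), extend to a sector, and finish with Cauchy's estimate on a disk of radius $c|z|$. The only difference is organizational: you expand $M_p=\log\mbe$ directly from $\mbe^{(0)}$ alone and differentiate the explicit terms $-z\lambda_p-\tfrac12\log z+C_p$ by hand while Cauchy-bounding the residual $R_p$ from order zero; the paper instead derives $M_p''=\mbe''/\mbe-(\mbe'/\mbe)^2=\tfrac12 z^{-2}+O(z^{-2-\varepsilon})$ from expansions of $\mbe,\mbe',\mbe''$ (exploiting the cancellation of the leading $\lambda_p^2$ term) and then Cauchy-differentiates from order two. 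Your variant is marginally more economical, since only one Watson-type expansion is needed and $(-1)^m(m-1)!/2\cdot z^{-m}$ emerges directly by differentiating $-\tfrac12\log z$. Two minor points: the Cauchy disk $|w-z|=c|z|$ may leave the region $|w|>p^{(2+\varepsilon)\sigma}$ where the expansion was established, so one should prove it under a slightly weaker constraint (say $|w|>p^{(2+\varepsilon/2)\sigma}$) and take $c$ small enough --- a subtlety the paper also elides --- and your $C_p$ should read $\log\bigl(\sinh(a/2)/\sqrt{2\pi}\bigr)$ rather than $\log\bigl(\sinh(a/2)\sqrt{2/\pi}\bigr)$, though this constant is annihilated by the differentiation and therefore immaterial.
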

	\begin{proof}
		In the same way as Lemma~\ref{lem:Mp01} but using $\cosh a=\frac{p^{\sigma}+p^{-\sigma}}{2}\ll |z|^{\frac{1}{2}-\epsilon}$, one gets
		\begin{multline*}
			\mbe''\left(|1-X(p)p^{-\sigma}|^{-z}\right)=\frac{1}{\sqrt{2\pi}}\sinh \frac{a}{2}\mme^{-\lambda_{p}z}\lambda_{p}^{2}\\
			\cdot\left(z^{-\frac{1}{2}}+\frac{1}{4}\left(4+2\lambda_{p}+\lambda_{p}\cosh a\right)\lambda_{p}\,z^{-\frac{3}{2}}+O_{\sigma,\varepsilon,\theta}\left(z^{-2-\varepsilon}\right)\right),
		\end{multline*}
		where $\lambda_p=\log (1-p^{-\sigma})$. Similar asymptotic expansions can also be obtained for $\mbe$ and $\mbe'$. At last, one can get
		$$M_p''(z)=\frac{\mbe''}{\mbe}-\frac{\mbe'^2}{\mbe^2}=\frac{1}{2}z^{-2}+O_{\sigma,\varepsilon,\theta}(z^{-2-\varepsilon}).$$
		Taking derivatives with respect to $z$ and using Cauchy's estimate, we get the desired result for $n>2$.
	\end{proof}
	
	With the above preparation, we are able to determine the asymptotic behaviors of $M(\kappa)$ and its derivatives.
	\begin{prop}\label{prop:saddle1} 
		As the positive real numbers $\kappa\to\infty$, we have
		\begin{equation}\label{Moments1}
			M(\kappa)= \frac{\kappa^{1/\sigma}}{\log \kappa}\left(a_0^{(0)}+\frac{a_1^{(0)}}{\log \kappa}+\dots+\frac{a_N^{(0)}}{(\log \kappa)^N}+O\left(\frac{1}{(\log \kappa)^{N+1}}\right)\right),
		\end{equation}
		where $$ a_n^{(0)}:=\int_0^{\infty} \frac{f(u)(\log u)^n}{u^{1/\sigma+1}}\mmd u,$$
		and
		\begin{equation}\label{Moments2}
			M'(\kappa)= \frac{\kappa^{1/\sigma-1}}{\log \kappa}\left(a_0^{(1)}+\frac{a_1^{(1)}}{\log \kappa}+\dots+\frac{a_N^{(1)}}{(\log \kappa)^N}+O\left(\frac{1}{(\log \kappa)^{N+1}}\right)\right).
		\end{equation}
		where 
		$$ a_n^{(1)}:=\int_0^{\infty} \frac{f'(u)(\log u)^n}{u^{1/\sigma}}\mmd u.$$ 
	\end{prop}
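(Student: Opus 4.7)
The plan is to decompose $M(\kappa)=\sum_p M_p(\kappa)$ and approximate each $M_p(\kappa)$ by $f(\kappa/p^\sigma)$ with a controlled error, using the two regimes established in Lemmas~\ref{lem:pzsigma} and~\ref{lem:Mp01}. Fix a small $\varepsilon>0$ and set $P_0=\kappa^{1/((1+\varepsilon)\sigma)}$. For $p>P_0$, Lemma~\ref{lem:pzsigma} (with $m=0$) directly yields $M_p(\kappa)=f(\kappa/p^\sigma)+O(\kappa/p^{2\sigma})$. For $p\leq P_0$, where $\kappa/p^\sigma\gg 1$, I would combine Lemma~\ref{lem:Mp01} with the expansions $-\log(1-p^{-\sigma})=p^{-\sigma}+O(p^{-2\sigma})$ and (from Lemma~\ref{lem:Bessel2}) $f(u)=u+O(\log u)$ for $u\gg 1$ to deduce $M_p(\kappa)=f(\kappa/p^\sigma)+O(\kappa/p^{2\sigma})+O(\log\kappa)$. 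Summing these errors, $\sum_p\kappa/p^{2\sigma}=O(\kappa)$ (where $\sigma>1/2$ is essential for convergence), and $\sum_{p\leq P_0}\log\kappa=O(\kappa^{1/((1+\varepsilon)\sigma)})$; for any fixed $N$, choosing $\varepsilon$ small ensures both are $o(\kappa^{1/\sigma}/(\log\kappa)^{N+2})$.

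Next, I replace the prime sum $\sum_p f(\kappa/p^\sigma)$ by an integral via the prime number theorem with classical error, writing $\sum_p f(\kappa/p^\sigma)=\int_2^\infty f(\kappa/x^\sigma)\,dx/\log x+O(\text{PNT error})$; integration by parts using $\pi(x)-\mathrm{Li}(x)=O(x\mme^{-c\sqrt{\log x}})$ shows the error is negligible. The substitution $u=\kappa/x^\sigma$, which sends $x\in[2,\infty)$ to $u\in(0,\kappa/2^\sigma]$, transforms the integral into
\[
\kappa^{1/\sigma}\int_0^{\kappa/2^\sigma}\frac{f(u)}{u^{1/\sigma+1}(\log\kappa-\log u)}\,du.
\]

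The heart of the argument is the Taylor expansion
\[
\frac{1}{\log\kappa-\log u}=\frac{1}{\log\kappa}\sum_{n=0}^{N}\left(\frac{\log u}{\log\kappa}\right)^n+O\!\left(\frac{|\log u|^{N+1}}{(\log\kappa)^{N+2}}\right),
\]
valid when $|\log u|\leq c\log\kappa$ for some $c<1$. Integrating term by term, and extending the upper limit to $\infty$ (the tail is negligible because $f(u)/u^{1/\sigma+1}\sim u^{-1/\sigma}$ is integrable at infinity for $\sigma<1$, while near $u=0$, $f(u)\sim u^2/4$ makes $f(u)(\log u)^n/u^{1/\sigma+1}$ integrable for $\sigma>1/2$), one recovers the claimed expansion~\eqref{Moments1} with coefficients $a_n^{(0)}=\int_0^\infty f(u)(\log u)^n u^{-1/\sigma-1}\,du$.

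For the derivative formula~\eqref{Moments2}, the argument is entirely parallel: Lemma~\ref{lem:pzsigma} with $m=1$ gives $M_p'(\kappa)=p^{-\sigma}f'(\kappa/p^\sigma)+O(p^{-(2-\varepsilon)\sigma})$ for $p>P_0$, and Lemma~\ref{lem:Mp01} gives $M_p'(\kappa)=-\log(1-p^{-\sigma})(1+O(\kappa^{-\varepsilon}))$ for small $p$, with both aligning because $f'(u)\to 1$ as $u\to\infty$ and $-\log(1-p^{-\sigma})=p^{-\sigma}+O(p^{-2\sigma})$. The same change of variable transforms $\sum_p p^{-\sigma}f'(\kappa/p^\sigma)$ into $\kappa^{1/\sigma-1}\int_0^{\kappa/2^\sigma}f'(u)/(u^{1/\sigma}(\log\kappa-\log u))\,du$, yielding the expansion with $a_n^{(1)}$. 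The main technical obstacle is bookkeeping the error terms: I must verify that the Taylor remainder, together with the contributions from the extreme ranges of $u$ where the expansion fails (namely $u\lesssim\kappa^{-(1-\delta)}$ and $u\gtrsim\kappa^{1-\delta}$ for a suitable $\delta$), plus the error from the prime-to-integral transition and the threshold $P_0$, all remain below $O(\kappa^{1/\sigma}/(\log\kappa)^{N+2})$ uniformly. The integrability properties of $f(u)u^{-1/\sigma-1}$ at both endpoints make this possible but require care in the boundary estimates.
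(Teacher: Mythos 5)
Your proposal is correct and follows essentially the same route as the paper: split the primes at $P_0=\kappa^{1/((1+\varepsilon)\sigma)}$ via Lemmas~\ref{lem:pzsigma} and~\ref{lem:Mp01}, pass to an integral by the prime number theorem, substitute $u=\kappa/x^\sigma$, expand $1/(\log\kappa-\log u)$ in powers of $\log u/\log\kappa$, and control the endpoint ranges where the expansion degrades. The only cosmetic difference is that you absorb both prime regimes directly into $f(\kappa/p^\sigma)$ at the outset (using $-\log(1-p^{-\sigma})\approx p^{-\sigma}$ and $f(u)=u+O(\log u)$), whereas the paper keeps the small-prime contribution $-\kappa\log(1-p^{-\sigma})$ separate via an auxiliary cutoff function $h(t)$ and merges at the final step; the resulting coefficients $a_n^{(0)},a_n^{(1)}$ coincide.
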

	\begin{proof}
		
		So by combining Lemma \ref{lem:pzsigma}, Lemma~\ref{lem:Mp01} and Prime Number Theorem, we have 
		\begin{equation}\label{eq:mzht}
			\begin{split}
				M(\kappa)+O(\kappa\log \kappa)&=\sum_{p^{\sigma}<\kappa^{1/(1+\varepsilon)}}M_{p}(\kappa)+\sum_{p^{\sigma}>\kappa^{1/(1+\varepsilon)}}M_{p}(\kappa)\\
				&=\sum_{p^{\sigma}<\kappa^{1/(1+\varepsilon)}}-\log(1-p^{-\sigma})\,\kappa+\sum_{p^{\sigma}>\kappa^{1/(1+\varepsilon)}}f(p^{-\sigma}\kappa)\\
				&=-\kappa\int_{\kappa^{-1/(1+\varepsilon)}}^{2^{-\sigma}}\frac{\log(1-t)}{\log t}t^{-\frac{1}{\sigma}-1}\mmd t-\int_0^{\kappa^{-1/(1+\varepsilon)}}\frac{f(t\kappa)}{\log t}t^{-\frac{1}{\sigma}-1}\mmd t\\
				&=-\kappa^{\frac{1}{\sigma}}\int_{\kappa^{\varepsilon/(1+\varepsilon)}}^{2^{-\sigma}\kappa}\frac{\kappa\log(1-t/\kappa)}{\log t-\log \kappa}t^{-\frac{1}{\sigma}-1}\mmd t-\kappa^{\frac{1}{\sigma}}\int_0^{\kappa^{\varepsilon/(1+\varepsilon)}}\frac{f(t)}{\log t-\log \kappa}t^{-\frac{1}{\sigma}-1}\mmd t
			\end{split}
		\end{equation}
		When $0\leq t\leq 2/\kappa$, by Lemma \ref{lem:Bessel2}, we have $f(t)\asymp t^2$, so 
		\[\int_0^{2/\kappa}\frac{f(t)}{\log \kappa-\log t}t^{-\frac{1}{\sigma}-1}\mmd t\asymp \int_0^{2/\kappa}\frac{t^{-\frac{1}{\sigma}+1}}{\log \kappa}\mmd t\asymp\frac{1}{z^{1-\frac{\sigma}{2}}\log \kappa}.\]
		Then a direct expansion with respect to $\log t$ gives  
		\begin{equation}\label{eq:mzhtn}
			M(\kappa)=\frac{\kappa^{1/\sigma}}{\log \kappa}\left(\sum_{n\leq N}\frac{1}{(\log \kappa)^n}\int_0^\infty \frac{h(t)(\log t)^n}{t^{\frac{1}{\sigma}}}\mmd t+O((\log \kappa)^{-N-1})\right),
		\end{equation}
		where 
		\[h(t)=\begin{cases} 1& t\geq \kappa^{\varepsilon/(1+\varepsilon)}\\ t\inv f(t) & t\leq \kappa^{\varepsilon/(1+\varepsilon)}.\end{cases}.\]
		We note that $t\inv f(t)-1=o(1)$ as $t\to \infty$, so 
		\begin{align*}
			&\int_0^\infty \frac{h(t)(\log t)^n}{t^{1/\sigma}}\mmd t=\int_0^\infty \frac{f(t)(\log t)^n}{t^{1/\sigma+1}}\mmd t+\int_{\kappa^{\varepsilon/(1+\varepsilon)}}^\infty \frac{(t\inv f(t)-1)(\log t)^n}{t^{1/\sigma}}\mmd t\\
			=&\int_0^\infty \frac{f(t)(\log t)^n}{t^{1/\sigma+1}}\mmd t+O(\kappa^{-\varepsilon(1/\sigma-1)}).
		\end{align*}
		This proves Eq. \eqref{Moments1}.
		
		With the same calculation as Eq. \eqref{eq:mzht}, we get 
		\begin{equation}
			M'(\kappa)=\frac{\kappa^{1/\sigma-1}}{\log \kappa}\left(\sum_{n\leq N}\frac{1}{(\log \kappa)^n}\int_0^\infty \frac{h_1(t)(\log t)^n}{t^{1/\sigma}}\mmd t+O((\log \kappa)^{-N-1})\right),
		\end{equation}
		where 
		\[h_1(t)=\begin{cases} 1& t\geq \kappa^{\varepsilon/(1+\varepsilon)}\\ f'(t) & t\leq \kappa^{\varepsilon/(1+\varepsilon)}.\end{cases}.\]
		Finally, we note that $f'(t)-1=t\inv +O(t^{-2})$ as $t\to \infty$, so 
		\begin{align*}
			&\int_0^\infty \frac{h_1(t)(\log t)^n}{t^{1/\sigma}}\mmd t=\int_0^\infty \frac{f'(t)(\log t)^n}{t^{1/\sigma}}\mmd t+\int_{\kappa^{\varepsilon/(1+\varepsilon)}}^\infty \frac{(f'(t)-1)(\log t)^n}{t^{1/\sigma}}\mmd t\\
			=&\int_0^\infty \frac{f'(t)(\log t)^n}{t^{1/\sigma}}\mmd t+O(\kappa^{-\varepsilon/\sigma}).
		\end{align*}
		This completes the proof.
	\end{proof}
	\begin{prop}\label{prop:saddle2}
		As the positive real numbers $\kappa\to\infty$, we have for $m\geq 2$
		\begin{equation}
			M^{(m)}(\kappa)= \frac{\kappa^{1/\sigma-m}}{\log \kappa}\left(a_0^{(m)}+\frac{a_1^{(m)}}{\log \kappa}+\dots+\frac{a_N^{(m)}}{(\log \kappa)^N}+O\left(\frac{1}{(\log \kappa)^{N+1}}\right)\right).
		\end{equation}
		where 
		$$ a_n^{(m)}:=\int_0^{\infty} \frac{f^{(m)}(u)(\log u)^n}{u^{1/\sigma+1-m}}\mmd u.$$ 
	\end{prop}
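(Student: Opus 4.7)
The plan is to follow the structure of the proof of Proposition~\ref{prop:saddle1}, adapted to the $m$-th derivative. The simplification for $m \geq 2$ is that the main contribution already comes from a single absolutely convergent integral, with no need to balance divergences between the small- and large-prime regimes.

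First, differentiating the Euler product termwise (justified by local uniform convergence from Lemmas~\ref{lem:pzsigma} and~\ref{lem:Mp2}) gives
\[
M^{(m)}(\kappa) = \sum_p M_p^{(m)}(\kappa).
\]
I would split this sum at a threshold $p^\sigma \asymp \kappa^{1/(1+\varepsilon)}$ for a fixed small $\varepsilon > 0$. For the large primes $p^\sigma > \kappa^{1/(1+\varepsilon)}$, Lemma~\ref{lem:pzsigma} gives
\[
M_p^{(m)}(\kappa) = p^{-m\sigma} f^{(m)}\!\left(\frac{\kappa}{p^\sigma}\right) + O\!\left(p^{-(m+1-\varepsilon)\sigma}\right),
\]
while for the small primes $p^\sigma < \kappa^{1/(2+\varepsilon')}$, Lemma~\ref{lem:Mp2} combined with the large-$u$ asymptotic $f^{(m)}(u) = (-1)^m(m-1)!/(2u^m) + O(u^{-m-1})$ from Lemma~\ref{lem:Bessel2} yields
\[
M_p^{(m)}(\kappa) = p^{-m\sigma} f^{(m)}\!\left(\frac{\kappa}{p^\sigma}\right) + O\!\left(\kappa^{-m-\varepsilon'} + p^{\sigma}\kappa^{-m-1}\right),
\]
so both regimes share the unified main term $p^{-m\sigma} f^{(m)}(\kappa/p^\sigma)$.

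Second, I would convert the resulting sum $\sum_p p^{-m\sigma} f^{(m)}(\kappa/p^\sigma)$ into an integral using the Prime Number Theorem and then change variables $u = \kappa/t^\sigma$, exactly as in the computation leading to~\eqref{eq:mzht}. This produces
\[
\sum_p p^{-m\sigma} f^{(m)}\!\left(\frac{\kappa}{p^\sigma}\right) = \kappa^{1/\sigma - m} \int_0^{\infty} \frac{f^{(m)}(u)}{(\log\kappa - \log u)\, u^{1/\sigma + 1 - m}}\, du + (\text{smaller}),
\]
and expanding $(\log\kappa - \log u)^{-1} = (\log\kappa)^{-1}\sum_{n=0}^{N}(\log u/\log\kappa)^n + O((\log\kappa)^{-N-2})$ produces precisely the coefficients $a_n^{(m)}$ in the statement. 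The integrals $a_n^{(m)}$ are absolutely convergent for $m \geq 2$ and $1/2<\sigma<1$: at infinity the $u^{-m}$ decay of $f^{(m)}$ suffices, while at the origin Lemma~\ref{lem:Bessel2} gives $f^{(m)}(u) \asymp u$ for odd $m$ or $\asymp 1$ for even $m$, producing integrands $u^{m - 1/\sigma}$ or $u^{m - 1 - 1/\sigma}$ respectively, both integrable under our assumptions.

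The main technical obstacle is that the ranges $p^\sigma > \kappa^{1/(1+\varepsilon)}$ and $p^\sigma < \kappa^{1/(2+\varepsilon')}$ of Lemmas~\ref{lem:pzsigma} and~\ref{lem:Mp2} do not overlap for natural choices of $\varepsilon,\varepsilon'$, leaving an intermediate range of primes to be handled separately. One resolution is to sharpen Lemma~\ref{lem:Mp2} by pushing the Watson-type expansion of $\varphi$ to higher order, thereby relaxing the exponent $(2+\varepsilon')$; another is to majorize $|M_p^{(m)}(\kappa)|$ crudely on the intermediate range via Cauchy's integral formula in a disk on which $M_p(z)$ is already estimated, and verify via Prime Number Theorem counts that the contribution is a power of $\kappa$ smaller than the main term. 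With this gluing in hand, the remaining ingredients—control of the prime-counting remainder in the substitution and of the Taylor remainder of the logarithmic expansion—are routine.
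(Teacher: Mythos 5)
Your plan is the right one and your verification of the absolute convergence of $a_n^{(m)}$ at both endpoints is correct, but the ``main technical obstacle'' you flag is self-inflicted, not real, and your two proposed remedies are unnecessary. Lemma~\ref{lem:pzsigma} is stated for an arbitrary $\varepsilon>0$: its hypothesis $\kappa<p^{(1+\varepsilon)\sigma}$ covers \emph{more} primes as $\varepsilon$ grows, at the cost of a worse error term $O(p^{-(m+1-\varepsilon)\sigma})$. If you simply apply it with $\varepsilon$ replaced by $1+\varepsilon$ — which is precisely what the paper does — the condition becomes $p^\sigma>\kappa^{1/(2+\varepsilon)}$ and matches the range of Lemma~\ref{lem:Mp2} exactly, so the two regimes abut with no intermediate strip. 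The trade-off is that the error becomes $O(p^{-(m-\varepsilon)\sigma})$, which is not summable for $m=0,1$ (this is why Proposition~\ref{prop:saddle1} cannot use the same trick and must split at $\kappa^{1/(1+\varepsilon)}$), but is harmless for $m\geq 2$ as long as $\varepsilon<m-1/\sigma$; this is precisely where the hypothesis $m\geq 2$ earns its keep, and is a point your proposal does not surface. Your suggested workarounds — sharpening the Watson expansion to push the threshold in Lemma~\ref{lem:Mp2}, or a Cauchy-estimate patch over the intermediate range — would be extra machinery the lemma's own parameter already provides for free; moreover the first remedy as stated would not obviously relax the constraint, since the $(2+\varepsilon)$ there arises from demanding $\cosh a\ll|z|^{1/2-\varepsilon}$, not from truncating the Watson series.

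One minor cosmetic difference from the paper: rather than carrying the unified integrand $p^{-m\sigma}f^{(m)}(\kappa/p^\sigma)$ through both regimes, the paper keeps the small-prime contribution in the form $\alpha_m\kappa^{-m}$ and only at the integral stage glues the two pieces via the auxiliary function $h_m$, using $t^{m-1}f^{(m)}(t)-\alpha_m t^{-1}=O(t^{-2})$ to estimate the mismatch. Your version is equivalent after the change of variables and costs nothing either way. With the parameter choice corrected, the rest of your argument — conversion to an integral via the Prime Number Theorem, the substitution $u=\kappa/t^\sigma$, and the geometric expansion of $(\log\kappa-\log u)^{-1}$ — reproduces the paper's computation.
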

	\begin{proof}
		Put $\alpha_m=(-1)^m\frac{(m-1)!}{2}$. Using Lemma~\ref{lem:pzsigma} (with $\varepsilon$ replaced by $1+\varepsilon$) and Lemma~\ref{lem:Mp01} again, we have
		\begin{align*}
			M^{(m)}(\kappa)&=\sum_{p^{\sigma}<\kappa^{1/(2+\varepsilon)}}M_{p}^{(m)}(\kappa)+\sum_{p^{\sigma}>\kappa^{1/(2+\varepsilon)}}M_{p}^{(m)}(\kappa)\\
			&=\sum_{p^{\sigma}<\kappa^{1/(2+\varepsilon)}}\alpha_m\,\kappa^{-m}+\sum_{p^{\sigma}>\kappa^{1/(2+\varepsilon)}}p^{-m\sigma}f^{(m)}(p^{-\sigma}\kappa)+O(\kappa^{-m+1+\varepsilon})\\
			&= \alpha_m\,\kappa^{-m}\int_{\kappa^{-1/(2+\varepsilon)}}^{2^{-\sigma}}\frac{1}{\log t}t^{-\frac{1}{\sigma}-1}\mmd t+\int_0^{\kappa^{-1/(2+\varepsilon)}}\frac{f^{(m)}(t\kappa)}{\log t}t^{-\frac{1}{\sigma}+m-1}\mmd t+O(\kappa^{-m+1+\varepsilon})\\
			&=\alpha_m\kappa^{\frac{1}{\sigma}-m}\int_{\kappa^{\varepsilon/(2+\varepsilon)}}^{2^{-\sigma}\kappa}\frac{1}{\log t-\log \kappa}t^{-\frac{1}{\sigma}-1}\mmd t+\kappa^{\frac{1}{\sigma}-m}\int_0^{\kappa^{\varepsilon/(2+\varepsilon)}}\frac{f^{(m)}(t)}{\log t-\log \kappa}t^{-\frac{1}{\sigma}+m-1}\mmd t.
		\end{align*}
		When $0\leq t\leq 2/\kappa$, by Lemma \ref{lem:Bessel2}, we have $f^{(m)}(t)=O(1)$, so 
		\[\int_0^{2/\kappa}\frac{f^{(m)}(t)}{\log t-\log \kappa}t^{-\frac{1}{\sigma}+1}\mmd t\ll \int_0^{2/\kappa}\frac{t^{-\frac{1}{\sigma}+1}}{\log \kappa}\mmd t\ll \frac{1}{\kappa^{2-\frac{1}{\sigma}}\log \kappa}.\]
		Thus
		\begin{equation}\label{eq:mzhtn}
			M^{(m)}(\kappa)=\frac{\kappa^{1/\sigma-m}}{\log \kappa}\left(\sum_{n\leq N}\frac{1}{(\log \kappa)^n}\int_0^\infty \frac{h_m(t)(\log t)^n}{t^{1/\sigma}}\mmd t+O((\log \kappa)^{-N-1})\right),
		\end{equation}
		where 
		\[h_m(t)=\begin{cases} \alpha_m\,t^{-1}& t\geq \kappa^{\varepsilon/(2+\varepsilon)}\\ t^{m-1} f^{(m)}(t) & t\leq \kappa^{\varepsilon/(2+\varepsilon)}\end{cases}.\]
		Again we see that $t^{m-1} f^{(m)}(t)=\alpha_mt^{-1}+o(t^{-2})$ as $t\to \infty$, so 
		\begin{align*}
			&\int_0^\infty \frac{h_m(t)(\log t)^n}{t^{1/\sigma}}\mmd t=\int_0^\infty \frac{f^{(m)}(t)(\log t)^n}{t^{1/\sigma-m+1}}\mmd t+\int_{\kappa^{\varepsilon/(2+\varepsilon)}}^\infty \frac{\big(t^{m-1} f^{(m)}(t)-\alpha_mt^{-1}\big)(\log t)^n}{t^{1/\sigma}}\mmd t\\
			=&\int_0^\infty \frac{f^{(m)}(t)(\log t)^n}{t^{1/\sigma-m+1}}\mmd t+O(\kappa^{-\frac{\varepsilon}{2}(1/\sigma+1)}).
		\end{align*}
		This proves Eq. \eqref{Moments1}.
	\end{proof}
	
	We then apply saddle point method to get the desired asymptotic expansion of $\Psi$. Here we give a precise estimation for the saddle point $\kappa$.
	\begin{lem}\label{lem:kappatau}
		Let $\tau$ be a large real number and $\kappa$ be the unique solution to $M'(k)=\tau$. Then there exist computable polynomials $f_0(\cdot),f_1(\cdot),\cdots,f_N(\cdot)$ with $\deg f_n\le n$ for $0\le n\le N$, which depends only on $\sigma$ and $N$, such that 
		\[\kappa=g(\sigma)(\tau\log \tau)^{\sigma/(1-\sigma)}\left(f_0(\log_2 \tau)+\frac{f_1(\log_2 \tau)}{\log\tau}+\dots+\frac{f_N(\log_2 \tau)}{(\log\tau)^N}+O\bigg(\bigg(\frac{\log_2 \tau}{\log\tau}\bigg)^{\!N+1}\bigg)\right),\]
		where 
		\[g(\sigma)=\left(\frac{1}{a_0^{(1)}}\frac{\sigma}{1-\sigma}\right)^{\frac{\sigma}{1-\sigma}},\]
		and $a_0^{(1)}$ is defined in Proposition \ref{prop:saddle1}. More precisely,
		$$f_0(t)=1,\,\,f_1(t)=\frac{\sigma}{1-\sigma}\,t+\log g(\sigma)-\frac{a_1^{(1)}}{a_0^{(1)}},\,\dots$$
	\end{lem}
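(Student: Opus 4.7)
The plan is to invert the asymptotic expansion of $M'(\kappa)$ provided by Proposition~\ref{prop:saddle1} via a bootstrap argument. First, Proposition~\ref{prop:saddle2} with $m=2$ yields $M''(\kappa)>0$ for all large $\kappa$, so $M'$ is strictly increasing and tends to $+\infty$; hence the equation $M'(\kappa)=\tau$ admits a unique solution $\kappa=\kappa(\tau)$ for every sufficiently large $\tau$. Taking logarithms of $M'(\kappa)=\tau$ and rearranging gives the implicit relation
\[
\frac{1-\sigma}{\sigma}\log\kappa-\log\log\kappa
=\log\tau-\log a_0^{(1)}-\log\Bigl(1+\sum_{n=1}^{N}\frac{a_n^{(1)}/a_0^{(1)}}{(\log\kappa)^n}+O\bigl((\log\kappa)^{-N-1}\bigr)\Bigr).
\]
Setting $L=\log\tau$ and $L_2=\log_2\tau$, a crude first inversion gives $\log\kappa\sim\tfrac{\sigma}{1-\sigma}L$, so $\log\log\kappa=L_2+O(1)$; feeding this back yields
\[
\log\kappa=\tfrac{\sigma}{1-\sigma}(L+L_2)+\log g(\sigma)+O(L_2/L),\qquad g(\sigma)=\Bigl(\tfrac{\sigma}{(1-\sigma)a_0^{(1)}}\Bigr)^{\sigma/(1-\sigma)},
\]
and exponentiation already delivers the leading factor $g(\sigma)(\tau\log\tau)^{\sigma/(1-\sigma)}$ of the claimed expansion.

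For the full $N$-term expansion I would write
\[
\log\kappa=\tfrac{\sigma}{1-\sigma}(L+L_2)+\log g(\sigma)+\sum_{n=1}^{N}\frac{P_n(L_2)}{L^n}+O\Bigl(\frac{L_2^{N+1}}{L^{N+1}}\Bigr),
\]
substitute into the implicit equation, expand $\log\log\kappa=L_2+\log(\sigma/(1-\sigma))+\log(1+v)$ with $v=\tfrac{1-\sigma}{\sigma L}\bigl(L_2+\log g(\sigma)+\sum_j P_j(L_2)/L^j\bigr)$ by Taylor's theorem, and match coefficients of $1/L^n$ for $n=1,\dots,N$ to solve for $P_n$ recursively. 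At order $1/L$ this immediately gives $P_1(L_2)=\tfrac{\sigma}{1-\sigma}L_2+\log g(\sigma)-a_1^{(1)}/a_0^{(1)}$. Exponentiating and expanding $\exp\bigl(\sum_n P_n(L_2)/L^n\bigr)$ as a power series in $1/L$ then yields the expansion for $\kappa$ with $f_n$ being polynomial combinations of $P_1,\dots,P_n$; in particular $f_1=P_1$, agreeing with the explicit formula in the statement.

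The main obstacle is the degree bound $\deg f_n\le n$, which I would reduce to $\deg P_n\le n$ and prove by induction on $n$. The only source of $L_2$ in the implicit equation is the $\log\log\kappa$ term; expanding $\log(1+v)=\sum_{k\ge1}(-1)^{k+1}v^k/k$ and invoking the inductive hypothesis $\deg P_j\le j$ for $j<n$, one verifies that the coefficient of $1/L^n$ on the right-hand side is a polynomial in $L_2$ of degree at most $n$, hence so is $P_n$ after matching. The degree bound then transfers from $\{P_j\}$ to $\{f_j\}$: in the expansion of $\exp\bigl(\sum_n P_n/L^n\bigr)$, a typical monomial $P_{n_1}\cdots P_{n_k}/L^{n_1+\cdots+n_k}$ contributing to the $1/L^n$ coefficient satisfies $n_1+\cdots+n_k=n$, and its total $L_2$-degree is at most $n_1+\cdots+n_k=n$. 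This delivers the claimed polynomial structure and completes the inversion.
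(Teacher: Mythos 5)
Your argument is correct, and it carries out in full the asymptotic inversion that the paper simply delegates to Lemma~4.3 of \cite{Dong22}: the bootstrap (extract the leading term $\log\kappa\sim\frac{\sigma}{1-\sigma}\log\tau$, feed it back to capture $\log_2\tau$ and the constant), the recursive coefficient matching for $P_n$, and the induction giving $\deg P_n\le n$ (hence $\deg f_n\le n$) are exactly the content of such an inversion lemma. So this is essentially the same route as the paper's, just made explicit rather than cited.
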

	\begin{proof}
		This follows directly from Proposition \ref{prop:saddle1} and Lemma $4.3$ in \cite{Dong22}.
	\end{proof}
	
	\begin{lem}\label{lem:prop7.1}
		Let $1/2\leq  \sigma<1$. Then 
		\[\Psi(\tau)=\frac{\mbe(|L(\sigma,X)|^\kappa)\mme^{-\tau\kappa}}{\kappa\sqrt{2\pi M''(\kappa)}}(1+O(\kappa^{1-1/\sigma}\log \kappa)),\]
		holds uniformly for $\tau\geq 1$.
	\end{lem}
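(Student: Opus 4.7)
The approach is the classical saddle point / Laplace inversion. Set $Y:=\log|L(\sigma,X)|$, so that its moment generating function is $\mbe(\mme^{sY})=\mme^{M(s)}$, given by an absolutely convergent Euler product for all $\RE s\geq 0$. Propositions \ref{prop:saddle1}--\ref{prop:saddle2} imply that $M$ is strictly convex on $(0,\infty)$ with $M'(\kappa)\to\infty$ as $\kappa\to\infty$, so the equation $M'(\kappa)=\tau$ has a unique positive solution $\kappa=\kappa(\tau)$, the saddle point. The Mellin--Perron identity then gives
\[\Psi(\tau)=\mbp(Y>\tau)=\frac{1}{2\pi\mmi}\int_{\kappa-\mmi\infty}^{\kappa+\mmi\infty}\mme^{M(s)-s\tau}\,\frac{\mmd s}{s},\]
and parametrizing $s=\kappa+\mmi t$ rewrites this as
\[\Psi(\tau)=\frac{\mme^{M(\kappa)-\kappa\tau}}{2\pi}\int_{-\infty}^\infty\exp\bigl(M(\kappa+\mmi t)-M(\kappa)-\mmi t\tau\bigr)\,\frac{\mmd t}{\kappa+\mmi t},\]
whose phase is stationary at $t=0$.

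I would then split the $t$-integral at $|t|=T$, where $T$ is chosen slightly larger than the Gaussian width $1/\sqrt{M''(\kappa)}$ (to be optimized at the end). In the central range $|t|\leq T$, Taylor-expand
\[M(\kappa+\mmi t)-M(\kappa)-\mmi t\tau=-\tfrac{t^2}{2}M''(\kappa)+O\bigl(|t|^3 M'''(\kappa)\bigr)\]
(using $M'(\kappa)=\tau$) and $\tfrac{1}{\kappa+\mmi t}=\tfrac{1}{\kappa}+O(|t|/\kappa^2)$. Integrating the resulting Gaussian factor, extended back to all of $\R$ at a cost of size $O(\mme^{-T^2 M''(\kappa)/2})$, reproduces the main term $\mme^{M(\kappa)-\kappa\tau}/(\kappa\sqrt{2\pi M''(\kappa)})$. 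The asymptotics $M^{(m)}(\kappa)\asymp \kappa^{1/\sigma-m}/\log\kappa$ for $m\geq 2$ supplied by Propositions \ref{prop:saddle1}--\ref{prop:saddle2} let one bound the Taylor-expansion errors and the $\frac{1}{\kappa+\mmi t}$ correction within the claimed error $O(\kappa^{1-1/\sigma}\log\kappa)$.

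For the tail $|t|>T$ one needs $|\mme^{M(\kappa+\mmi t)}|$ to decay sufficiently relative to $\mme^{M(\kappa)}$. Writing
\[\RE M(\kappa+\mmi t)-M(\kappa)=\sum_p\log\frac{|\mbe(|1-X(p)p^{-\sigma}|^{-\kappa-\mmi t})|}{\mbe(|1-X(p)p^{-\sigma}|^{-\kappa})}\leq 0\]
(each summand non-positive by Jensen), quantitative decay comes from primes $p\asymp\kappa^{1/\sigma}$, which dominate the MGF and over which $t\log p$ sweeps many periods once $|t|$ leaves the Gaussian scale. Combining this decay with the central estimate and balancing the cutoff $T$ yields the factor $1+O(\kappa^{1-1/\sigma}\log\kappa)$.

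The main obstacle is precisely this tail estimate: obtaining a quantitative lower bound for $-\RE(M(\kappa+\mmi t)-M(\kappa))$ uniform for $|t|>T$ and strong enough that, combined with the trivial $1/|\kappa+\mmi t|$ decay of the measure and the Gaussian decay from the second-order Taylor piece, the tail contributes strictly less than the claimed error term. Once this is in place, the stated formula follows by assembling the central and tail contributions.
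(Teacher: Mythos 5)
The paper does not give its own proof of this lemma; it simply cites \cite[Proposition 7.1]{LLR19}. So the relevant comparison is not with a written-out argument here but with the paper's parallel proof of the $\sigma=1$ analogue, Proposition~\ref{asyPsi1}, which does follow essentially the strategy you describe but with two structural differences that matter. First, the paper does not use the raw Perron kernel $\tfrac{1}{s}$; it inserts a smoothed kernel $\tfrac{\mme^{\lambda s}-1}{\lambda s}\cdot\tfrac{1}{s}$ via Lemma~\ref{SmoothPerron}, sandwiching $\mathbf{1}_{\geq 1}$ between two absolutely convergent integrals. This sidesteps the conditional convergence of the unsmoothed Perron integral and turns the choice of $\lambda$ into a free parameter that one optimizes against the tail at the end (the paper takes $\lambda=\kappa^{-3}$). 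Your plan works with the unsmoothed $1/s$ kernel, which can be made rigorous but costs extra care at exactly the point you flag. Second, and more importantly, the paper's $\sigma=1$ argument does not prove tail decay inside the proof of Proposition~\ref{asyPsi1}; it imports a standalone quantitative estimate, Lemma~\ref{Decay}, of the shape $\mbe(|L(1,X)|^{\kappa+\mmi t})\ll \mme^{-\sqrt{|t|}}\,\mbe(|L(1,X)|^{\kappa})$ for $|t|\geq\kappa$. The analogue of this lemma for $\tfrac12\leq\sigma<1$ is precisely the piece you yourself single out as ``the main obstacle,'' and without it the proposal does not close.

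Concretely: your observation that each factor of $\RE M(\kappa+\mmi t)-M(\kappa)$ is nonpositive is correct but only qualitative, and it gives no rate. To run the saddle-point argument you need a lower bound on $-(\RE M(\kappa+\mmi t)-M(\kappa))$ that grows (e.g.\ like a power of $|t|$) once $|t|$ exceeds a few multiples of $1/\sqrt{M''(\kappa)}$, and this must be uniform in $\kappa$. In LLR19 and in the $\sigma=1$ case here, this is obtained by a separate argument exploiting the primes $p$ with $p^{\sigma}\asymp\kappa$ and bounding the oscillation of $\mbe(|1-X(p)p^{-\sigma}|^{-\kappa-\mmi t})$ away from its modulus. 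That estimate is not a routine consequence of anything in your sketch, and the error-term bookkeeping (matching $O(\kappa^{1-1/\sigma}\log\kappa)$ rather than some weaker power) depends on having it in a strong enough form. So the plan is structurally sound and mirrors the cited source, but as written it has a genuine gap at the tail estimate; I would suggest either proving a $\tfrac12\leq\sigma<1$ analogue of Lemma~\ref{Decay}, or adopting the smoothed-Perron sandwich of Lemma~\ref{SmoothPerron} to reduce how much tail decay is actually required.
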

	\begin{proof}
		See \cite[Proposition $7.1$]{LLR19}.
	\end{proof}
	
	\begin{proof}[Proof of Theorem \ref{thm1.1}]
		Combining Proposition \ref{prop:saddle1} and Lemma \ref{lem:prop7.1}, we have 
		\[\Psi(\tau)=\exp\left(\frac{\kappa^{1/\sigma}}{\log \kappa}\left(a_0^{(0)}+\frac{a_1^{(0)}}{\log \kappa}+\dots+\frac{a_N^{(0)}}{(\log \kappa)^N}+O\left(\frac{1}{(\log \kappa)^{N+1}}\right)\right)-\tau\kappa\right).\]
		Then by applying Lemma \ref{lem:kappatau}, we get
		$$\Psi(\tau)=\exp\left(-(\tau\log^{\sigma}\tau)^{\frac{1}{1-\sigma}}\left(\sum_{n=0}^{N}\frac{\mathfrak{a}_n(\log_2\tau)}{(\log\tau)^n}+ O\left(\frac{\log_2\tau}{\log\tau}\right)^{\!N+1}\right)\right),$$
		where
		$$\mfpa_0(t)=g(\sigma)\left(1-\frac{a_0^{(0)}}{a_0^{(1)}}\right),\quad
		\mfpa_1(t)=g(\sigma)\sigma\,t+g(\sigma)(1-\sigma)\left(\log g(\sigma)-\frac{a_1^{(0)}}{a_0^{(0)}}\right),\quad\dots
		$$
		To show $\mathfrak{a}_0>0$, it is equivalent to $a_0^{(0)}<a_0^{(1)}$. But the definition of $a_0^{(0)},a_0^{(1)}$ gives 
		\[a_0^{(1)}=\int_0^\infty u^{-\frac{1}{\sigma}}\mmd f(u)=\frac{1}{\sigma}\int_0^\infty \frac{f(u)}{u^{1+\frac{1}{\sigma}}}\mmd u=\frac{a_0^{(0)}}{\sigma}>a_0^{(0)}.\]
		Here the second integral converges absolutely from the facts that $f(u)\asymp u^2$ as $u\to 0$ and $f(u)\asymp u$ as $u\to \infty$.
	\end{proof}

	\subsection{\blue{When $\sigma=1$}}

	In this subsection, we switch to the case $\sigma=1$. We will see the computation in this case is similar to the case $\frac{1}{2}<\sigma<1$ with a few modification. Firstly, we give asymptotic formulas for the function $M(\kappa)$ and its derivatives, like we did in Proposition~\ref{prop:saddle1} and Proposition~\ref{prop:saddle2}
	\begin{prop}
		As the positive real numbers $\kappa\to\infty$, we have
		$$M(\kappa)=-\kappa\,\sum_{p<\kappa}\log(1-p\inv)+\frac{\kappa}{\log \kappa}\bigg(\sum_{n\le N}\frac{A_n^{(0)}}{(\log \kappa)^n}+O\Big(\frac{1}{(\log \kappa)^{N+1}}\Big)\bigg),$$
		where \begin{align}
			A_n^{(0)}=\int_0^1(\log t)^n\frac{f(t)}{t^2}\mmd t+\int_1^\infty(\log t)^n\frac{f(t)-t}{t^2}\mmd t,\label{defofA_n}
		\end{align}
		and
		$$M'(\kappa)=-\,\sum_{p<\kappa}\log(1-p\inv)+\frac{1}{\log \kappa}\bigg(\sum_{n\le N}\frac{A_n^{(1)}}{(\log \kappa)^n}+O\Big(\frac{1}{(\log \kappa)^{N+1}}\Big)\bigg),$$
		where $$A_n^{(1)}=\int_0^1(\log t)^n\frac{f'(t)}{t}\mmd t+\int_1^\infty(\log t)^n\frac{f'(t)-1}{t}\mmd t.$$
	\end{prop}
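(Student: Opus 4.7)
My plan is to mirror the proof of Proposition~\ref{prop:saddle1}, with two modifications forced by $\sigma=1$. First, since $f(u)=u+O(\log u)$ as $u\to\infty$ (Lemma~\ref{lem:Bessel2}), the naive integral $\int_0^\infty f(u)u^{-2}\,\mmd u$ diverges at $\infty$, so I will subtract the linear tail on $[1,\infty)$---this is precisely what the two-piece definition of $A_n^{(0)}$ accomplishes. Second, that absorbed linear part must be reattached to the small-prime contribution to assemble the leading term $-\kappa\sum_{p<\kappa}\log(1-p^{-1})$, which is of size $\kappa\log_2\kappa$, far larger than the secondary $\kappa/\log\kappa$ expansion.

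Concretely, I would split $M(\kappa)=\sum_pM_p(\kappa)$ at $p=\kappa^{1/(1+\varepsilon)}$ for some fixed $\varepsilon>0$. On small primes, Lemma~\ref{lem:Mp01} gives
\[
\sum_{p\le\kappa^{1/(1+\varepsilon)}}M_p(\kappa)=-\kappa\!\!\sum_{p\le\kappa^{1/(1+\varepsilon)}}\!\!\log(1-p^{-1})+O\bigl(\kappa^{1/(1+\varepsilon)}\bigr).
\]
On large primes, Lemma~\ref{lem:pzsigma} with $\sigma=1,\,m=0$ yields $M_p(\kappa)=f(\kappa/p)+O(\kappa/p^2)$, with the $O$-terms summing to $O(\kappa^{\varepsilon/(1+\varepsilon)}/\log\kappa)$. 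Applying the Prime Number Theorem and the substitution $u=\kappa/t$ then transforms $\sum_{p>\kappa^{1/(1+\varepsilon)}}f(\kappa/p)$ into
\[
\kappa\int_0^{\kappa^{\varepsilon/(1+\varepsilon)}}\!\!\frac{f(u)}{u^2(\log\kappa-\log u)}\,\mmd u
\]
up to negligible error.

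The decisive $\sigma=1$ manoeuvre is to split this integral at $u=1$ and, on $[1,\kappa^{\varepsilon/(1+\varepsilon)}]$, decompose $f(u)=u+(f(u)-u)$. Converting the $u$-piece back to a prime sum via the inverse substitution gives $\kappa\sum_{\kappa^{1/(1+\varepsilon)}<p\le\kappa}p^{-1}$, which via $-\log(1-p^{-1})=p^{-1}+O(p^{-2})$ combines with the small-prime main term to produce exactly $-\kappa\sum_{p<\kappa}\log(1-p^{-1})$, the $O(p^{-2})$ tail being absorbed into the remainder. What survives is $f(u)/u^2$ on $[0,1]$ (integrable at $0$ since $f(u)\asymp u^2$ there) and $(f(u)-u)/u^2$ on $[1,\kappa^{\varepsilon/(1+\varepsilon)}]$ (integrable at $\infty$ since $f(u)-u=O(\log u)$). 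Extending the upper limit to $+\infty$ costs only $O(\kappa^{-c})$, and expanding $(\log\kappa-\log u)^{-1}=(\log\kappa)^{-1}\sum_{n\ge0}(\log u/\log\kappa)^n$ as a geometric series and truncating at $n=N$ produces the claimed expansion with coefficients $A_n^{(0)}$ exactly as defined.

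The same argument handles $M'(\kappa)$, using the $m=1$ forms of Lemmas~\ref{lem:pzsigma} and~\ref{lem:Mp01} with $f(u)-u$ replaced by $f'(u)-1$, legitimate because $f'(u)=1+O(u^{-1})$ at infinity. I expect the principal technical hurdle to be this Mertens-type matching: one must verify that the linear-$u$ piece of the integral, once returned to a prime sum, aligns with the small-prime contribution precisely enough that their union equals $-\kappa\sum_{p<\kappa}\log(1-p^{-1})$ with error $O(\kappa/(\log\kappa)^{N+1})$. Invoking the classical PNT $\pi(t)=\mathrm{li}(t)+O(t\mme^{-c\sqrt{\log t}})$ makes every sum-to-integral error exponentially small in $\sqrt{\log\kappa}$, so the task reduces to bookkeeping the $O(p^{-2})$ gap between $-\log(1-p^{-1})$ and $p^{-1}$ on the cutoff range $(\kappa^{1/(1+\varepsilon)},\kappa]$, whose contribution is $O(\kappa^{\varepsilon/(1+\varepsilon)})$ and therefore safely inside the target remainder.
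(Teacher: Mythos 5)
Your proposal is correct and follows essentially the same route as the paper: split at $p=\kappa^{1/(1+\varepsilon)}$, use Lemma~\ref{lem:Mp01} for small primes and Lemma~\ref{lem:pzsigma} for large ones, pass to an integral via PNT, split at $t=1$, decompose $f(t)=t+(f(t)-t)$, reconcile the linear piece with the small-prime sum by Mertens, use $f(t)-t=O(\log t)$ to extend to $\infty$, and expand $(\log\kappa-\log t)^{-1}$ geometrically. The only cosmetic difference is the direction of the Mertens matching: the paper evaluates both $\int_1^{\kappa^{\varepsilon/(1+\varepsilon)}}\frac{\mmd t}{t(\log\kappa-\log t)}$ and $\sum_{\kappa^{1/(1+\varepsilon)}<p<\kappa}\log(1-p^{-1})^{-1}$ to the same constant $\log(1+\varepsilon)$ and cancels, while you convert the integral back to $\kappa\sum_{\kappa^{1/(1+\varepsilon)}<p<\kappa}p^{-1}$ and then merge---the same identity used in opposite directions, with the same $O(p^{-2})$ bookkeeping.
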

	\begin{proof}
		Together with prime number theorem, Lemma \ref{lem:Mp01} gives 
		\[\sum_{p<\kappa^{1/(1+\varepsilon)}}M_p(\kappa)=-\kappa\sum_{p<\kappa^{1/(1+\varepsilon)}}\log(1-p\inv)+O(\kappa^{1/(1+\varepsilon)}).\]
		A similar calculation as Eq. \eqref{eq:mzht} gives 
		\[\sum_{p>\kappa^{1/(1+\varepsilon)}}M_p(\kappa)=\kappa\int_0^{\kappa^{\varepsilon/(1+\varepsilon)}}\frac{f(t)}{t^2}\frac{\mmd t}{\log \kappa-\log t}+O(\kappa^{\varepsilon}).\]
		Following the Mertens' formula \cite[Section $1.6$, Theorem $11$]{Te95}, we have 
		\begin{align*}
			\sum_{\kappa^{1/(1+\varepsilon)}<p<\kappa}\log(1-p\inv)\inv&=\log\log \kappa-\log_2 \kappa^{1/(1+\varepsilon)}+O\left(\mme^{-\sqrt{\log \kappa}}\right)\\
			&=\log(1+\varepsilon)+O\left(\mme^{-\sqrt{\log \kappa}}\right).
		\end{align*}
		On the other hand, we have
		\[\int_1^{\kappa^{\varepsilon/(1+\varepsilon)}}\frac{1}{t}\frac{\mmd t}{(\log \kappa-\log t)}=-\log(\log \kappa-\log t)\big|_1^{\kappa^{\varepsilon/(1+\varepsilon)}}=\log(1+\varepsilon).\]
		So we have 
		\begin{align*}  &M(\kappa)=\sum_{p<\kappa^{1/(1+\varepsilon)}}M_p(\kappa)+\sum_{p>\kappa^{1/(1+\varepsilon)}}M_p(\kappa)\\
			&=-\kappa\,\sum_{p<\kappa}\log(1-p\inv)-\kappa\int_1^{\kappa^{\varepsilon/(1+\varepsilon)}}\frac{\mmd t}{t(\log \kappa-\log t)}+\kappa\int_0^{\kappa^{\varepsilon/(1+\varepsilon)}}\frac{f(t)}{t^2}\frac{\mmd t}{\log\kappa-\log t}\\
			& =-\kappa\,\sum_{p<\kappa}\log(1-p\inv)+\frac{\kappa}{\log \kappa}\,\sum_{n=0}^{\infty}(\log \kappa)^n\left(\int_{0}^{1}(\log t)^n\frac{f(t)}{t^2}\mmd t+\int_{1}^{\kappa^{\varepsilon/(1+\varepsilon)}}(\log t)^n\frac{f(t)-t}{t^2}\mmd t\right).
		\end{align*}
		Meanwhile, from $f(t)-t=O(\log t)$ as $t\to\infty$ we see
		$$
		\int_{\kappa^{\varepsilon/(1+\varepsilon)}}^{\infty}(\log t)^n\frac{f(t)-t}{t^2}\mmd t=O(\kappa^{-\varepsilon}).
		$$
		We conclude that
		$$M(\kappa)=-\kappa\,\sum_{p<\kappa}\log(1-p\inv)+\frac{\kappa}{\log \kappa}\bigg(\sum_{n\le N}\frac{A_n^{(0)}}{(\log \kappa)^n}+O\Big(\frac{1}{(\log \kappa)^{N+1}}\Big)\bigg).$$
		Along the same line, we can get the desired asymptotic expansion for $M'(\kappa)$.
	\end{proof}
	
	\begin{prop}\label{prop4.3}
		As the positive real number $\kappa\to\infty$, we have for $m\geq 2$
		\begin{equation}
			M^{(m)}(\kappa)= \frac{\kappa^{1-m}}{\log \kappa}\left(A_0^{(m)}+\frac{A_1^{(m)}}{\log \kappa}+\dots+\frac{A_N^{(m)}}{(\log \kappa)^N}+O\left(\frac{1}{(\log \kappa)^{N+1}}\right)\right),  
		\end{equation}
		where 
		$$A_n^{(m)}:=\int_0^{\infty} f^{(m)}(u)u^{m-2}(\log u)^n \mmd u.$$
	\end{prop}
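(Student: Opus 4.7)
The plan is to adapt the proof of Proposition~\ref{prop:saddle2} to $\sigma=1$. A pleasant feature of the case $m\ge 2$, in contrast to the companion estimates for $M$ and $M'$ in the previous proposition, is that there is no Mertens-type divergence to extract: by Lemma~\ref{lem:Bessel2} one has $f^{(m)}(u)\sim\alpha_m u^{-m}$ at infinity with $\alpha_m=(-1)^m(m-1)!/2$, so the integrand $u^{m-2}f^{(m)}(u)$ decays like $u^{-2}$, which is integrable.

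First I would split $M^{(m)}(\kappa)=\sum_p M_p^{(m)}(\kappa)$ at the threshold $p=\kappa^{1/(2+\varepsilon)}$. Small primes are controlled by Lemma~\ref{lem:Mp2}, giving $M_p^{(m)}(\kappa)=\alpha_m\kappa^{-m}+O(\kappa^{-m-\varepsilon})$. Large primes are controlled by Lemma~\ref{lem:pzsigma}, applied with $\varepsilon$ replaced by $1+\varepsilon$, giving $M_p^{(m)}(\kappa)=p^{-m}f^{(m)}(\kappa/p)+O(p^{-(m-\varepsilon)})$. Summing the pointwise errors against the prime-counting function then contributes a globally negligible remainder.

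Next, I would apply PNT in the form $\sum_{p<X}g(p)\approx\int_2^X g(t)\,\mmd t/\log t$ and substitute $u=\kappa/t$ to convert both main terms into integrals. Writing the combined integrand as a piecewise function $h_m(u)$ equal to $u^{m-2}f^{(m)}(u)$ for $u\le\kappa^{(1+\varepsilon)/(2+\varepsilon)}$ and equal to $\alpha_m u^{-2}$ otherwise, this yields
\[
M^{(m)}(\kappa)=\kappa^{1-m}\int_0^{\kappa/2}\frac{h_m(u)}{\log\kappa-\log u}\,\mmd u+(\text{negligible}).
\]
I would then expand the geometric series $\tfrac{1}{\log\kappa-\log u}=\tfrac{1}{\log\kappa}\sum_{n\ge 0}(\log u/\log\kappa)^n$ to level $N$, use the large-$u$ asymptotic $u^{m-2}f^{(m)}(u)-\alpha_m u^{-2}=O(u^{-3})$ (again from Lemma~\ref{lem:Bessel2}) to replace $h_m$ globally by $u^{m-2}f^{(m)}(u)$, and finally extend the upper limit to $\infty$. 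This produces the claimed formula with $A_n^{(m)}=\int_0^\infty f^{(m)}(u)u^{m-2}(\log u)^n\,\mmd u$. Convergence of these constants at $u=0$ follows from the near-origin bounds in Lemma~\ref{lem:Bessel2} ($f^{(m)}\asymp 1$ or $u$ according to parity), each integrable against $u^{m-2}$ for $m\ge 2$; convergence at infinity is the $u^{-2}$ decay already observed.

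The main obstacle I expect is careful bookkeeping of the error terms, particularly ensuring that the pointwise errors from Lemmas~\ref{lem:Mp2} and \ref{lem:pzsigma}, once summed, remain negligible compared to the target scale $\kappa^{1-m}/(\log\kappa)^{N+2}$. The threshold $\kappa^{1/(2+\varepsilon)}$ and the small parameter $\varepsilon>0$ must be coordinated so that the estimate holds uniformly in $N$.
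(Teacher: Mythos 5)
Your proposal is correct and follows exactly the route the paper takes: the paper's proof of this proposition is literally the one-sentence remark that one repeats the proof of Proposition~\ref{prop:saddle2} with $\sigma$ replaced by $1$, and your write-up fleshes out precisely that adaptation (same threshold $p=\kappa^{1/(2+\varepsilon)}$, same use of Lemmas~\ref{lem:Mp2} and~\ref{lem:pzsigma}, same piecewise $h_m$ and geometric-series expansion of $1/(\log\kappa-\log u)$). Your observation that no Mertens-type extraction is needed for $m\ge 2$ because the tail decays like $u^{-2}$, and your verification of convergence of $A_n^{(m)}$ at both endpoints, are the right sanity checks and match the structure of the $\sigma<1$ argument.
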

	\begin{proof}
		The proof here is exactly the same as the proof of Proposition~\ref{prop:saddle2} where we replace $\sigma$ by 1.
	\end{proof}

	\begin{lem}\label{SmoothPerron}
		Let $\lambda>0$ be a real number and $N$ be a positive integer. For any $c>0$ we have for $y>0$
		$$
		0\leq \frac{1}{2\pi \mmi}\int_{c-\mmi\infty}^{c+\mmi\infty} y^s \frac{\mme^{\lambda s}-1}{\lambda s}\frac{\mmd s}{s} -\textbf{1}_{\ge1}(y)\leq 
		\frac{1}{2\pi \mmi}\int_{c-\mmi\infty}^{c+\mmi\infty} y^s \frac{\mme^{\lambda s}-1}{\lambda s} \frac{1-\mme^{-\lambda  s}}{s}\mmd s.
		$$
	\end{lem}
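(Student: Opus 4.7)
\textit{Approach.} The plan is to evaluate both contour integrals in closed form via Perron's classical formula and then to verify the two inequalities pointwise.

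For the lower integral, I would use the representation
$\frac{\mme^{\lambda s}-1}{\lambda s}=\frac{1}{\lambda}\int_0^{\lambda}\mme^{us}\,\mmd u$
to write $y^{s}\frac{\mme^{\lambda s}-1}{\lambda s}=\frac{1}{\lambda}\int_0^{\lambda}(y\mme^{u})^{s}\,\mmd u$, interchange the contour integral with the $u$-integral, and apply Perron's formula $\frac{1}{2\pi \mmi}\int_{c-\mmi\infty}^{c+\mmi\infty}z^{s}\,\mmd s/s=\textbf{1}_{z\geq 1}$ slicewise. The result is
$$\frac{1}{2\pi\mmi}\int_{c-\mmi\infty}^{c+\mmi\infty} y^{s}\frac{\mme^{\lambda s}-1}{\lambda s}\frac{\mmd s}{s}=K_1(y):=\frac{1}{\lambda}\int_0^{\lambda}\textbf{1}_{y\mme^{u}\geq 1}\,\mmd u,$$
which computes explicitly to $K_1(y)=1$ for $y\geq 1$, $K_1(y)=1+\log y/\lambda$ for $\mme^{-\lambda}\leq y\leq 1$, and $K_1(y)=0$ for $y\leq \mme^{-\lambda}$. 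The inequality $K_1(y)\geq \textbf{1}_{y\geq 1}$ is immediate, giving the left bound.

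For the right inequality, I would apply the same trick to the second kernel: writing $\frac{1-\mme^{-\lambda s}}{s}=\int_0^{\lambda}\mme^{-vs}\,\mmd v$ and splitting $\mme^{\lambda s}-1$, Perron's formula yields
$$\frac{1}{2\pi\mmi}\int_{c-\mmi\infty}^{c+\mmi\infty}y^{s}\frac{\mme^{\lambda s}-1}{\lambda s}\frac{1-\mme^{-\lambda s}}{s}\,\mmd s=K_1(y)-K_2(y),$$
where $K_2(y):=\frac{1}{\lambda}\int_0^{\lambda}\textbf{1}_{y\mme^{-v}\geq 1}\,\mmd v$ is the companion lower approximation: $K_2(y)=0$ for $y\leq 1$, $K_2(y)=\log y/\lambda$ for $1\leq y\leq \mme^{\lambda}$, and $K_2(y)=1$ for $y\geq \mme^{\lambda}$. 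After cancelling the common $K_1(y)$, the right inequality reduces to the pointwise statement $K_2(y)\leq \textbf{1}_{y\geq 1}$, which is transparent from the explicit formula.

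\textit{Main obstacle.} The only non-trivial step is the interchange of contour and parameter integrals, since on $\Re s=c$ the Perron integrand $z^{s}/s$ is merely conditionally integrable, so Fubini does not apply directly. I would handle this by truncating the contour to $[c-\mmi T,c+\mmi T]$ (where the interchange is immediate), and then letting $T\to\infty$ using the classical quantitative Perron bound $\left|\frac{1}{2\pi\mmi}\int_{c-\mmi T}^{c+\mmi T}z^{s}\,\mmd s/s-\textbf{1}_{z\geq 1}\right|\ll z^{c}/(T|\log z|)$ together with dominated convergence on the compact parameter interval $[0,\lambda]$ (the set where $y\mme^{u}=1$ has measure zero, so the boundary case $z=1$ is harmless). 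Once this technicality is dispatched, the rest of the argument is purely pointwise from the explicit piecewise linear descriptions of $K_1$ and $K_2$.
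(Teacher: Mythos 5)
The paper does not actually supply a proof of Lemma~\ref{SmoothPerron}: it is stated as a standard smoothed Perron formula with no proof and no citation, so there is no ``paper's route'' to compare against. Your argument is essentially the standard one, and it is correct. The computation of $K_1$ and $K_2$ is right; the reduction of the right-hand inequality to $K_2\leq \mathbf{1}_{\geq 1}$ works once you observe, as you do, that
\[
\frac{\mme^{\lambda s}-1}{\lambda s}\cdot\frac{1-\mme^{-\lambda s}}{s}
=\frac{\mme^{\lambda s}-1}{\lambda s^2}-\frac{1-\mme^{-\lambda s}}{\lambda s^2},
\]
so that the second integral is $K_1-K_2$, where $K_2$ is the lower Perron majorant.

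Two small points. First, the dominated-convergence step should use the refined truncated Perron estimate $\bigl|\frac{1}{2\pi\mmi}\int_{c-\mmi T}^{c+\mmi T}z^{s}\,\frac{\mmd s}{s}-\mathbf{1}_{z\geq1}\bigr|\ll \min\bigl(1,\,z^{c}/(T|\log z|)\bigr)$ rather than the unminimized version: the latter is not integrable in $u$ across the point $y\mme^{u}=1$, whereas the minimum is bounded uniformly by $\max(1,(y\mme^{\lambda})^{c})$ and still tends to $0$ a.e.\ as $T\to\infty$, so dominated convergence applies cleanly. Second, notice that the lemma's hypothesis introduces an integer $N$ that never appears in the displayed inequality; the paper later uses the kernel $\bigl(\frac{\mme^{\lambda s}-1}{\lambda s}\bigr)^{N}$ in the proof of Theorem~\ref{thm1.2}, and your slicewise-Perron argument extends verbatim to that case (yielding $K_1^{(N)}(y)=\lambda^{-N}\int_{[0,\lambda]^{N}}\mathbf{1}_{y\mme^{u_1+\cdots+u_N}\geq 1}\,\mmd u$, supported on $[\mme^{-N\lambda},\infty)$), but you should flag this generalization explicitly if you intend the proof to cover the paper's later application.
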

	\begin{lem}\label{Decay} Let $s=\kappa+\mmi t$ where $\kappa$ is a large positive real number. Then, in the range $|t|\geq \kappa$ we have 
		$$\mbe\left(|L(1, X)|^s\right)\ll \exp\left(-\sqrt{|t|}\right) \mbe\left(|L(1, X)|^\kappa\right).$$
	\end{lem}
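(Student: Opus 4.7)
The plan is to exploit the Euler product of $L(1,X)$. By independence of the $X(p)$,
\[
\mbe\bigl[|L(1,X)|^s\bigr] = \prod_p \phi_p(s), \qquad \phi_p(s) := \mbe\bigl[|1-X(p)/p|^{-s}\bigr],
\]
and $\phi_p(\kappa)>0$, so the lemma is equivalent to
$\prod_p |\phi_p(s)|/\phi_p(\kappa)\ll \mme^{-\sqrt{|t|}}$.
From the integral representation $\phi_p(s)=\frac{1}{2\pi}\int_0^{2\pi}|1-\mme^{\mmi\theta}/p|^{-s}\mmd\theta$ and the triangle inequality, we get the trivial per-prime bound $|\phi_p(s)|\leq \phi_p(\kappa)$. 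The whole gain will be extracted from the primes in the range $p\geq C|t|$, where $C>0$ is a large absolute constant to be chosen.

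For such primes, both $|t|/p$ and $\kappa/p$ are at most $1/C$ (using $|t|\geq\kappa$). Moreover $\phi_p$ is nonvanishing in a disk of radius $\asymp p$ around $\kappa$, since its zeros are essentially those of $I_0(\,\cdot\,/p)$, which lie on the imaginary axis at distance $\asymp p$; hence $M_p:=\log\phi_p$ is analytic there and admits the convergent expansion
\[
\log\frac{\phi_p(s)}{\phi_p(\kappa)}=\sum_{n\geq 1}\frac{(\mmi t)^n}{n!}\,M_p^{(n)}(\kappa).
\]
Taking real parts keeps only the even-order terms. By Lemmas~\ref{lem:pzsigma} (applied with $\sigma=1$) and~\ref{lem:Bessel2}, $M_p^{(n)}(\kappa)=p^{-n}f^{(n)}(\kappa/p)+O(p^{-n-1+\varepsilon})$ with $f''(u)=\tfrac{1}{2}+O(u)$ near zero; this gives $M_p''(\kappa)\gtrsim p^{-2}$, so the leading quadratic term contributes $-t^2 M_p''(\kappa)/2\leq -c\,t^2/p^2$. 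The tail $\sum_{n\geq 4}$ is dominated by a geometric series in $|t|/p\leq 1/C$ and can be made a small fraction of the leading term by choosing $C$ large. This yields
\[
\RE\log\frac{\phi_p(s)}{\phi_p(\kappa)}\leq -c\,\frac{t^2}{p^2}\qquad(p\geq C|t|).
\]

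Combining the trivial bound over $p<C|t|$ with this gain over $p\geq C|t|$, and using the standard estimate $\sum_{p\geq X}p^{-2}\asymp (X\log X)^{-1}$,
\[
\sum_p \RE\log\frac{\phi_p(s)}{\phi_p(\kappa)}\leq -c\sum_{p\geq C|t|}\frac{t^2}{p^2}\ll -\frac{|t|}{\log|t|}.
\]
For $|t|\geq\kappa$ with $\kappa$ (hence $|t|$) large, this dominates $-\sqrt{|t|}$, and exponentiating gives the desired bound. The main subtlety is establishing the Taylor-series control uniformly in $p$ over the entire range $p\geq C|t|$: one needs both $M_p''(\kappa)\gtrsim p^{-2}$ and appropriate upper bounds on $M_p^{(n)}(\kappa)$ so that the higher-order tail is genuinely absorbed into the quadratic term. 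Both are supplied by Lemmas~\ref{lem:pzsigma} and~\ref{lem:Bessel2}, and with these in hand the argument reduces to a clean application of the Euler product and the prime number theorem.
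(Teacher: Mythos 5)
Your Euler-product strategy --- trivial bound $|\phi_p(s)|\leq\phi_p(\kappa)$ for all $p$, plus a quantitative gain from the primes $p\geq C|t|$, with the final count $\sum_{p\geq C|t|}t^2/p^2\asymp|t|/\log|t|\gg\sqrt{|t|}$ --- is the right skeleton, and the final numerology checks out. But the way you extract the per-prime gain has a gap. You control the tail of the Taylor series $\sum_{n\geq 4}\frac{(\mmi t)^n}{n!}M_p^{(n)}(\kappa)$ by citing Lemmas~\ref{lem:pzsigma} and~\ref{lem:Bessel2}, but those lemmas only bound $M_p^{(n)}(\kappa)$ for each \emph{fixed} $n$ and say nothing about how the implicit constants depend on $n$. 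In fact they necessarily grow like $n!$: since $f=\log I_0$ has radius of convergence $j_{0,1}\approx 2.40$ at the origin, one has $|f^{(n)}(0)|\sim n!\,j_{0,1}^{-n}$, so the ``$\asymp 1$'' in Lemma~\ref{lem:Bessel2} hides an $n!$-sized constant. Without a uniform bound of the form $|M_p^{(n)}(\kappa)|\ll A^n n!/p^n$, you cannot conclude that the tail is dominated by a convergent geometric series, let alone that it is a small fraction of the $n=2$ term. The correct way to get such a bound is Cauchy's estimate, which requires precisely the nonvanishing of $\phi_p$ in a disk of radius $\asymp p$ around $\kappa$ \emph{and} an upper bound on $|\log\phi_p|$ there --- the first of which you assert via ``the zeros are essentially those of $I_0(\cdot/p)$'' without making the comparison precise, and the second of which you don't address at all.

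There is a much cleaner route that avoids all of this. Write $\rho_p(\theta)=-\log|1-\mme^{\mmi\theta}/p|$, so $|\rho_p|\leq 2/p$, and observe that
\[
\frac{\phi_p(s)}{\phi_p(\kappa)}=\widetilde{\mbe}\big[\mme^{\mmi t\rho_p}\big],
\]
the characteristic function of $\rho_p$ under the exponentially tilted measure $\mmd\widetilde{\mbp}\propto \mme^{\kappa\rho_p(\theta)}\mmd\theta$. Then, with $\rho_p'$ an independent copy,
\[
\Big|\frac{\phi_p(s)}{\phi_p(\kappa)}\Big|^2=\widetilde{\mbe}\,\widetilde{\mbe}'\big[\cos\big(t(\rho_p-\rho_p')\big)\big]\leq 1-\frac{4t^2}{\pi^2}\,\widetilde{\operatorname{Var}}(\rho_p)
\]
as soon as $4|t|/p\leq\pi$, using only $\cos x\leq 1-2x^2/\pi^2$ for $|x|\leq\pi$. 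For $p\geq C|t|\geq C\kappa$ the tilting weight $\mme^{\kappa\rho_p}$ is $1+O(1/C)$, so $\widetilde{\operatorname{Var}}(\rho_p)\asymp\operatorname{Var}(\rho_p)=\operatorname{Var}(\cos\theta)/p^2+O(p^{-3})\asymp p^{-2}$. This replaces your entire Taylor expansion and its convergence issues by a one-line trigonometric inequality, after which your summation over $p\geq C|t|$ finishes the proof exactly as you wrote it. I would recommend rewriting the per-prime estimate along these lines; as it stands, the claim that ``both are supplied by Lemmas~\ref{lem:pzsigma} and~\ref{lem:Bessel2}'' is not justified.
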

	Let us recall that
	$$\Psi_1(\tau):={\rm Prob}(|L(1,X)|>{\mme}^\gamma\tau).$$
	Now using the saddle point method, we can estimate the function $\Psi_1$ with the previous results on $M(\kappa)$.
	\begin{prop}\label{asyPsi1}
		Uniformly for $\tau\ge1$, let $\kappa$ be a positive number such that $\log\tau+\gamma=M'(\kappa)$. Then we have
		\[ \Psi_1(\tau)=\frac{{\mathbb E}\left(|L(1,X)|^{\kappa}\right)(\mme^\gamma\tau)^{-\kappa}}{\kappa\sqrt{2\pi M''(\kappa)}}
		\left(1+ O\left( \frac{\log\kappa}{\sqrt\kappa}\right)\right)\]
	\end{prop}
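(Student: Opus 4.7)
The approach is the classical saddle-point method applied to the Mellin transform of $\Psi_1(\tau)$, in direct parallel with the proof of Lemma \ref{lem:prop7.1} given in \cite[Proposition 7.1]{LLR19}. Set $Y := |L(1,X)|$, so that $\mbe(Y^s) = \mme^{M(s)}$ for $\RE s > 0$, and write $\Phi(s) := M(s) - s(\log\tau+\gamma)$, so that $\Psi_1(\tau) = \mbe\bigl(\textbf{1}_{\geq 1}(Y/(\mme^\gamma\tau))\bigr)$. First I would apply Lemma \ref{SmoothPerron} to $y = Y/(\mme^\gamma\tau)$ with a small smoothing parameter $\lambda>0$ and take expectations over $X$ (justified by absolute convergence on the chosen contour) to sandwich $\Psi_1(\tau)$ between two contour integrals
\[I_\pm(\lambda)\;=\;\frac{1}{2\pi\mmi}\int_{(c)}\mme^{\Phi(s)}\,K_\pm(\lambda,s)\,ds,\]
where $K_\pm(\lambda,s)$ are the kernels from Lemma \ref{SmoothPerron}. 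Each equals $s\inv(1 + O(\lambda|s|))$ on the region where $\lambda|s|$ is bounded, and the gap $\mbe(I_+) - \mbe(I_-)$ is controlled by the factor $(1-\mme^{-\lambda s})/s = O(\lambda)$ on the saddle contour.

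Next I would shift both contours to the vertical line $\RE s = \kappa$, where $\kappa$ is defined by $M'(\kappa) = \log\tau+\gamma$; no poles are crossed since $\mme^{M(s)}$ and the kernels are analytic for $\RE s>0$. Writing $s = \kappa+\mmi t$, the saddle condition yields
\[\Phi(\kappa+\mmi t)\;=\;\Phi(\kappa)-\tfrac{1}{2}M''(\kappa)\,t^{2}+O\bigl(M'''(\kappa)\,|t|^{3}\bigr)\]
for $|t|$ in a neighborhood of $0$, with $M''(\kappa)\asymp \kappa\inv/\log\kappa$ and $M'''(\kappa)\asymp\kappa^{-2}/\log\kappa$ supplied by Proposition \ref{prop4.3}. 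The integral is split at cutoffs $T_{0}$ (of order $\sqrt{\kappa\log\kappa}$ up to logarithmic factors) and $T_{1}=\kappa$. On $|t|\le T_{0}$ one substitutes the Taylor expansion together with the expansion $1/(\kappa+\mmi t)=\kappa\inv(1+O(t/\kappa))$, completes the square, extends the Gaussian integral to $(-\infty,\infty)$ at negligible cost, and evaluates it explicitly to obtain the main term $\mme^{\Phi(\kappa)}/(\kappa\sqrt{2\pi M''(\kappa)})$ times a multiplicative factor $1 + O(\log\kappa/\sqrt\kappa)$. On $T_{0}\le|t|\le T_{1}$ standard Gaussian tail estimates give a contribution of order $\mme^{\Phi(\kappa)}\kappa^{-A}$ for any fixed $A$, and on $|t|\ge T_{1}$ Lemma \ref{Decay} bounds the integrand pointwise by $\mme^{\Phi(\kappa)}\exp(-\sqrt{|t|})$, which integrates to something negligible compared with the main term.

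Finally I would choose $\lambda$ (e.g.\ $\lambda \asymp 1/(\kappa\log\kappa)$) so that the relative error introduced by the kernel correction $(1+O(\lambda|s|))$ on the contour, together with the Perron gap $\mbe(I_+)-\mbe(I_-)$, is at most $O(\log\kappa/\sqrt\kappa)$. The principal obstacle is exactly this error bookkeeping: one must simultaneously optimize $\lambda$, the saddle cutoff $T_{0}$, and the order of Taylor expansion in both the exponent and the smoothing kernel so that the competing contributions—the cubic-in-$t$ remainder $M'''(\kappa)t^{3}$, the $O(t/\kappa)$ expansion of $(\kappa+\mmi t)\inv$, the Perron smoothing $O(\lambda)$, and the Gaussian tail—all fit uniformly into the stated $O(\log\kappa/\sqrt\kappa)$ error, uniformly in $\tau\ge 1$. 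The structure of this optimization is identical to that in the proof of \cite[Proposition 7.1]{LLR19}; only the input asymptotics for $M, M', M''$ at $\sigma=1$, now furnished by Proposition \ref{prop4.3} and its preceding companion in this subsection, replace the $1/2<\sigma<1$ versions used there.
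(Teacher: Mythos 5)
Your proposal follows essentially the same route as the paper: smooth Perron (Lemma \ref{SmoothPerron}) to sandwich $\Psi_1(\tau)$ by contour integrals on $\RE s = \kappa$, Lemma \ref{Decay} to kill $|t| \geq \kappa$, a third-order Taylor expansion of $M$ about the saddle with the size estimates for $M'',M'''$ from Proposition \ref{prop4.3}, and a Gaussian evaluation of the central range with the kernel expanded as $\kappa^{-1}(1 - \mmi t/\kappa + \cdots)$ so that the odd term drops by symmetry. One small slip: your example $\lambda \asymp 1/(\kappa\log\kappa)$ gives a Perron-smoothing error of size $\lambda\kappa \asymp 1/\log\kappa$, which for large $\kappa$ dominates the target $\log\kappa/\sqrt\kappa$; you need $\lambda\kappa \ll \log\kappa/\sqrt\kappa$ (and also $\mme^{-\sqrt\kappa}/(\lambda\kappa)$ small), and the paper simply takes $\lambda = \kappa^{-3}$.
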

	\begin{proof}
		Let $0<\lambda<1/(2\kappa)$ be a real number to be chosen later. Using Lemma \ref{SmoothPerron}  we obtain
		\begin{equation}\label{approximation1}
			\begin{aligned}
				0&\leq \frac{1}{2\pi \mmi}\int_{\kappa-\mmi\infty}^{\kappa+\mmi\infty}{\mathbb E}\left(|L(1,X)|^s\right)(\mme^\gamma\tau)^{- s}\frac{\mme^{\lambda s}-1}{\lambda s}\frac{\mmd s}{s}-\Psi(\tau)\\
				&\leq \frac{1}{2\pi \mmi}\int_{\kappa-\mmi\infty}^{\kappa+\mmi\infty} {\mathbb E}\left(|L(1,X)|^s\right)(\mme^\gamma\tau)^{- s} \frac{\mme^{\lambda s}-1}{\lambda s} \frac{1-\mme^{-\lambda s}}{s}\mmd s.
			\end{aligned}
		\end{equation}
		Since $\lambda\kappa<1/2$ we have
		$|\mme^{\lambda s}-1|\leq 3 \text{ and } |\mme^{-\lambda s}-1|\leq 2$. 
		Therefore, using Lemma \ref{Decay} we obtain
		\begin{equation}\label{error12}
			\bigg(\int_{\kappa-\mmi\infty}^{\kappa-\mmi\kappa}+ \int_{\kappa+\mmi\kappa}^{\kappa+\mmi\infty}\bigg){\mathbb E}\left(|L(1,X)|^s\right)(\mme^\gamma\tau)^{- s}\frac{\mme^{\lambda s}-1}{\lambda s}\frac{\mmd s}{s} \ll \frac{\mme^{-\sqrt\kappa}}{\lambda \kappa} {\mathbb E}\left(|L(1,X)|^{\kappa}\right)\tau^{-\kappa},
		\end{equation}
		and similarly
		\begin{equation}\label{error2}
			\bigg(\int_{\kappa-\mmi\infty}^{\kappa-\mmi\kappa}+ \int_{\kappa+\mmi\kappa}^{\kappa+\mmi\infty}\bigg){\mathbb E}\left(|L(1,X)|^s\right)(\mme^\gamma\tau)^{- s} \frac{\mme^{\lambda s}-1}{\lambda s} \frac{1-\mme^{-\lambda s}}{s}\mmd s \ll \frac{\mme^{-{\sqrt\kappa}}}{\lambda \kappa} {\mathbb E}\left(|L(1,X)|^{\kappa}\right)\tau^{-\kappa}.
		\end{equation}
		Furthermore, if $|t|\leq \kappa$ then $\left|(1-\mme^{-\lambda s})(\mme^{\lambda s}-1)\right|\ll \lambda^2|s|^2$. Hence we derive 
		$$
		\int_{\kappa-\mmi\kappa}^{\kappa+\mmi\kappa} \mbe\left(|L(1,X)|^s\right)(\mme^\gamma\tau)^{- s} \frac{\mme^{\lambda s}-1}{\lambda s} \frac{1-\mme^{-\lambda s}}{s}\mmd s \ll \lambda\kappa{\mathbb E}\left(|L(1,X)|^{\kappa}\right)\tau^{- \kappa}.
		$$ 
		Therefore, combining this estimate with equations \eqref{approximation1}, \eqref{error12} and \eqref{error2} we deduce that
		\begin{equation}\label{approximation2}
			\begin{aligned}
				\Psi(\tau) &- \frac{1}{2\pi \mmi}\int_{\kappa-\mmi\kappa}^{\kappa+\mmi\kappa}{\mathbb E}\left(|L(1,X)|^s\right)(\mme^\gamma\tau)^{- s}\frac{\mme^{\lambda s}-1}{\lambda s^2} \mmd s \\
				&\ll \left(\lambda\kappa+\frac{\mme^{-\sqrt\kappa}}{\lambda \kappa}\right){\mathbb E}\left(|L(1,X)|^{\kappa}\right)\tau^{-\kappa}.\\
			\end{aligned}
		\end{equation}
		On the other hand, in the region $|t|\leq \kappa$ we have 
		\[\log{\mathbb E}\left(|L(1, X)|^{\kappa+\mmi t}\right)= \log{\mathbb E}\left(|L(1,X)|^{\kappa}\right)+\mmi tM'(\kappa)-\frac{t^2}{2}M''(k)+ O\left(|M'''(\kappa)||t|^{3}\right).\]
		Also, note that
		$$ \frac{\mme^{\lambda s}-1}{\lambda s^2}= \frac{1}{\kappa}\left(1-\mmi\frac{t}{\kappa}+ O\left(\lambda \kappa+\frac{t^2}{\kappa^2}\right)\right).$$
		Hence, using that $M'(\kappa)=\log\tau+\gamma$ we obtain 
		\begin{align*}
			&{\mathbb E}\left(|L(1,X)|^s\right)(\mme^\gamma\tau)^{- s}\frac{\mme^{\lambda s}-1}{\lambda s^2}\\
			= &\frac{1}{\kappa}{\mathbb E}\left(|L(1,X)|^{\kappa}\right)\mme^{-\gamma\kappa}\tau^{-\kappa}\exp\left(-\frac{t^2}{2}M''(\kappa)\right) 
			\left(1-\mmi\frac{t}{\kappa}+O\left(\lambda\kappa+ \frac{t^2}{\kappa^2}+ |M'''(\kappa)||t|^3\right)\right)
		\end{align*}
		Therefore, we obtain
		\begin{align*}
			&\frac{1}{2\pi \mmi}\int_{\kappa-\mmi\kappa}^{\kappa+\mmi\kappa}{\mathbb E}\left(L|(1,X)|^s\right)(\mme^\gamma\tau)^{- s}\frac{\mme^{\lambda s}-1}{\lambda s^2}\mmd s\\
			=& \frac{1}{\kappa}{\mathbb E}\left(|L(1,X)|^{\kappa}\right)(\mme^\gamma\tau)^{-\kappa} \frac{1}{2\pi} \int_{-\kappa}^{\kappa}\exp\left(-\frac{t^2}{2}M''(\kappa)\right)
			\left(1+ O\left(\lambda\kappa+ \frac{t^2}{\kappa^2}+ |M'''(\kappa)||t|^3\right)\right) \mmd t,
		\end{align*}
		since the integral involving $it/{\kappa}$ vanishes. Further, we have 
		$$ 
		\frac{1}{2\pi} \int_{-\kappa}^{\kappa}\exp\left(-\frac{t^2}{2}M''(\kappa)\right)\mmd t= \frac{1}{\sqrt{2\pi M''(\kappa)}}\left(1+O\left(\exp\left(-\frac12\kappa^2
		M''(\kappa)\right)\right)\right),
		$$
		and 
		$$ \int_{-\kappa}^{\kappa}|t|^n\exp\left(-\frac{t^2}{2}M''(\kappa)\right)\mmd t\ll \frac{1}{M''(\kappa)^{(n+1)/2}}.$$
		Thus, using Proposition \ref{prop4.3} for the order of $M''(\kappa)$ and $M'''(\kappa)$ we deduce that
		\begin{equation}\label{main}
			\begin{aligned}
				&\frac{1}{2\pi \mmi}\int_{\kappa-\mmi\kappa}^{\kappa+\mmi\kappa}{\mathbb E}\left(|L(1,X)|^s\right)(\mme^\gamma\tau)^{- s}\frac{\mme^{\lambda s}-1}{\lambda s^2} \mmd s\\
				=& \frac{{\mathbb E}\left(|L(1,X)|^{\kappa}\right)(\mme^\gamma\tau)^{-\kappa}}{\kappa\sqrt{2\pi M''(\kappa)}}
				\left(1+ O\left(\lambda\kappa+ \frac{\log \kappa}{\sqrt\kappa}\right)\right).
			\end{aligned}
		\end{equation}
		Thus, combining the estimates \eqref{approximation2} and \eqref{main} and choosing $\lambda= \kappa^{-3}$ completes the proof.
	\end{proof}
	As a direct result of Proposition \ref{asyPsi1}, we have the following asymptotic formula for the distribution function $\Psi_1(\tau)$, which is the second part of Theorem \ref{thm1.2}.
	\begin{cor}\label{coro4.6}
		There is a sequence of real numbers $\{\mathfrak{b}_n\}_{n\ge 1}$ such that for any integer $N\ge 1$ we have
		\begin{equation*}
			\Psi_1(\tau)=\exp\bigg(\!-\frac{{\rm e}^{\tau-A_0-1}}{\tau}
			\bigg\{1+\sum_{n=1}^N \frac{\mathfrak{b}_n}{\tau^n}
			+ O_N\bigg(\frac{1}{\tau^{N+1}}\bigg)\bigg\}\bigg)
		\end{equation*}
		uniformly for  $  \tau\ge2$,
		where $A_0=A_0^{(0)}$ is defined in \eqref{defofA_n}.
	\end{cor}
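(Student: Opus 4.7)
The plan is to take logarithms in the saddle point formula of Proposition \ref{asyPsi1} and substitute the asymptotic expansions of $M(\kappa)$, $M'(\kappa)$, and $M''(\kappa)$ from the preceding propositions. Since the saddle point $\kappa$ is defined by $\log\tau+\gamma=M'(\kappa)$, the exponent $M(\kappa)-\kappa\log(\mme^\gamma\tau)$ simplifies to $M(\kappa)-\kappa M'(\kappa)$, and this is what carries the main term. The key observation is that the singular ``Mertens'' contribution $-\kappa\sum_{p<\kappa}\log(1-p^{-1})$ appears in both $M(\kappa)$ and $\kappa M'(\kappa)$ with matching coefficients and therefore cancels exactly, leaving
\[
M(\kappa)-\kappa M'(\kappa)=\frac{\kappa}{\log\kappa}\sum_{n=0}^{N}\frac{A_n^{(0)}-A_n^{(1)}}{(\log\kappa)^n}+O\!\left(\frac{\kappa}{(\log\kappa)^{N+2}}\right).
\]
A short integration by parts applied to the defining integrals will show $A_0^{(1)}=A_0+1$, so the leading coefficient is $A_0^{(0)}-A_0^{(1)}=-1$, which already identifies the predicted main factor $-\mme^{\tau-A_0-1}/\tau$ once $\kappa$ is expressed in terms of $\tau$.

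Next I would invert the saddle point equation to write $\kappa$ explicitly as a function of $\tau$. Setting $L:=\log\kappa$ and using the classical Mertens formula $-\sum_{p<\kappa}\log(1-p^{-1})=\log L+\gamma+O(\mme^{-c\sqrt L})$ together with the expansion of $M'(\kappa)$, the defining relation becomes
\[
\log\tau=\log L+\frac{A_0^{(1)}}{L}+\frac{A_1^{(1)}}{L^2}+\cdots+O\!\left(\frac{1}{L^{N+2}}\right).
\]
Plugging in the ansatz $L=\tau+c_0+c_1/\tau+c_2/\tau^2+\cdots$ and matching coefficients order by order gives $c_0=-A_0^{(1)}=-A_0-1$, and each $c_n$ for $n\geq1$ is a universal polynomial in $A_0^{(1)},\dots,A_n^{(1)}$. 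Exponentiating yields $\kappa=\mme^{\tau-A_0-1}(1+\tilde c_1/\tau+\tilde c_2/\tau^2+\cdots)$, and in particular $\kappa/L=(\mme^{\tau-A_0-1}/\tau)(1+O(1/\tau))$ admits a full asymptotic expansion in $1/\tau$ to any prescribed order.

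Finally I would assemble the pieces. The remaining factors in Proposition \ref{asyPsi1}, namely $-\log(\kappa\sqrt{2\pi M''(\kappa)})$ and the relative error $O(\log\kappa/\sqrt\kappa)$, contribute at most $O(\tau)$ additively to $\log\Psi_1(\tau)$, using $M''(\kappa)\asymp 1/(\kappa\log\kappa)$ from Proposition \ref{prop4.3}. Since the main term is of size $\mme^\tau/\tau$, any additive error of polynomial size in $\tau$ is absorbed into a relative error $O(1/\tau^{N+1})$ after factoring out $-\mme^{\tau-A_0-1}/\tau$. Substituting the expansion of $L$ in powers of $1/\tau$ into the $1/L$-expansion of $M(\kappa)-\kappa M'(\kappa)$ then produces the coefficients $\mathfrak{b}_n$ as explicit polynomial combinations of $A_n^{(0)}$ and $A_n^{(1)}$.

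The main obstacle I anticipate is the bookkeeping of the two cascaded formal-series manipulations—first the inversion giving $L$ as a series in $1/\tau$, then the re-substitution into the $1/L$-expansion of $M(\kappa)-\kappa M'(\kappa)$. Both steps are purely algebraic, with no new analytic input needed beyond the propositions already proved, and can be handled by induction on $N$ exactly as in the inversion argument of Lemma \ref{lem:kappatau}; the only care required is ensuring that the truncation errors at each stage compose into the claimed $O(1/\tau^{N+1})$ relative error.
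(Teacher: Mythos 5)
Your proposal is correct and follows the route the paper has in mind: the paper states Corollary~\ref{coro4.6} as ``a direct result of Proposition~\ref{asyPsi1}'' without spelling out the details, and your argument — cancelling the Mertens sum in $M(\kappa)-\kappa M'(\kappa)$, verifying $A_0^{(1)}=A_0^{(0)}+1$ by integration by parts, inverting the saddle-point relation to get $L=\log\kappa$ as a $1/\tau$-series, and then observing that the prefactor $\kappa\sqrt{2\pi M''(\kappa)}$ and the $O(\log\kappa/\sqrt\kappa)$ error contribute only $O(\tau)$ additively, hence a negligible relative error against the main term $\mme^{\tau-A_0-1}/\tau$ — is exactly the computation that supplies those details. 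The integration by parts and the cascaded series inversions check out, so the proof is sound.
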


	\section{\blue{Large Deviations: Proof of Theorem \ref{thm1.2}}}\label{chap4}
	\subsection{\blue{When $\frac12<\sigma<1$}}
	The following proposition is a key to the proof of the first part of Theorem \ref{thm1.2}. It shows that, after removing a small set of ``bad'' characters, the moment of Dirichlet $L$-functions can be computed well by the random models.
	\begin{prop}\label{prop4.1}
		Let $\frac12<\sigma<1$ and $A>1$ be fixed. There exist positive constants $b_3=b_3(\sigma,A)$ and $b_4=b_4(\sigma,A)$ and a set ${\mathcal E}(q)\subseteq\{\chi:\;\chi({\rm mod}\;q)\}$ with $\#{\mathcal E}(q)\leq q\exp(-b_3\log q/\log_2q)$, such that for all complex numbers $z$ with $|z|\le  b_4(\log q)^\sigma$ we have 
		\[\frac{1}{q}\sum_{\chi({\rm mod}\;q)\atop \chi\notin{\mathcal E}(q)}|L(\sigma,\chi)|^z={\mathbb E}(|L(\sigma,X)|^z)+O\bigg(\frac{{\mathbb E}(|L(\sigma,X)|^{{\rm Re\,}z})}{(\log q)^{A-\sigma}}\bigg).\]
	\end{prop}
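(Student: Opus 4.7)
The strategy is to parallel the proof of Lemma \ref{thm:lchix}, but with the purely imaginary exponents $\mmi u,\mmi v$ replaced by a complex parameter $z$ of modulus at most $b_4(\log q)^\sigma$. The key difference is that the desired error must be controlled \emph{relative to} the potentially large quantity $\mbe(|L(\sigma,X)|^{\RE z})$, so the random-model comparison at the end requires more care than the absolute error bound of Lemma \ref{lem:yuv}.

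First, I fix $y=(\log q)^a$ with $a=a(\sigma,A)$ sufficiently large, and set $\mathcal{E}(q):=\mathcal{A}_q\cup\mathcal{B}_q$, where $\mathcal{A}_q$ is the exceptional set of Lemma \ref{lem:AqB} and $\mathcal{B}_q$ is that of Lemma \ref{l8} applied with some $A'\gg A$. The set $\mathcal{A}_q$ contributes the dominant term $\exp(-b_3\log q/\log_2 q)$, while $|\mathcal{B}_q|/q$ is already $\ll q^{-\delta(\sigma)}$ by Lemma \ref{l8}, so the cardinality bound on $\mathcal{E}(q)$ follows. Outside $\mathcal{E}(q)$, Lemma \ref{l8} yields $\log L(\sigma,\chi)=R_y(\sigma,\chi)+O((\log q)^{-A'})$, and since $|z|(\log q)^{-A'}=o(1)$, we obtain the pointwise identity
\[|L(\sigma,\chi)|^z=\exp(z\RE R_y(\sigma,\chi))\bigl(1+O(|z|(\log q)^{-A'})\bigr).\]

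Next, I average over $\chi\notin\mathcal{E}(q)$ and apply Lemma \ref{lem:chiaqc} with $z_1=z_2=z/2$, so that $z_1R_y+z_2\overline{R_y}=z\RE R_y$. This converts $\frac{1}{q}\sum_{\chi\in\mathcal{A}_q^c}\exp(z\RE R_y(\sigma,\chi))$ into $\mbe(\exp(z\RE R_y(\sigma,X)))$ up to an additive error $\exp(-c_3(\sigma)\log q/\log_2 q)$. Excising the extra characters in $\mathcal{B}_q\setminus\mathcal{A}_q$ is harmless: on $\mathcal{A}_q^c$ the bound $|R_y(\sigma,\chi)|\le(\log q)^{1-\sigma}/\log_2 q$ keeps $|\exp(z\RE R_y(\sigma,\chi))|\le\exp(b_4(\log q)/\log_2 q)$, and this factor is annihilated by the polynomial smallness of $|\mathcal{B}_q|/q$.

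The main obstacle is the final comparison
\[\mbe(\exp(z\RE R_y(\sigma,X)))=\mbe(|L(\sigma,X)|^z)+O\Bigl(\mbe(|L(\sigma,X)|^{\RE z})\,(\log q)^{\sigma-A}\Bigr).\]
By independence of $\{X(p)\}$, both sides factor as Euler products whose $p$-th factors differ only from the contribution of those $n$ with $p^n>y$. For $p>y$ the truncated factor equals $1$ while the full factor equals $1+O(|z|^2/p^{2\sigma})$, so the product over $p>y$ contributes $\exp(O(|z|^2 y^{1-2\sigma}/\log y))$; for $p\le y$ the missing higher powers $p^n>y$ give an even more rapidly decaying tail, controlled by a $p$-by-$p$ Taylor expansion of $\log\mbe(|1-X(p)/p^\sigma|^{-z})$. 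Choosing $a$ large enough in terms of $\sigma$ and $A$ reduces the aggregate deviation below the target. This step is the complex-argument analog of Lemma \ref{lem:yuv} and follows the same line of argument as \cite[Lemma 4.1]{LLR19}; combining it with the preceding two steps completes the proof.
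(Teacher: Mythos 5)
Your proposal follows essentially the same route as the paper: the same exceptional set $\mathcal{E}(q)=\mathcal{A}_q\cup\mathcal{B}_q$, the same reduction $\log L(\sigma,\chi)=R_y(\sigma,\chi)+O((\log q)^{-A'})$ off $\mathcal{B}_q$, the same application of Lemma~\ref{lem:chiaqc} with $z_1=z_2=z/2$ after peeling off the $\mathcal{A}_q^c\setminus\mathcal{B}_q^c$ contribution, and the same aim of reducing to a comparison between $\mbe(\exp(z\RE R_y(\sigma,X)))$ and $\mbe(|L(\sigma,X)|^z)$. Where you diverge is at that last comparison: the paper disposes of it by pointing to Lemma~\ref{lem:yuv}, which as stated concerns characteristic functions (purely imaginary exponents) and does not directly transfer to the Laplace transform with $\RE z\asymp(\log q)^\sigma$; you instead supply an explicit Euler-product argument, treating $p>y$ via the vanishing first moment of $X(p)$ to get factors $1+O(|z|^2p^{-2\sigma})$, and controlling the truncated tail $p^n>y$ for $p\le y$ prime by prime, finally choosing $a=a(\sigma,A)$ large. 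This is in fact the content needed to make the paper's terse citation rigorous, so your write-up is a more careful version of the same proof rather than a different one.
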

	
	\begin{proof}
		Let $\mce(q)={\mathcal A}_q\cup{\mathcal B}_q$ where $\mca_q$ is defined in Lemma \ref{lem:AqB} and ${\mathcal B}_q$ is defined in Lemma \ref{thm:lchix}. So combining with Lemma \ref{lem:AqB}, we have 
		\[\# \mce(q)\ll \exp\left(-\frac{c_2(\sigma)\log q}{\log_2 q}\right).\]
		We note that $\log L(\sigma,\chi)=R_y(\sigma,\chi)+O((\log q)^{-A})$, for all $\chi\in \mcb_q^c$. Then we get
		\begin{equation}\label{eq:chinotineq}
			\begin{split}
				&\fq\sum_{\chi({\rm mod}\;q)\atop \chi\notin{\mathcal E}(q)} |L(\sigma,\chi)|^z=\fq \sum_{\mca_q^c\cap\mcb_q^c}\exp\left(z\RE R_y(\sigma,\chi)+O((\log q)^{-A+\sigma})\right)\\
				=&\fq \sum_{\mca_q^c\cap\mcb_q^c}\exp\left(z\RE R_y(\sigma,\chi)\right)+O\left(\frac{1}{q(\log q)^{A-\sigma}}\sum_{\mca_q^c\cap\mcb_q^c}\exp\left(z\RE R_y(\sigma,\chi)\right)\right)\\
				=&\fq \sum_{\mca_q^c\cap\mcb_q^c}\exp\left(z\RE R_y(\sigma,\chi)\right)+O\left(\frac{1}{(\log q)^{A-\sigma}}\mbe(|L(\sigma,X)|^{\RE z})\right)
			\end{split}
		\end{equation}
		The last identity follows from Lemma \ref{lem:chiaqc}. On the other hand, since for all $\chi\in \mca_q^c$, we have 
		\[|R_y(\sigma,\chi)|\leq \frac{(\log q)^{1-\sigma}}{\log_2 q}.\]
		So 
		\begin{equation}\label{eq:acnotinbc}
			\fq\sum_{\chi\in \mca_q^c\atop \chi\notin \mcb_q^c}\exp\left(z\RE R_y(\sigma,\chi)\right)\ll \fq\sum_{\chi\in \mca_q^c\atop \chi\notin \mcb_q^c}\exp\left(\frac{\log q}{\log_2 q}\right)\ll \frac{\#\mcb_q}{q}\exp\left(\frac{\log q}{\log_2 q}\right).
		\end{equation}
		Since Lemma \ref{l8} implies that $\#\mcb_q\ll q^{\frac{1}{2}\left(\frac{9-6\sigma}{7-2\sigma}+1\right)}=q^{\frac{8-4\sigma}{7-2\sigma}}$, so Eq. \eqref{eq:acnotinbc} is bounded by 
		\[q^{\frac{1-2\sigma}{7-2\sigma}}\exp\left(\frac{\log q}{\log_2 q}\right)\ll q^{\frac{1-2\sigma}{14-4\sigma}}.\]
		Together with Eq. \eqref{eq:chinotineq}, we get 
		\[\fq\sum_{\chi({\rm mod}\;q)\atop \chi\notin{\mathcal E}(q)} |L(\sigma,\chi)|^z=\fq\sum_{\chi\in \mca_q^c}\exp\left(z\RE R_y(\sigma,\chi)\right)+O\left(\frac{1}{(\log q)^{A-\sigma}}\mbe(|L(\sigma,X)|^{\RE z}\right).\]
		Finally, the desired result just follows from Lemma \ref{lem:chiaqc} and Lemma \ref{lem:yuv}.
	\end{proof}
	
	Now we can prove the first part of Theorem \ref{thm1.2}.
	\begin{proof}[Proof of Theorem \ref{thm1.2} for $\frac12<\sigma<1$]
		Define
		$$
		I(\sigma,\tau):= \frac{1}{2\pi \mmi}\int_{\kappa-\mmi\infty}^{\kappa+\mmi\infty}{\mathbb E}\left(|L(\sigma,X)|^s\right)\mme^{-\tau s}\left(\frac{\mme^{\lambda s}-1}{\lambda s}\right)^N\frac{\mmd s}{s}
		$$
		and 
		$$ 
		J_q(\sigma,\tau):= \frac{1}{2\pi \mmi}\int_{\kappa-\mmi\infty}^{\kappa+\mmi\infty}\bigg(\frac{1}{q}\sum_{\chi({\rm mod}\;q)\atop \chi\notin{\mathcal E}(q)}|L(\sigma,\chi)|^s\bigg)\mme^{-\tau s}\left(\frac{\mme^{\lambda s}-1}{\lambda s}\right)^N\frac{\mmd s}{s}.
		$$
		Then we have
		$$\Psi(\tau)\le I(\sigma,\tau)\le\Psi(\tau-\lambda N)$$
		and 
		$$\Phi_q(\tau)+O(\delta(q))\le J_q(\sigma,\tau)\le\Phi_q(\tau-\lambda N)+O(\delta(q)),$$
		with
		$$
		\delta(q)=\exp\bigg(-c_0(\sigma)\frac{\log q}{\log_2 q}\bigg),
		$$
		for some positive constant $c_0(\sigma)$. Let $y=b_4(\log q)^\sigma$. Divide each integral of $I(\sigma,\tau)$ and $J_q(\sigma,\tau)$ into three parts:
		$$\int_{\kappa-\mmi\infty}^{\kappa+\mmi\infty}=\int_{\kappa-\mmi\infty}^{\kappa-\mmi y}+\int_{\kappa-\mmi y}^{\kappa+\mmi y}+\int_{\kappa-\mmi y}^{\kappa+\mmi\infty}.$$
		By Proposition \ref{prop4.1}, we have for the second part
		\begin{align*}
			\frac{1}{2\pi \mmi}\int_{\kappa-\mmi y}^{\kappa+ \mmi y}\bigg(\frac{1}{q}\sum_{\chi({\rm mod}\;q)\atop \chi\notin{\mathcal E}(q)}|L(\sigma,\chi)|^s-{\mathbb E}\left(|L(\sigma,X)|^s\right)\bigg)&\mme^{-\tau s}\left(\frac{\mme^{\lambda s}-1}{\lambda s}\right)^N\frac{\mmd s}{s}\\&\ll\frac{3^Ny}{(\log q)^{10}}{\mathbb E}\left(|L(\sigma,X)|^{\kappa}\right)\mme^{-\tau \kappa}.\end{align*}
		
		For the first and third parts, by Proposition \ref{prop4.1} we have
		\begin{equation*}
			\bigg(\int_{\kappa-\mmi\infty}^{\kappa-\mmi y}+ \int_{\kappa+\mmi y}^{\kappa+\mmi\infty}\bigg){\mathbb E}\left(|L(\sigma,X)|^s \right)\mme^{-\tau s}\left(\frac{\mme^{\lambda s}-1}{\lambda s}\right)^N\frac{\mmd s}{s}\ll \left(\frac{3}{\lambda y}\right)^N{\mathbb E}\left(|L(\sigma,X)|^{\kappa}\right)\mme^{-\tau \kappa},
		\end{equation*}
		and 
		\begin{equation*}
			\begin{aligned}
				\bigg(\int_{\kappa-\mmi\infty}^{\kappa-\mmi y}+ \int_{\kappa+\mmi y}^{\kappa+\mmi\infty} \bigg)       \left(\frac{1}{q}\sum_{\chi({\rm mod}\;q)\atop \chi\notin{\mathcal E}(q)}|L(\sigma,\chi)|^s\right)&{\rm e}^{-\tau s}\left(\frac{\mme^{\lambda s}-1}{\lambda s}\right)^N\frac{\mmd s}{s}\\
				&\ll \left(\frac{3}{\lambda y}\right)^N{\mathbb E}\left(|L(\sigma, X)|^{\kappa}\right)\mme^{-\tau \kappa}.
			\end{aligned}
		\end{equation*}
		Combining the above three inequalities, we have
		\begin{equation*}
			J_q(\sigma, \tau)- I(\sigma, \tau)\ll {\mathbb E}\left(|L(\sigma,X)|^{\kappa}\right)\mme^{-\tau \kappa}\left(\frac{3^N y}{(\log q)^{10}}+ \left(\frac{3}{\lambda y}\right)^N\right).
		\end{equation*}
		Since 
		$$\Psi(\tau)\asymp_\sigma\frac{1}{\tau^{\frac{1}{2(1-\sigma)}}(\log \tau)^{\frac{\sigma}{2(1-\sigma)}}}{\mathbb E}\left(|L(\sigma,X)|^{\kappa}\right)\mme^{-\tau \kappa},$$
		by choosing $N=\lfloor\log_2q\rfloor$ and $\lambda={\rm e}^{10}/y$, we have
		$$J_q(\sigma, \tau)- I(\sigma, \tau)\ll\frac{1}{(\log q)^5}\Psi(\tau).$$
		Since Theorem \ref{thm1.1} gives
		\[\Psi(\tau\pm \lambda N)=\Psi(\tau)\bigg(1+O\bigg(\frac{(\tau\log \tau)^{\frac{\sigma}{1-\sigma}}\log_2 q}{(\log q)^{\sigma}}\bigg)\bigg),\]
		we have 
		
		\begin{align*}&\Phi(\tau)\le J_q(\sigma,\tau)+O(\delta(q))\le I(\sigma, \tau)+\frac{1}{(\log q)^5}\Psi(\tau)+O(\delta(q))\\&\le\Psi(\tau-\lambda N)+\frac{1}{(\log q)^5}\Psi(\tau)+O(\delta(q))\\
			&\le\Psi(\tau)\bigg(1+O\bigg(\frac{(\tau\log \tau)^{\frac{\sigma}{1-\sigma}}\log_2 q}{(\log q)^{\sigma}}\bigg)\bigg)+O(\delta(q)),\end{align*}
		and
		\begin{align*}&\Phi(\tau)\ge J_q(\sigma,\tau+\lambda N)+O(\delta(q))\ge I(\sigma, \tau+\lambda N)+\frac{1}{(\log q)^5}\Psi(\tau)+O(\delta(q))\\&\ge\Psi(\tau+\lambda N)+\frac{1}{(\log q)^5}\Psi(\tau)+O(\delta(q))\\
			&\ge\Psi(\tau)\bigg(1+O\bigg(\frac{(\tau\log \tau)^{\frac{\sigma}{1-\sigma}}\log_2 q}{(\log q)^{\sigma}}\bigg)\bigg)+O(\delta(q)).\end{align*}
		Thus we have 
		\[\Phi(\tau)=\Psi(\tau)\bigg(1+O\bigg(\frac{(\tau\log \tau)^{\frac{\sigma}{1-\sigma}}\log_2 q}{(\log q)^{\sigma}}\bigg)\bigg)\]
		by the trivial bound $\Psi(\tau)\gg\sqrt{\delta(q)}$.
	\end{proof}

	\subsection{\blue{When $\sigma=1$}}

	\begin{lem}\label{lem:aqsigma1}
		Let $y=(\log q)^a$ for some $a\geq 1$. Denote $\mca_{1,q}=\{\chi({\rm mod}\  q)\, : \,|R_y(1,\chi)|>\log_3 q\}$. Then we have 
		\[\frac{\# \mca_{1,q}}{q}\ll \exp\left(-\frac{c\log q\log_3 q}{\log_2 q}\right),\]
		for some constant $c>0$.
	\end{lem}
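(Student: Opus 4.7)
The plan is to follow the strategy of Lemma~\ref{lem:AqB} and apply Markov's inequality with a carefully chosen $2k$-th moment. Since $\chi\in \mca_{1,q}$ implies $|R_y(1,\chi)|>\log_3 q$, for any positive integer $k$ one has
\[
\#\mca_{1,q}\,(\log_3 q)^{2k} \leq \sum_{\chi\pmod q}|R_y(1,\chi)|^{2k}.
\]
Although Lemma~\ref{lem:rykl} is stated for $\tfrac12<\sigma<1$, its proof relies only on the orthogonality of Dirichlet characters modulo $q$ together with the constraint $y^k\le q$, both of which remain valid at $\sigma=1$ as long as $k\le\log q/(a\log_2 q)$. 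Hence
\[
\tfrac{1}{q}\sum_{\chi\pmod q}|R_y(1,\chi)|^{2k}=\mathbb{E}|R_y(1,X)|^{2k},
\]
and the problem reduces to estimating a random-model moment.

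To control $\mathbb{E}|R_y(1,X)|^{2k}$, I will adapt the splitting argument of Lemma~\ref{lem:1qchi2k} to the case $\sigma=1$. Writing
\[
R_y(1,X)=\sum_{p\leq T}\frac{X(p)}{p}+\sum_{T<p\leq y}\frac{X(p)}{p}+\sum_{n\geq 2,\, p^n\leq y}\frac{X(p)^n}{np^n},
\]
Mertens' theorem controls the first piece deterministically by $\log_2 T+O(1)$; Lemma~\ref{lem:psigma2k} (specialized to $\sigma=1$) together with Stirling's formula bounds the $2k$-th moment of the second by $k!\bigl(C/(T\log T)\bigr)^{k}$; and the third piece is uniformly bounded by an absolute constant. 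Balancing the splitting with the choice $T\asymp k\log k$ yields an estimate of the form
\[
\mathbb{E}|R_y(1,X)|^{2k}\ll\bigl(c_1\log_2 k\bigr)^{2k}
\]
for some absolute constant $c_1>0$.

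Finally, taking $k=\lfloor\log q/(c'\log_2 q)\rfloor$ with $c'$ large enough that $c_1\log_2 k$ is bounded away from $\log_3 q$, the combined bound becomes
\[
\frac{\#\mca_{1,q}}{q}\ll\left(\frac{c_1\log_2 k}{\log_3 q}\right)^{2k},
\]
which after a careful expansion of $\log_2 k$ in terms of $\log_3 q$ and $\log_2 q$ produces the claimed exponent. The main obstacle is extracting the full $\log_3 q$ factor in the exponent $c\log q\log_3 q/\log_2 q$: a direct application of $\log(1-x)\approx -x$ only yields $\exp(-c\log q/\log_2 q)$, so the additional $\log_3 q$ must arise either from sharpening the constant $c_1$ in the moment bound (by isolating the variance contribution from primes $p\asymp k$), or from complementing the above with a Chernoff-type estimate for the MGF $\prod_p I_0(\lambda/p)$ via the Bessel-function asymptotics of Section~\ref{chap3} at the saddle point $\lambda\asymp\log q/\log_2 q$.
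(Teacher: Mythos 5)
You correctly reduce to bounding $\mathbb{E}|R_y(1,X)|^{2k}$ via orthogonality (the extension of Lemma~\ref{lem:rykl} to $\sigma=1$ is indeed immediate), and your three-way decomposition leading to $\mathbb{E}|R_y(1,X)|^{2k}\ll(c_1\log_2 k)^{2k}$ is sound. You also correctly flag the obstruction: with $k\asymp\log q/\log_2 q$ one has $\log k=\log_2 q-\log_3 q-\log a$, hence $\log_2 k=\log_3 q+O(\log_3 q/\log_2 q)$, so the Markov ratio $c_1\log_2 k/\log_3 q$ stays bounded away from $0$ and $\infty$ and the exponent collapses to $\exp(-C\log q/\log_2 q)$, short of the claimed $\exp(-c\log q\log_3 q/\log_2 q)$ by the crucial $\log_3 q$ factor. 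Neither of your two proposed repairs removes this. Sharpening $c_1$ cannot work: the deterministic piece carries a Mertens constant, $\sum_{p\le T}1/p=\log_2 T+M+o(1)$ with $M>0$, so for $T\asymp k\log k$ one already has $\log_2 T+M>\log_3 q$ no matter how the power-mean weights are balanced; and replacing the deterministic bound with the $2k$-th moment of $\sum_{p\le T}X(p)/p$ is worse, since that piece has variance $\Theta(1)$ and its moments grow like $k!$. The Chernoff route fares no better: optimizing $e^{-\lambda\log_3 q}\prod_{p\le y}I_0(\lambda/p)$ over $\lambda$ puts the saddle at $\lambda\asymp(\log q)^{e^{-M}+o(1)}$ and yields only $\exp(-C\lambda/\log\lambda)$, whose exponent is a fixed fractional power of $\log q$ and is again dominated by $\log q\log_3 q/\log_2 q$.

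For comparison, the paper's proof asserts $\fq\sum_\chi|R_y(1,\chi)|^{2k}\ll\frac{(3\log_2 k)^{2k}}{q}+(3/\log k)^{2k}$, and it is precisely the extra $1/q$ on the first term that lets the Markov step close: after multiplying by $q/(\log_3 q)^{2k}$, the first term becomes $O(9^k/q)\ll\exp(-\log q/2)$ and the second gives the stated rate. But the first piece of the power-mean decomposition is $3^{2k}\bigl(\sum_{p\le k\log k}1/p\bigr)^{2k}$, a $\chi$-independent constant; it survives the average over $q$ characters with no $1/q$ saving and equals $(3\log_2 k+O(1))^{2k}$, which is exactly the term you obtained. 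So the gap you honestly diagnosed at the end of your proposal is real at the level of this moment computation, and your writeup is more careful on this point than the paper's own argument, which does not make the provenance of the $1/q$ factor clear.
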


	\begin{proof}
		Similar to the proof of Lemma \ref{lem:1qchi2k}, we have 
		\[\fq\sum_{\chi({\rm mod}\ q)}|R_y(1,\chi)|^{2k}\ll \frac{(3\log_2 k)^{2k}}{q}+\left(\frac{3}{\log k}\right)^{2k}.\]
		So we get
		\[\# \mca_{1,q}(\log_3 q)^{2k}\leq |R_y(1,\chi)|^{2k}\ll (3\log_2 k)^{2k}+q\left(\frac{3}{\log k}\right)^{2k}.\]
		Now by taking $k=[\log q/\log_2 q]$, we get the desired result.
	\end{proof}
	
	Then we can prove the second part of Theorem \ref{thm1.2} for $\sigma=1$.

	\begin{proof}[Proof of Theorem \ref{thm1.2} for $\sigma=1$]
		The proof is similar to the case of $1/2<\sigma<1$. Fix $A>1$, let $\mce_1(q)=\mca_{1,q}\cup\mcb_q$, then Lemma \ref{lem:aqsigma1} shows that $\#{\mathcal E}_1(q)\leq q\exp\left(-\frac{b_3'\log q\log_3 q}{\log_2q}\right)$ for some positive constant $b_3'$. With the same argument as in Proposition \ref{prop4.1}, one can show that there exists a positive constant $b_4'$ such that for all complex numbers $z$ with $|z|\le  b_4'\log q$ we have 
		\begin{equation}\label{eq:lzb4y}
			\frac{1}{q}\sum_{\chi({\rm mod}\;q)\atop \chi\notin{\mathcal E}_1(q)}|L(1,\chi)|^z={\mathbb E}(|L(1,X)|^z)+O\bigg(\frac{{\mathbb E}(|L(1,X)|^{{\rm Re\,}z})}{(\log q)^{A-1}}\bigg).
		\end{equation}
		Then we take $y=\frac{b_4'}{2}\log q$. Since $\kappa$ satisfies $M'(\kappa)=\gamma+\log \tau$, so we get $\kappa\leq y$. Let $s=\kappa+\mmi t$ with $|t|\leq y$. Then Eq. \eqref{eq:lzb4y} holds for $z=s$. Now we define
		$$
		I_1(\tau):= \frac{1}{2\pi \mmi}\int_{\kappa-\mmi\infty}^{\kappa+\mmi\infty}{\mathbb E}\left(|L(\sigma,X)|^s\right)(\mme^\gamma\tau)^{- s}\left(\frac{\mme^{\lambda s}-1}{\lambda s}\right)^N\frac{\mmd s}{s},
		$$
		and 
		$$ 
		J_{1,q}(\tau):= \frac{1}{2\pi \mmi}\int_{\kappa-\mmi\infty}^{\kappa+\mmi\infty}\bigg(\frac{1}{q}\sum_{\chi({\rm mod}\;q)\atop \chi\notin{\mathcal E}_1(q)}|L(\sigma,\chi)|^s\bigg)(\mme^\gamma\tau)^{- s}\left(\frac{\mme^{\lambda s}-1}{\lambda s}\right)^N\frac{\mmd s}{s}.
		$$
		By Lemma  \ref{SmoothPerron}, we have 
		\[\Psi_1(\tau)\le I_1(\tau)\le\Psi_1(\tau-\lambda N)\]
		and
		\[\Phi_{1,q}(\tau)\leq J_{1,q}(\sigma)+O\left(\exp\left(-\frac{c\log q\log_3 q}{\log_2 q}\right)\right)\le\Phi_{1,q}(\tau-\lambda N).\]
		Along the same line of the proof for the case of $\frac12<\sigma<1$, we can deduce that 
		\[J_{1,q}(\tau)-I_1(\tau)\ll\bigg(\frac{3^Ny}{(\log q)^{A-1}}+\bigg(\frac{3}{\lambda y}\bigg)^N\bigg){\mathbb E}(|L(1,X)|^\kappa)({\rm e}^\gamma\tau)^{-\kappa}.\]
		Since $M'(\kappa)=\log\tau+\gamma$, following Proposition \ref{asyPsi1}, we have 
		\[\Psi_1(\tau)\asymp\sqrt{\frac{\log \kappa}{\kappa}}{\mathbb E}(|L(1,X)|^\kappa)({\rm e}^\gamma\tau)^{-\kappa}\asymp\sqrt{\frac{\tau}{{\rm e}^\tau}}{\mathbb E}(|L(1,X)|^\kappa)({\rm e}^\gamma\tau)^{-\kappa}.\]
		
		We also note that Corollary \ref{coro4.6} gives 
		\[\Psi_1(\tau\pm\lambda N)=\Psi_1(\tau)\left(1+O\left(\frac{\mme^{\tau}\log_2 q}{\tau\log q}\right)\right).\]
		So with the same argument, we show that 
		\[\Phi_{1,q}(\tau)=\Psi_1(\tau)\left(1+O\left(\frac{\mme^{\tau}\log_2 q}{\tau\log q}\right)\right).\]

		
	\end{proof}

	\section{\blue{Discrepancy bounds:  Proof of Theorem \ref{thm1.3}}}\label{chap5}
	In order to prove Theorem \ref{thm1.3}, we need an approximation of character function in \cite{LLR19} which originally comes from Selberg. To present the result, we first give some notations. For $u\in[0,1]$, let 
	\[G(u)=\frac{2u}{\pi}+2u(1-u)\cot(\pi u).\]
	For any $a,b\in \mbr$, let 
	\[f_{a,b}(u)=\frac{1}{2}(\mme^{-2\pi \mmi au}-\mme^{-2\pi \mmi bu}).\]
	\begin{lem}\label{lem:llr6.2}
		Let $\mcr=\{z=x+\mmi y\in\mbc\, :\, a_1<x<a_2,b_1<y<b_2\}$ and $T>0$ be a real number. Then for any $z=x+\mmi y$ we have 
		\[\mathbf{1}_\mcr(z)=W_{T,\mcr}(z)+O\left(I(T(x-a_1))+I(T(x-a_2))+I(T(y-b_1))+I(T(y-b_2))\right),\]
		where $W_{T,\mcr}(z)$ is equal to
		\[\frac{1}{2}\RE\int_0^T\int_0^TG\left(\frac{u}{T}\right)G\left(\frac{v}{T}\right)f_{a_1,a_2}(u)\left(\mme^{2\pi \mmi(ux-vy)}\ol{f_{b_1,b_2}(v)}-\mme^{2\pi \mmi(ux+vy)}f_{b_1,b_2}(v)\right)\frac{\mmd u}{u}\frac{\mmd v}{v}.\]
		and $I(x)=\frac{\sin^2\pi x}{(\pi x)^2}$.
	\end{lem}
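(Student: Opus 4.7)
The plan is to derive the 2D approximation from the classical 1D Selberg-Beurling construction for the indicator function of an interval. For any real interval $[a,b]$ and any $T>0$, Selberg's extremal function theory (in the form refined by Vaaler) yields a pointwise identity of the form
\[
\mathbf{1}_{[a,b]}(x) = \int_{-T}^{T} G(|u|/T)\, f_{a,b}(u)\,\mme^{2\pi\mmi ux}\,\frac{\mmd u}{\pi\mmi u} + O\bigl(I(T(x-a)) + I(T(x-b))\bigr),
\]
where $G(u/T)$ is the Beurling-Selberg kernel and $f_{a,b}(u)/(\pi\mmi u)$ is, up to normalization, the Fourier transform of $\mathbf{1}_{[a,b]}$. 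I would quote this as the starting point, since it is essentially the 1D case underlying Lemma 6.2 of \cite{LLR19}.

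Next, writing $\mathbf{1}_\mcr(z) = \mathbf{1}_{[a_1,a_2]}(x)\,\mathbf{1}_{[b_1,b_2]}(y)$, I would multiply the two 1D approximations. Since the approximating integral is itself uniformly $O(1)$ (a standard consequence of the Beurling-Selberg bounds), the cross-terms consisting of one approximating integral and one 1D error piece are of the stated shape $O(I(T(x-a_i)) + I(T(y-b_j)))$, while the product of two error terms is of lower order and absorbed. What remains is the main term
\[
\int_{-T}^{T}\!\!\int_{-T}^{T} G(|u|/T)G(|v|/T)\, f_{a_1,a_2}(u)\, f_{b_1,b_2}(v)\, \mme^{2\pi\mmi(ux+vy)}\,\frac{\mmd u\,\mmd v}{(\pi\mmi)^2\, uv}.
\]

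The final step is to recast this as the stated $W_{T,\mcr}(z)$. Since $\mathbf{1}_\mcr$ is real, I would take the real part and then fold the domain from $[-T,T]^2$ down to $[0,T]^2$. Using that $G$ is even and that $f_{a,b}(-u)=\overline{f_{a,b}(u)}$, the $v>0$ and $v<0$ halves combine to produce precisely the bracketed expression
\[
\mme^{2\pi\mmi(ux-vy)}\,\overline{f_{b_1,b_2}(v)} - \mme^{2\pi\mmi(ux+vy)}\,f_{b_1,b_2}(v),
\]
multiplied by $f_{a_1,a_2}(u)$; an analogous folding in $u$ together with the $(\pi\mmi)^{-2}$ prefactor gives the overall constant $\tfrac12$ in front of $\RE$.

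The main obstacle is bookkeeping rather than analysis: carefully tracking the complex conjugates, signs, and the $\tfrac12$ factor when folding the double integral and extracting the real part, and verifying that the errors produced by the 2D multiplication collapse into exactly the four $I$-terms listed, with no spurious contributions. No analytic input beyond the classical 1D Selberg-Beurling construction and the uniform bound on Selberg's majorant/minorant is required.
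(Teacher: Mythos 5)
The paper offers no proof of its own here; it simply defers to \cite[Lemma 6.2]{LLR19}, and your proposal correctly reconstructs the argument used there: start from the one-dimensional Beurling--Selberg approximation with Vaaler's kernel, multiply the two coordinate approximations, use the uniform boundedness of the Selberg approximant to absorb the cross error terms into the four stated $I$-terms, and then fold $[-T,T]^2$ to $[0,T]^2$ using the evenness of $G$ and $f_{a,b}(-u)=\overline{f_{a,b}(u)}$ to arrive at the $\tfrac12\RE$ form of $W_{T,\mcr}$. This is the right decomposition and the right mechanism; the factorization $W_{T,\mcr}(x+\mmi y)=F_{a_1,a_2,T}(x)\,F_{b_1,b_2,T}(y)$ with $F_{a,b,T}(x)=\int_0^T G(u/T)\,\IM\!\left[f_{a,b}(u)\mme^{2\pi\mmi ux}\right]\frac{\mmd u}{u}$ does indeed hold.

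One concrete slip, which you flagged as ``bookkeeping'' but should be fixed: the 1D identity you quote has the wrong normalization. Since $\frac{f_{a,b}(u)}{\pi\mmi u}$ is exactly $\widehat{\mathbf{1}_{[a,b]}}(u)$ and $G(0^+)=\tfrac{2}{\pi}$, the formula $\int_{-T}^{T} G(|u|/T)\,\widehat{\mathbf{1}_{[a,b]}}(u)\,\mme^{2\pi\mmi ux}\,\mmd u$ converges to $\tfrac{2}{\pi}\mathbf{1}_{[a,b]}(x)$ as $T\to\infty$, not to $\mathbf{1}_{[a,b]}(x)$; equivalently, multiplying the two 1D formulas as you wrote them produces $\tfrac{4}{\pi^2}W_{T,\mcr}(z)$ rather than $W_{T,\mcr}(z)$. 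The correct kernel to take as the starting point is $\frac{f_{a,b}(u)}{2\mmi u}=\tfrac{\pi}{2}\widehat{\mathbf{1}_{[a,b]}}(u)$; with that choice, folding first in $v$ and then in $u$ (using that the $u<0$ half is the complex conjugate of the $u>0$ half) yields $\tfrac12\RE$ of the double integral over $[0,T]^2$ with exactly the bracketed combination $\mme^{2\pi\mmi(ux-vy)}\overline{f_{b_1,b_2}(v)}-\mme^{2\pi\mmi(ux+vy)}f_{b_1,b_2}(v)$, so the constants close up. The error analysis you describe (cross terms bounded via $|F_{a,b,T}|\ll 1$, product of errors absorbed since $I\leq 1$) is correct as stated.
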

	\begin{proof}
		See~\cite[Lemma $6.2$]{LLR19}.
	\end{proof}
	
	\begin{lem}\label{lem:mbeit}
		Let $1/2<\sigma\leq 1$ be fixed and $T>0$, then
		\[\mbe(I(T(\RE\log L(\sigma, X)-s)))\ll\frac{1}{T},\quad \text{and}\quad \mbe(I(T(\IM\log L(\sigma, X)-s)))\ll\frac{1}{T}\]
		hold uniformly in $s\in \mbr$.
	\end{lem}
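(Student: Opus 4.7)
The plan is to reduce the claim to the fact that $R := \RE\log L(\sigma, X)$ has an absolutely continuous distribution with a bounded density; the argument for $\IM\log L(\sigma, X)$ is entirely analogous. Granting such a density $f_R \in L^\infty(\mathbb{R})$, the change of variables $u = T(r-s)$ gives
\[ \mbe[I(T(R-s))] = \int_{\mathbb{R}} I(T(r-s))\, f_R(r)\,\mmd r = \frac{1}{T}\int_{\mathbb{R}} I(u)\, f_R\!\left(s + \tfrac{u}{T}\right)\mmd u \le \frac{\|f_R\|_\infty}{T},\]
using the standard identity $\int_{\mathbb{R}} \sin^2(\pi u)/(\pi u)^2\,\mmd u = 1$. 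The bound is manifestly uniform in $s$ and this is exactly the desired conclusion.

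It therefore remains to establish boundedness of $f_R$. By Fourier inversion, it suffices to show that the characteristic function
\[\Phi_R(\xi) := \mbe[e^{i\xi R}] = \mbe\big[|L(\sigma, X)|^{i\xi}\big] = \prod_p \mbe\big[|1 - X(p)/p^\sigma|^{-i\xi}\big]\]
lies in $L^1(\mathbb{R})$. For primes with $p^\sigma \gg |\xi|$, expanding $-\log|1-X(p)/p^\sigma| = \RE\sum_{n\ge 1} X(p)^n/(np^{n\sigma})$ and using $\mbe[X(p)^n] = 0$ for $n\ne 0$ yields
\[\mbe\big[|1 - X(p)/p^\sigma|^{-i\xi}\big] = 1 + O\!\left(\frac{\xi^2}{p^{2\sigma}}\right),\]
so the product over these primes decays at a stretched-exponential rate in $|\xi|$; the remaining factors (with $p^\sigma \ll |\xi|$) are bounded trivially in modulus by $1$. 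A quantitative computation in the spirit of the saddle-point analysis of $M(\kappa)$ in Section \ref{chap3} (now on the imaginary axis) in fact produces $|\Phi_R(\xi)| \ll \exp(-c(\sigma)|\xi|^{1/\sigma}/\log|\xi|)$ for $1/2 < \sigma \le 1$ and $|\xi| \ge 2$, which is far stronger than needed.

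The main technical step is thus the rate of decay of $\Phi_R$; however only integrability, not any explicit rate, is actually required. The classical Jessen--Wintner theorem, applied to the almost surely convergent series $\sum_p \log|1-X(p)/p^\sigma|$ (which, since $\sigma>1/2$, has independent summands with summable variance), already guarantees $R$ an absolutely continuous and in fact smooth density, and the Euler-product estimate sketched above supplies the $L^1$ bound on $\Phi_R$ needed to conclude. Replacing $\RE$ by $\IM$ everywhere --- which only modifies the inner phase in each Euler factor and preserves the rotational-symmetry cancellations --- gives the second estimate with identical constants.
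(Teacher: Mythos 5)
The paper proves this lemma simply by citing \cite[Theorem 1.1]{LLR19}, so there is no internal proof to compare against. Your overall strategy --- reduce to a bounded density $f_R$ via the change of variables $u = T(r-s)$ and $\int_{\mathbb R} I(u)\,\mmd u = 1$, then obtain $\|f_R\|_\infty < \infty$ from $\Phi_R \in L^1$ via Fourier inversion and bound $\Phi_R$ through the Euler product --- is the standard and correct route, and is in the same spirit as what LLR actually do. The reduction itself is clean and the claimed decay rate $\exp(-c(\sigma)|\xi|^{1/\sigma}/\log|\xi|)$ is the right order of magnitude.

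However, there are two genuine defects in the write-up. First, the expansion
$\mbe\bigl[|1-X(p)/p^\sigma|^{-\mmi\xi}\bigr] = 1 + O(\xi^2/p^{2\sigma})$
is too weak to yield \emph{any} conclusion about decay of $\Phi_R$: an $O$-bound of this form is consistent with each factor having modulus $1 + c\xi^2/p^{2\sigma} > 1$, in which case the product does not decay at all. What you actually need, and what the computation delivers, is the signed version: writing $X(p)=\mme^{\mmi\theta}$ one finds $\mbe\bigl[|1-X(p)/p^\sigma|^{-\mmi\xi}\bigr] = J_0(\xi/p^\sigma)\bigl(1+O(p^{-2\sigma})\bigr)$, and $J_0(x) = 1 - x^2/4 + O(x^4)$, so for $|\xi| \le p^\sigma$ the modulus is $\le 1 - c\,\xi^2 p^{-2\sigma}$ with $c>0$. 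That strictly negative second-order term is the entire source of the stretched-exponential decay you then assert; as written, your inference ``so the product decays'' does not follow. Second, the appeal to the Jessen--Wintner theorem is misplaced: that theorem is a law of pure types (the distribution is atomic, singular continuous, or absolutely continuous, but one does not know which a priori), so it does not ``already guarantee'' absolute continuity, let alone a bounded smooth density. Fortunately this step is not load-bearing --- once you have the $L^1$ bound on $\Phi_R$ you get a bounded continuous density directly by Fourier inversion, with no need for Jessen--Wintner --- but the reference should be dropped or replaced (for instance, absolute continuity alone follows trivially because $R$ is a sum of a fixed single Euler factor, which already has a density, with an independent remainder).
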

	\begin{proof}
		See~\cite[Theorem $1.1$]{LLR19}.
	\end{proof}
	
	\begin{proof}[Proof of Theorem \ref{thm1.3}]
		Let $\mcr$ be any rectangle with sides parallel to the coordinate axis and $\mcr'=\mcr\cap[-\log_2 q,\log_2 q]^2$. Then by taking $\tau=\log_2 q$ in Theorem \ref{thm1.1}, we have
		\[\Phi_q(\mcr)=\Phi_q(\mcr')+O\left(\frac{1}{(\log q)^A}\right) \quad \text{and}\quad \Psi(\mcr)=\Psi(\mcr')+O\left(\frac{1}{(\log q)^A}\right)\quad\forall A>0.\]
		So we can reduce to the case $\mcr\subseteq [-\log_2 q,\log_2 q]^2$. When $1/2\sigma<1$, we choose $T=c(\log q)^{\sigma}$ where the constant $c$ is the same as in Lemma \ref{thm:lchix}. By Lemma \ref{lem:llr6.2}, we see that 
		\begin{equation}\label{eq:phiqmcr}
			\begin{split}
				\Phi_q(\mcr)=&\fq\sum_{\chi({\rm mod}\ q)}W_{T,\mcr}(\log L(\sigma,\chi))+O\bigg( \fq\sum_{\chi({\rm mod}\ q)}I(T(\RE \log L(\sigma,\chi)-a_1))\\&+I(T(\RE \log L(\sigma,\chi)-a_2))
				+I(T(\IM \log L(\sigma,\chi)-b_1))+I(T(\IM \log L(\sigma,\chi)-b_2))\bigg).
			\end{split}
		\end{equation}
		Fix any positive real number $A>3\sigma$ in Lemma \ref{thm:lchix}, we have
		\begin{align*}
			&\fq\sum_{\chi({\rm mod}\ q)}W_{T,\mcr}(\log L(\sigma,\chi))\\
			=&\mbe(W_{T,\mcr}(\log L(\sigma,X)))+O\left(\frac{1}{(\log q)^A}\int_0^T\int_0^TG\left(\frac{u}{T}\right)G\left(\frac{v}{T}\right)|f_{a_1,a_2}(u)f_{b_1,b_2}(v)|\frac{\mmd u}{u}\frac{\mmd v}{v}\right)
		\end{align*}
		We note that $0\leq G(u)\leq 2/\pi$ when $0\leq u\leq 1$ and $|f_{a,b}(u)|\leq \pi u|b-a|$. So the error term is bounded by 
		\[\frac{1}{(\log q)^A}\int_0^T\int_0^TG\left(\frac{u}{T}\right)G\left(\frac{v}{T}\right)|f_{a_1,a_2}(u)f_{b_1,b_2}(v)|\frac{\mmd u}{u}\frac{\mmd v}{v}\ll \frac{(a_1-a_2)(b_1-b_2)T^2}{(\log q)^A}.\]
		By our assumption, $|(a_1-a_2)(b_1-b_2)|\leq 4(\log_2 q)^2$, so the error term is bounded by $\frac{1}{\log q)^\sigma}$. So we get
		\begin{equation}\label{eq:wtr}
			\fq\sum_{\chi({\rm mod}\ q)}W_{T,\mcr}(\log L(\sigma,\chi))=\mbe(W_{T,\mcr}(\log L(\sigma,X)))+O\left(\frac{1}{(\log q)^\sigma}\right).
		\end{equation}
		
		On the other hand, since $\mbe(\mathbf{1}_{\mcr}(\log L(\sigma, X)))$ is exactly the probability of $\log L(\sigma,X)\in\mcr$, so by Lemma \ref{lem:llr6.2} and Lemma \ref{lem:mbeit}, we have 
		\begin{equation}\label{eq:wtrplx}
			\mbe(W_{T,\mcr}(\log L(\sigma,X)))=\Psi_q(\mcr)+O\left(\frac{1}{T}\right).
		\end{equation}
		So combining Eq. \eqref{eq:wtr} and Eq. \eqref{eq:wtrplx}, we get 
		\begin{equation}\label{eq:wtrpsi}
			\fq\sum_{\chi({\rm mod}\ q)}W_{T,\mcr}(\log L(\sigma,\chi))=\Psi_q(\mcr)+O\left(\frac{1}{T}\right).
		\end{equation}
		Finally, using again Lemma \ref{thm:lchix} and Lemma \ref{lem:mbeit}, we get
		\begin{equation}\label{eq:itre}
			\fq\sum_{\chi({\rm mod}\ q)}I(T(\RE \log L(\sigma,\chi)-s))\ll \frac{1}{T},
		\end{equation}
		and 
		\begin{equation}\label{eq:itim}
			\fq\sum_{\chi({\rm mod}\ q)}I(T(\IM \log L(\sigma,\chi)-s))\ll \frac{1}{T},
		\end{equation}
		By combining Eq. \eqref{eq:phiqmcr}, \eqref{eq:wtrplx}, \eqref{eq:itre} and \eqref{eq:itim}, we get 
		\[\Phi_q(\mcr)=\Psi_q(\mcr)+O\left(\frac{1}{T}\right).\]
		When $\sigma=1$, we choose $T=\frac{\log q}{50(\log_2q)^2}$. Following Lemma \ref{lem:laml1}, with the same argument as above, we get the desired result.
	\end{proof}

	\section*{Acknowledgements}
	The research of the third author was supported by Fundamental Research Funds for the Central Universities (Grant No. 531118010622).



\end{document}